\let\NAT@parse\undefined
\theoremstyle{definition}
\newtheorem{assumption}{Assumption}
\newtheorem{definition}{Definition}
\newtheorem{remark}{Remark}
\newtheorem{problem}{Problem}
\theoremstyle{plain}
\newtheorem{proposition}{Proposition}
\newtheorem{theorem}{Theorem}
\newtheorem{lemma}{Lemma}
\newtheorem{corollary}{Corollary}
\newcommand{\eat}[1]{}
\newcommand{\Rmnum}[1]{\expandafter\@slowromancap\romannumeral #1@}
\begin{document}
%\runtitle{Insert a suggested running title}  % Running title for regular 
                                              % papers but only if the title  
                                              % is over 5 words. Running title 
                                              % is not shown in output.
              
%% use the tnoteref command within \title for footnotes;
%% use the tnotetext command for the associated footnote;
%% use the fnref command within \author or \address for footnotes;
%% use the fntext command for the associated footnote;
%% use the corref command within \author for corresponding author footnotes;
%% use the cortext command for the associated footnote;
%% use the ead command for the email address,
%% and the form \ead[url] for the home page:
%%

%% \title{Title\tnoteref{label1}}
%% \tnotetext[label1]{}
%% \author{Name\corref{cor1}\fnref{label2}}
%% \ead{email address}
%% \ead[url]{home page}
%% \fntext[label2]{}
%% \cortext[cor1]{}
%% \address{Address\fnref{label3}}
%% \fntext[label3]{}

%% use optional labels to link authors explicitly to addresses:
%% \author[label1,label2]{<author name>}
%% \address[label1]{<address>}
%% \address[label2]{<address>}          
%%%%%%%%%%%%%%%%%%%%%%% IEEE %%%%%%%%%%%%%%%%%%%%%%%%%%%%%%%%%%%%%%
\title{Time-Optimal Coordination for Connected and Automated Vehicles at Adjacent Intersections}

\author{Behdad Chalaki, {\itshape{Student Member, IEEE}} and Andreas A. Malikopoulos, {\itshape{Senior Member, IEEE}}
\thanks{This research was supported in part by ARPAE's NEXTCAR program under the award number DE-AR0000796 and by the Delaware Energy Institute (DEI).}%
\thanks{The authors are with the Department of Mechanical Engineering, University of Delaware, Newark, DE 19716 USA (email: \texttt{bchalaki@udel.edu; andreas@udel.edu).}} }

\markboth{IEEE Transactions on Intelligent Transportation Systems}{Chalaki and Malikopoulos : TITLE OF THE PAPER}

\maketitle

\begin{abstract}     
In this paper, we provide a hierarchical coordination framework for connected and automated vehicles (CAVs) at two adjacent intersections. This framework consists of an upper-level scheduling problem and a low-level optimal control problem. By partitioning the area around two adjacent intersections into different zones, we formulate a scheduling problem for each individual CAV aimed at minimizing its total travel time. For each CAV, the solution of the upper-level problem designates the arrival times at each zones on its path which becomes the inputs of the low-level problem. The solution of the low-level problem yields the optimal control input (acceleration/deceleration) of each CAV to exit the intersections at the time specified in the upper-level scheduling problem. We validate the performance of our proposed hierarchical framework through extensive numerical simulations and comparison with signalized intersections, centralized scheduling, and FIFO queuing policy. 
\end{abstract}
\indent

\begin{IEEEkeywords}
Connected and automated vehicles, signal-free intersections, decentralized optimal control, emerging mobility systems, path planning,  scheduling.
\end{IEEEkeywords}

\IEEEpeerreviewmaketitle

\section{Introduction}

\subsection{Motivation}

\IEEEPARstart{D}{ue} to the increasing population and travel demand, traffic congestion has become a significant concern in large metropolitan areas. By 2050, it is expected that \(68\%\) of the population will reside in urban areas; by 2030, there would be 41 Mega-cities (with more than 10M people or more) \cite{united20182018}. In $2017$, congestion in urban areas in the US caused drivers to spend an extra $8.8$ billion hours on the road and purchase an extra $3.3$ billion gallons of fuel resulting in $\$166$ billion cost \cite{Schrank2019}. 
In addition, about 35K people in the US lose their lives in traffic accidents each year \cite{USDOT2}.

\subsection{Related Work}
Equipped with computing capabilities and advanced communication technologies, connected and automated vehicles (CAVs) are expected to provide novel and innovative opportunities for users to make better operational decisions to improve both traffic throughput and passenger safety \cite{Klein2016a,Melo2017a,zhao2019enhanced}.
The potential improvement in safety and efficiency of the transportation network by employing a fleet of CAVs can be realized using two main approaches.
The first approach, which gained momentum in the 1980s and 1990s, uses CAVs to reduce vehicle gaps and form high-density platoons to cut congestion \cite{Shladover1991,Rajamani2000}. The second approach smooths the traffic flow to eliminate stop-and-go driving through optimal coordination through traffic bottlenecks \cite{stern2018dissipation}.

In the late $1960s$, Levine and Athans \cite{Levine1966,Athans1969} proposed an optimal control framework for coordinating two groups of vehicles at merging roadways. Since then, substantial research efforts have been reported in the literature proposing optimal coordination of CAVs in different traffic scenarios such as merging roadways, roundabouts, speed reduction zones, and urban intersections. Among different traffic scenarios, intersections are the most challenging from a safety perspective, as an average of one-quarter of traffic fatalities and roughly half of all traffic injuries are attributed to intersections \cite{FHWA1}. 

Some work has considered a reservation-based approach for coordination of CAVs at a signal-free intersection, which requires CAVs to reserve a space-time slot inside the intersection. 
Dresner and Stone \cite{Dresner2008} first introduced this scheme based on a first-in-first-out (FIFO) queuing policy. In a sequel paper, Hausknecht et al. \cite{hausknecht2011autonomous} extended this scheme to the network of interconnected intersections aimed at exploring the best route to navigate a CAV arriving at the intersection to minimize its delay through the network.  Jin et al.\cite{jin2012multi} further developed the idea of a reservation-based scheme for signal-free intersection and relaxed the FIFO queuing policy. By relaxing FIFO, they showed that their approach resulted in better performance compared to the previous reservation-based schemes based on FIFO. 

Several studies focusing more on safety proposed a centralized coordination framework for CAVs at signal-free intersections. Lee and Park \cite{Lee2012a} designed a controller to minimize the total length of overlapped trajectories of CAVs inside the intersection. Bichiou and Rakha \cite{bichiou2018developing} considered minimizing travel time jointly with control efforts for $M$ closest CAVs to the intersection. Although the authors showed improvement in fuel efficiency and travel time, their approach takes $2-5$ minutes to find the optimal control actions for $M=4$ rendering it inapplicable for real-time implementation. 
Xu et al. \cite{xu2019cooperative} presented a centralized controller to find vehicles' crossing order at a signal-free intersection based on the heuristic tree search methods. 
Du et al. \cite{du2018hierarchical} introduced a tri-level coordination framework for CAVs at multiple intersections. Employing a consensus algorithm, each intersection derives the desired speed limit in the top level to balance the traffic density over multiple intersections.  In the middle level, the centralized controllers generate each vehicle's reference velocity, minimizing the deviation from the desired speed limit subject to lateral safety at the intersections. Finally, in the last level, each vehicle employs fast model predictive control (MPC) to track the reference velocity while avoiding rear-end collision. 

A number of research efforts have recently developed an optimal decentralized control framework for coordinating CAVs at a signal-free intersection.
Malikopoulos et al. \cite{Malikopoulos2017} presented a bi-level decentralized coordination framework for CAVs at a signal-free intersection addressing the throughput maximization and energy minimization problems. Using FIFO queuing policy, in the throughput maximization problem, each CAV computes its arrival time at the area of potential lateral collisions called merging zone. In the energy minimization problem, each CAV obtains its optimal acceleration/deceleration inside the control zone subject to speed and control constraints. The authors considered no turning maneuvers at the intersection and restricted the CAVs to travel with constant speed inside the merging zone.
Neglecting left/right turns, Mahbub et al. \cite{Mahbub2019ACC} provided the analytical unconstrained solution for two adjacent signal-free intersections.
In a follow-up paper to \cite{Malikopoulos2017}, Malikopoulos and Zhao \cite{malikopoulos2019ACC} further enhanced the framework by presenting an analytical solution for the speed-dependent rear-end safety constraint. 
Relying on FIFO queuing policy, Zhang and Cassandras \cite{zhang2019decentralized} presented a single-level decentralized coordination framework by formulating the objective function of each CAV to jointly minimize travel time and control effort with considering the minimum distance rear-end safety constraint. The authors provided the analytical solution for speed-dependent rear-end safety constraint in \cite{zhang2019joint}. 

Other research efforts have used scheduling theory to address the signal-free intersection problem   \cite{Colombo2015,Ahn2014,DeCampos2015a,Ahn2016,fayazi2018mixed,guney2020scheduling,yu2019corridor}.
Colombo and Del Vecchio \cite{Colombo2015} designed an intersection controller for a human driver which only intervenes and overrides the driver's control action when necessary, i.e., acting as a supervisory controller. They demonstrated that determining whether a state belongs to the maximal safe, controlled invariant set is equivalent to solving a scheduling problem. Ahn et al. \cite{Ahn2014} extended these results to include uncontrolled human drivers. In a sequel paper, Ahn and Del Vecchio \cite{Ahn2016} solved the supervisory problem for the first-order dynamics without considering the rear-end collision avoidance constraint using a mixed-integer linear program (MILP). Considering first-order dynamics and assuming an imposed speed inside the merging zone, Yu et al. \cite{yu2019corridor} formulated the coordination problem of CAVs at multiple intersections as a centralized MILP, the solution of which yields the trajectory of each CAV, along with the lane-changing maneuver decision, minimizing total travel time.
Fayazi and Vahidi \cite{fayazi2018mixed} considered a centralized intersection controller that constantly solves a scheduling problem using MILP for the arriving vehicles and passes the optimal arrival time for CAVs, thereby reducing stopping at the intersection and improving safety. 

There are several other efforts which have used MPC \cite{mirheli2019consensus,hult2018optimal,kamal2014vehicle}, fuzzy logic \cite{Onieva2012}, navigation function \cite{Makarem2012} to investigate coordination of CAVs at signal-free intersections. A thorough discussion of research efforts in the area of control and coordination of CAVs can be found in \cite{guanetti2018control} and \cite{Rios-Torres2017}.

\subsection{Contributions of This Paper}
A closer look at the literature on coordination of CAVs at signal-free intersections reveals that only a limited number of papers address coordination of CAVs at adjacent intersections. In such interconnected intersections, applying approaches of a single isolated intersection may result in sub-optimal, or even infeasible, solutions for CAVs. This is because the downstream intersection effect on the upstream intersection is not considered, and thus the roads connecting the two intersections can become easily congested. In addition, it is common for isolated intersections to consider a FIFO queuing policy to find the sequence of CAVs to enter the merging zone \cite{Malikopoulos2017,zhang2019decentralized,Rios-Torres2017}. However, considering two intersections together with the same paradigm as of a single intersection results in unnecessary slowdowns of the CAVs. Therefore, for two intersections that are closely distanced, not only we should not consider each intersection in isolation, but we also need a new paradigm for coordinating CAVs in these traffic scenarios.

In earlier work \cite{chalaki2020TCST}, we established a bi-level energy-optimal coordination framework for CAVs at multiple adjacent intersections without left/right turns focusing on minimizing energy consumption of CAVs. In the upper level, for each CAV, we presented a recursive algorithm to find the energy-optimal arrival time at each intersection along with the optimal lane to occupy. Given the solution of the upper-level optimization problem, we formulated an optimal control problem with interior-point constraints, the solution of which yields the energy optimal control input.

In this paper, we present a hierarchical decentralized coordination framework for CAVs at two adjacent intersections consisting of two levels. In the upper level, we formulate a decentralized scheduling problem for each CAV, which can be solved by using MILP upon entering the control zone. The solution of the upper-level problem yields the minimum travel time while satisfying safety constraints. The solution of the upper-level problem becomes the inputs of the low-level problem. In the low level, we formulate an optimal control problem for each CAV, the solution of which yields the energy optimal control input.
The contributions of this paper are: (1) the development of a hierarchical optimization framework to coordinate CAVs at two adjacent intersections considering all traffic movements aimed at decreasing both delay and travel time of each CAV; (2) a decentralized scheduling scheme for the upper-level problem considering state and control constraints that relaxes the strict FIFO queuing policy; (3) a complete, closed-form solution of the low-level optimization problem including the speed-dependent rear-end safety constraint and state and control constraints; and (4) a demonstration of the effectiveness of our approach through extensive numerical simulations including all possible paths for CAVs in two adjacent intersections and comparison with signalized intersections, centralized scheduling, and FIFO queuing policy. A limited-scope analysis of the hierarchical framework for the unconstrained solution with set constant speed inside the merging zones was presented in \cite{chalaki2019}.

From the technical perspective, although this paper and our earlier work \cite{chalaki2020TCST} address a similar problem, they are different from each other on major aspects such as problem formulation, solution approach, and results as follows: \\
%\begin{enumerate}
    (1) In this paper, we consider two adjacent intersections including every possible path. In \cite{chalaki2020TCST}, we considered multiple multi-lane adjacent intersections; however, we limited our analysis to the cases that no left/right turns are allowed.\\
    (2) In this paper, we partition the area around two adjacent intersections into different zones, and assume that the speed of each CAV at the boundary of zones within the merging zones is given and is equal to $v_{\text{merge}}$.  The time that a CAV is inside each merging zone depends on the solution of the upper-level scheduling problem.
    However, in \cite{chalaki2020TCST}, we imposed a constant average speed inside the merging zone resulting in traveling at the merging zone with constant time.\\
    (3) The upper-level coordination framework in this paper is profoundly different from the one proposed in \cite{chalaki2020TCST}. The former is concerned with finding the arrival time at each partitioned zone on the CAV's path aimed at minimizing total travel time through formulating a scheduling problem, while the latter is formulated to find the energy-optimal arrival time at each merging zone and optimal lane to occupy through a recursive algorithm.\\
    (4) Similar to \cite{chalaki2020TCST}, in this paper, we follow Hamiltonian analysis to derive the closed-form analytical solution to the low-level problem. However, two major differences set the two low-level problems apart and make their Hamiltonian analysis fundamentally different. First, in this paper, the speed at the boundaries is defined, but in \cite{chalaki2020TCST}, speed at the initial time is only given. Thus, the problem defined at \cite{chalaki2020TCST} requires to satisfy new sets of optimality conditions due to the interior-point constraints which makes it different from the analysis here. Second, in contrast to \cite{chalaki2020TCST}, the rear-end safety constraint in this paper is speed dependent, which sets its Hamiltonian analysis completely apart from the minimum safe distance rear-end safety constraint discussed in \cite{chalaki2020TCST}.\\
    (5)  Moreover, in order to evaluate the performance of our approaches, we conducted different numerical simulations in both papers. In this paper, we provide simulation results for two scenarios. In the first scenario, we investigate coordination of CAVs at two adjacent intersections considering different traffic volumes, and then compare the results with the baseline scenario consisting of two-phase traffic signals. In the second scenario, we further evaluate the performance of our upper-level scheduling approach compared to the centralized scheduling and FIFO queuing policy. On the other hand, in \cite{chalaki2020TCST}, we studied the implications of our proposed coordination framework for CAVs under different traffic volumes, and then compared the results with a baseline scenario consisting of two-phase traffic signals. Moreover, we provided simulation results for symmetric and asymmetric adjacent intersections.
%\end{enumerate}

\subsection{Comparison with Related work}
To the best of our knowledge, this is the first attempt to establish a coordination framework for adjacent intersections aimed at minimizing total travel time. Therefore, we believe that this paper advances the state of the art in the following ways. First, in contrast to other efforts that investigated two intersections in isolation \cite{Colombo2014}, our framework presents a scheduling-based approach to consider the effects of intersections' interdependence and include effects of the downstream intersection on the upstream intersection. Second, our framework is not limited to straight paths \cite{Mahbub2019ACC,chalaki2020TCST} and does not exclude merging or splitting paths \cite{Colombo2014}. 
  Third, in several research efforts, the lateral safety was ensured through a strict FIFO queuing policy \cite{Malikopoulos2017,zhang2019decentralized,zhang2019joint,bichiou2018developing} or a centralized controller \cite{xu2019cooperative,kamal2014vehicle,du2018hierarchical,guney2020scheduling,Gregoire2014a,hult2018optimal}. In contrast, the decentralized upper-level scheduling problem in our framework relaxes the strict FIFO queuing policy. Namely, we demonstrate how our proposed framework outperforms the FIFO queuing policy through numerical simulations.
Finally, our framework in this paper, relaxes the assumptions of constant speed \cite{Malikopoulos2017,chalaki2019} and constant travel time \cite{chalaki2020TCST} inside the merging zone.
  
\subsection{Organization of This Paper}
The organization of this paper is as follows. 
In Section \ref{sec:PF}, we provide a detailed exposition of the modeling framework and the formulation of both low-level and upper-level optimization problems, while in Section \ref{sec:Solu}, we derive the corresponding solutions. We demonstrate the effectiveness of our approach through simulation in Section \ref{sec:SimRes}. Finally, in Section \ref{sec:conc}, we draw concluding remarks and discuss potential directions for future research.

\section{Problem Formulation} \label{sec:PF}
We consider two adjacent intersections shown in Fig. \ref{fig:2} which are closely distanced from each other. A "coordinator" stores information about the intersections' geometric parameters, the paths of the CAVs crossing the intersections, and the planned trajectories of CAVs.

The coordinator does not make any decision and it only acts as a database among the CAVs. The coordinator can be a physical infrastructure such as a drone, road site unit, or a cloud storage. In the remainder of this paper, we use the drone as one realization of the coordinator. We define the areas at which lateral collision inside the control zone may occur as merging zones.

%\subsection{Local Coordination System}
\subsection{Modeling Framework}
Let $N(t)\in\mathbb{N}$ be the total number of CAVs entered the control zone by time $t\in\mathbb{R}^{+}$ and $\mathcal{N}(t)=\{1,\ldots,N(t)\}$ be the queue that designates the order that each CAV entered the control zone. Upon entering the control zone, each CAV  is assigned an integer $N(t)+1$ by the drone. If two, or more, CAVs enter the control zone at the same time, the CAV with a shorter path receives lower index in the queue; however, if the length of their path is the same, then their index is chosen arbitrarily. Finally, each CAV removes itself from $\mathcal{N}(t)$ when it exits the control zone. When there is no CAV inside the control zone, the queue $\mathcal{N}(t)$ is reset to zero.

We partition the roads around the intersections into $n_z\in\mathbb{N}$ zones where each zone has a unique integer index that belongs to the set $\mathcal{M}=\{1,\dots,n_z\}$. Although the number $n_z$ of partitions is arbitrary, choosing a big number increases the burden of computation for the scheduling problem since each CAV $i\in\mathcal{N}(t)$ needs to find its arrival time at each zone. We consider each road connecting to the merging zone to be a single zone. Similarly, we partition each merging zone into four smaller zones (Fig. \ref{fig:2}). Without being restrictive in our analysis, the total number of zones in the two intersections considered here (Fig. \ref{fig:2}) is $n_z=22$. We should note that zones are numbered arbitrarily. 
\begin{definition}
When CAV $i \in\mathcal{N}(t)$ enters the control zone, it creates a tuple of the zones $\mathcal{I}_i:=[m_1,\ldots,m_n]$, $m_n\in\mathcal{M}$, $n\in\mathbb{N}$, defined as the ``path'' of CAV $i$, where $m_1$ and $m_n$ denote the first and last zone on its path respectively, that $i$ needs to cross until it exits the control zone. 
\end{definition}
\begin{definition}
For each CAV $i\in\mathcal{N}(t)$ upon entering the control zone, we define the set $\mathcal{C}_{i,j}$ of conflict zones with CAV $j\in\mathcal{N}(t)$, which is present in the control zone ($j<i$),
\begin{gather}\label{1a}
\mathcal{C}_{i,j}=\{m ~|~m\in\:\mathcal{M},~ m\in\:\mathcal{I}_i \: , \:m\in\:\mathcal{I}_j \}.
\end{gather}
\end{definition}
\begin{figure}
\centering
\includegraphics[width=0.99\linewidth]{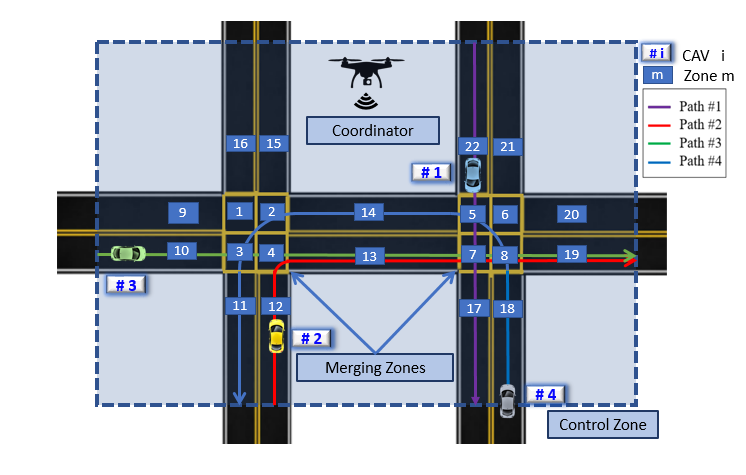}
\caption{
Two interconnected intersections with a drone as a coordinator. Zones numbered topologically and the fixed path for each CAV is shown.}
\label{fig:2}%
\end{figure}
For example in Fig. \ref{fig:2}, \mbox{CAV~\#$3$} has the following conflict tuples with \mbox{CAV~\#$1$} and \mbox{\#$2$} respectively: $\mathcal{C}_{3,1}=\{7\}$ and  $\mathcal{C}_{3,2}=\{4,13,7,8,19\}$.
\subsection{Vehicle model and assumptions}
We model the dynamics of each CAV $i\in\mathcal{N}(t)$ as a double integrator
\begin{gather}\label{27a}
\dot{p}_i(t)=v_i(t),\\
\dot{v}_i(t)=u_i(t),\nonumber
\end{gather}
where $p_{i}(t)\in\mathcal{P}_{i}$, $v_{i}(t)\in\mathcal{V}_{i}$, and
$u_{i}(t)\in\mathcal{U}_{i}$ denote position, speed and acceleration at $t\in\mathbb{R}^{+}$. Let $\mathbf{x}_{i}(t)=\left[p_{i}(t) , v_{i}(t)\right]^\top$ and $u_{i}(t)$ be the state
and control input of CAV $i$ at time $t$ respectively.
Let $t_{i}^{0}$ and $t_{i}^{f}$ be the time that CAV $i\in\mathcal{N}(t)$
enters and exits the control zone respectively, and $\mathbf{x}_{i}^{0}=\left[p_{i}(t_{i}^{0}), v_{i}(t_{i}^{0}) \right] ^\top$ be its initial state.
For each CAV $i\in\mathcal{N}(t)$, the control input and speed are bounded with the following constraints
\begin{equation}\label{uconstraint}
    u_{i,\min}\leq u_i(t)\leq u_{i,\max},
\end{equation}
\begin{equation}\label{vconstraint}
    0\leq v_{\min}\leq v_i(t)\leq v_{\max},
\end{equation}
where $u_{i,\min},u_{i,\max}$ are the minimum and maximum control inputs for each CAV $i\in\mathcal{N}(t)$, and $v_{\min},v_{\max}$ are the minimum and maximum speed limits respectively. Without loss of generality, we do not consider diversity among CAVs' maximum  and minimum control input. Thus, to this end, we set $u_{i,\min}=u_{\min}$ and $u_{i,\max}=u_{\max}$.
The sets $\mathcal{P}_{i}$,
$\mathcal{V}_{i}$ and $\mathcal{U}_{i}$, $i\in\mathcal{N}(t),$
are complete and totally bounded subsets of $\mathbb{R}$.

\begin{definition}\
Let CAV $k\in\mathcal{N}(t)$ be the preceding vehicle of CAV $i\in\mathcal{N}(t)$ in zone $m\in\:\mathcal{M}$. The distance, $d^m(p_k(t),p_i(t))$, between $i$ and $k$ in zone $m$ is defined as
\begin{equation}
d^m(p_k(t),p_i(t))=(p_k(t)-p_k(T_k^{^{m}}))-(p_i(t)-p_i(T_i^{^{m}})),
\end{equation}
where $p_k(T_k^{^{m}}),p_i(T_i^{^{m}})\in \mathbb{R}^+$ correspond to the distances from the entry point of the control zone to the entry point of the conflict zone $m$ for CAV $k$ and $i$ respectively. If no such CAV $k$ leads CAV $i$ at zone $m$, then we let $d^m(\cdot)\to\infty$. Note that, $p_k(T_k^{^{m}})$ and $p_i(T_i^{^{m}})$ depends on the geometry of the control zone and intersections. 
\end{definition}

To ensure the absence of rear-end collision between CAV $i\in\mathcal{N}(t)$ and the preceding CAV $k\in\mathcal{N}(t)$ in zone $m\in\:\mathcal{I}_i$, while  $m\in\mathcal{I}_k $, we impose the following rear-end safety constraint
\begin{equation}\label{RearEndCons}
  d^m(p_k(t),p_i(t))\geq \delta_i(t),~t\in[T_i^{^{m}},T^{^{m^\prime}}_i],
\end{equation}
where $T_i^{^{m}}$ and $T^{^{m^\prime}}_i$ are the entry time at and exit time from from zone $m$ of CAV $i$ respectively, and $\delta_i(t)$ is a predefined safe distance. 
The minimum safe distance $\delta_i(t) $ is a function of speed
\begin{equation}
    \delta_i(t) = \gamma + \varphi v_i(t),~t\in[T_i^{^{m}},T^{^{m^\prime}}_i],
\end{equation}
where $\gamma$ is the standstill distance, and $\varphi$ is the reaction time.

In our modeling framework described above, we impose the following assumptions:
\begin{assumption}\label{speedatBndMergingZone}
The speed of each CAV $i\in \mathcal{N}(t)$ at the boundary of zone $m\in\mathcal{M}$ in the merging zones is given and is equal to $v_{\text{merge}}$.
\end{assumption}
\begin{assumption}\label{feassibleAssum}
None of the state, control and safety constraints is active for each CAV $i\in \mathcal{N}(t)$ at the entry of the control zone.
\end{assumption}

The first assumption can be relaxed by estimating the speed at the boundaries of each zone in the upper-level problem. The second assumption is imposed to ensure that the initial state and control input are feasible. This is a reasonable assumption since CAVs are automated, and so there is no compelling reason for them to activate any of the constraints by the time they enter the control zone.

\subsection{Upper-level Problem: Scheduling}  
The objective of each CAV inside the control zone is to derive the optimal control input (acceleration/deceleration) aimed at minimizing travel time and improving traffic throughput. In the upper-level scheduling problem, each CAV $i\in\mathcal{N}(t)$ computes its arrival time to each zone $m\in\mathcal{I}_i$ that minimizes its total travel time inside the control zone and guarantees lateral safety constraints. 

Scheduling is a decision-making process that addresses the optimal allocation of resources to tasks over given time periods \cite{pinedo2016scheduling}.
 Thus, in what follows, we use scheduling theory to find the time that CAV $i\in\mathcal{N}(t)$ has to reach the zone $m\in\mathcal{I}_i$.
Each zone $m\in\mathcal{M}$ represents a "resource," and CAVs crossing this zone are the "jobs" assigned to the resource. 
\begin{definition}\label{DEF: scheduling tuple}
The time that a CAV $i\in\mathcal{N}(t)$  enters a zone $m\in\mathcal{I}_i$ is called "schedule" and is denoted by $T_i^{^{m}}\in\mathbb{R}^+$.
For CAV $i\in\mathcal{N}(t),$ we define a "schedule tuple,"  
\begin{equation}\label{2a1}
\mathcal{T}_{i}=[T_i^{^{m}} ~|~ m\in\mathcal{I}_i].
\end{equation}
\end{definition}
For example, the schedule tuple of CAV \#$1$ in Fig. \ref{fig:2} is  $\mathcal{T}_{1}=[{T}_{1}^{^{22}},{T}_{1}^{^{5}},{T}_{1}^{^{7}},{T}_{1}^{^{17}}]$.

For each zone $m\in\mathcal{I}_i$, $i\in\mathcal{N}(t),$ the schedule $T_i^{^{m}}\in\mathbb{R}^+$ is bounded by
\begin{equation}\label{Schedulecons1}
     T_i^{^{\underaccent{\bar}{m}}}+R_i^{^{\underaccent{\bar}{m}}}\leq T_i^{^{m}}\leq T_i^{^{\underaccent{\bar}{m}}}+D_i^{^{\underaccent{\bar}{m}}},\\
\end{equation}
where $\underaccent{\bar}{m}\in\mathcal{I}_i$ is the zone right before zone $m\in\mathcal{I}_i$, $T_i^{^{\underaccent{\bar}{m}}}$ is the time that CAV $i$ enters the zone $\underaccent{\bar}{m}$, and $R_i^{^{\underaccent{\bar}{m}}}\in\mathbb{R}^+$ and $D_i^{^{\underaccent{\bar}{m}}}\in\mathbb{R}^+$ are the shortest and latest feasible times that it takes for CAV $i\in\mathcal{N}(t)$ to travel through the zone $\underaccent{\bar}{m}$ respectively.  $R_i^{^{\underaccent{\bar}{m}}}\in\mathbb{R}^+$ and $D_i^{^{\underaccent{\bar}{m}}}\in\mathbb{R}^+$ are called the \textit{release time} and the \textit{deadline} of the job respectively. 
\begin{remark}

The exit time, $T^{^{m^\prime}}_i,$ of CAV $i\in\mathcal{N}(t)$ from zone $m\in\mathcal{I}_i$ is equal to the entry time to zone ${\underaccent{}{\bar{m}}}\in\mathcal{I}_i$, which is the zone that CAV $i$ crosses right after zone $m$.
\begin{equation}\label{exit=enter+1}
    T^{^{m^\prime}}_i=T_i^{^{\underaccent{}{\bar{m}}}}.
\end{equation}
\end{remark}
\begin{definition}\label{gamma}
 For each CAV $i\in\mathcal{N}(t)$, we define the set $\Gamma_i$ of all feasible time headways which do not violate the rear-end safety constraint \eqref{RearEndCons} at the entry of all zones $m \in\mathcal{I}_i$.
\end{definition}

\begin{definition}\label{safetyConst}
 For each CAV $i\in\mathcal{N}(t)$ and $j\in\mathcal{N}(t)$, $j<i,$ the safety constraint at the entry of zone $m\in\mathcal{C}_{i,j}$ can be restated as 
\begin{equation}\label{Schedulecons2}
  |T_i^{^{m}}-T_j^{^{m}}|\geq h,
\end{equation}
where $h\in\Gamma_i$ is the minimum time headway to avoid lateral collision.
\end{definition}

\begin{remark}
Definition \ref{safetyConst} relaxes the FIFO queuing policy for entering  zone $m\in\mathcal{M}$ by restricting the absolute value of the difference between the two schedules, rather than just enforcing $T_i^{^{m}} -T_j^{^{m}}\geq h$.
\end{remark}

\begin{problem}\label{Problem1} \textit{(Scheduling problem)}
For each CAV $i\in\mathcal{N}(t)$ with schedule tuple $\mathcal{T}_{i}$ and minimum time headway $h\in\Gamma_i$, the scheduling problem is formulated as follows
\begin{equation}\label{Scheduling}
\begin{array}{ll}
\min\limits_{\mathcal{T}_{i}} \quad J^{[1]}_i(\mathcal{T}_{i})=t_i^f(\mathcal{T}_{i}),\\
 \text{subject to:}~ (\ref{Schedulecons1}), (\ref{Schedulecons2}).\\
\end{array}  
\end{equation}
\end{problem}
\begin{remark}
In Problem \ref{Problem1}, the time $t_i^f$ that each CAV $i$ exits the control zone is a function of the schedule tuple $\mathcal{T}_{i}$ as implied by  \eqref{Schedulecons1}, which relates the arrival time at each zone to the arrival time at its previous zone.
\end{remark}
Upon entering the control zone, CAV $i$ solves the scheduling problem that yields its time-optimal arrival time at each zone. Then, it shares the schedule tuples with the drone.
Consider, for example (see Fig. \ref{fig:2}), \mbox{CAV \#$3$} with $\mathcal{I}_3=[10,3,4,13,7,8,19]$, $\mathcal{C}_{3,1}=\{7\}$ and  $\mathcal{C}_{3,2}=\{4,13,7,8,19\}$. The constraint (\ref{Schedulecons1}) for each zone $m\in\mathcal{I}_3$ is
\begin{align}
         &t_3^{0}+R_3^{^{10}}\label{eq:firstzonebound}\leq T_3^{^3}\leq t_3^{0}+D_3^{^{10}},\\
         &T_3^{^3}+R_3^{^3}\leq T_3^{^4}\leq T_3^{^3}+D_3^{^3},\\
         &T_3^{^4}+R_3^{^4}\leq T_3^{^{13}}\leq T_3^{^4}+D_3^{^4},\\
         &T_3^{^{13}}+R_3^{^{13}}\leq T_3^{^7}\leq T_3^{^{13}}+D_3^{^{13}},\\
         &T_3^{^7}+R_3^{^7}\leq T_3^{^8}\leq T_3^{^7}+D_3^{^7},\\
         &T_3^{^8}+R_3^{^8}\leq T_3^{^{19}}\leq T_3^{^8}+D_3^{^8},\\
         &T_3^{^{19}}+R_3^{^{19}}\leq t_3^{f}\leq T_3^{^{19}}+D_3^{^{19}}\label{eq:tfbounded}.
\end{align}
Note that the time \mbox{CAV \#$3$} enters the zone $\#10$, $T_3^{^{10}}$ is equal to the time that CAV $\#$ 3 enters the control zone $t_3^0$.
From the safety constraint (\ref{Schedulecons2}) for $m\in\mathcal{C}_{3,1}$ and $m\in\mathcal{C}_{3,2}$ we have 
\begin{align}
  &|T_3^{^7}-T_1^{^7}|\geq h,\\
  &|T_3^{^4}-T_2^{^4}|\geq h,\\
  &|T_3^{^{13}}-T_2^{^{13}}|\geq h,\\
  &|T_3^{^7}-T_2^{^7}|\geq h,\\
  &|T_3^{^8}-T_2^{^8}|\geq h,\\
  &|T_3^{^{19}}-T_2^{^{19}}|\geq h,
\end{align}
where the schedule tuples of CAV \#$1$ and \#$2$ are accessible through the drone. CAV $i\in\mathcal{N}(t)$ derives the release time and the deadline of each zone $m\in\mathcal{I}_i$ prior to solving the scheduling problem (Problem \ref{Problem1}). CAV \#$3$ above, for example, computes $R_3^{^m}$ and $D_3^{^m}$ for all $m\in\mathcal{I}_3$, and then it solves the scheduling problem, the solution of which yields the tuple $\mathcal{T}_{3}$. Next, we formulate the problems that yield the release time and deadline respectively.

\begin{problem}\label{problem2} \textit{(Release time problem)}
For each CAV $i\in\mathcal{N}(t)$ and each zone \mbox{$m\in\mathcal{I}_i$}, the release time $R_i^{^{m}}$ is derived by the following optimization problem
\begin{equation}\label{releasetimeProblem} 
\begin{array}{ll}
\min\limits_{\textit{u}_i\in\mathcal{U}_i} J^{[2]}_i(u_i(t))=t^{e,m}_i(u_i(t))-t^{s,m}_i, \\
\text{subject to:}~(\ref{27a}), (\ref{uconstraint}), (\ref{vconstraint}),\\
\text{given }~ p_i(t^{s,m}_i), v_i(t^{s,m}_i), p_i(t^{e,m}_i), v_i(t^{e,m}_i), \\
\end{array}  
\end{equation} \end{problem}
\noindent where $t^{s,m}_i$ and $t^{e,m}_i$ are the time that CAV $i\in\mathcal{N}(t)$ enters and exits the zone $m\in\mathcal{I}_i$ respectively.
The optimal solution ${u}_i^\ast(t)$ of Problem \ref{problem2} yields the release time, $R_i^{^{m}} = t^{e,m}_i({u}_i^\ast(t))-t^{s,m}_i$, which is the shortest feasible time that it takes for CAV $i$ to travel through zone $m$ without considering safety.

\begin{problem}\label{problem2.bDeadline}\textit{(Deadline problem)}
For each CAV $i\in\mathcal{N}(t)$ and each zone \mbox{$m\in\mathcal{I}_i$}, the deadline $D_i^{^m}$ is derived by the following optimization problem
\begin{equation}\label{deadlineProblem}
\begin{array}{ll}
%\textbf{\textit{x}}_i=(p_i,v_i)\\
%\textbf{\textit{u}}_i:=u_i \\
%\mathcal{U}_i=\{u:u_\mathrm{\min}\leq u_i\leq u_\mathrm{\max}\}\\
\max\limits_{\textit{u}_i\in\mathcal{U}_i} ~ J^{[3]}_i(u_i(t))=t^{e,m}_i(u_i(t))-t^{s,m}_i, \\
\text{subject to:}~(\ref{27a}), (\ref{uconstraint}), (\ref{vconstraint}),\\
\text{given }~ p_i(t^{s,m}_i), v_i(t^{s,m}_i), p_i(t^{e,m}_i), v_i(t^{e,m}_i). \\
\end{array}  
\end{equation} \end{problem}
The optimal solution ${u}_i^\ast(t)$ of Problem \ref{problem2.bDeadline} yields the deadline, $D_i^{^m} = t^{e,m}_i({u}_i^\ast(t))-t^{s,m}_i$, which is the latest feasible time that it takes for CAV $i$ to travel through zone $m$ without considering safety.
\begin{remark}
Note that in Problems \ref{problem2} and \ref{problem2.bDeadline}, we do not consider safety constraints. The only objective of these two problems is to find the feasible bound for arrival time at each zone $m$ in \eqref{Schedulecons1}, to form the scheduling problem (Problem \ref{Problem1}).
\end{remark}

\subsection{Low-level problem: Energy Minimization}
After solving the upper-level scheduling problem, the low-level problem yields for each CAV the minimum control input at each zone (acceleration/deceleration) that satisfies the schedule resulted from the upper-level problem. 
\begin{problem}\label{problem3}
For each CAV $i\in\mathcal{N}(t)$ and each zone \mbox{$m\in\mathcal{I}_i$}, the energy minimization problem is
\begin{equation}\label{EnergyOptimalProblem}
 \begin{array}{ll}
 \min\limits_{\textit{u}_i\in\mathcal{U}_i} J^{[4]}_i(u_i(t),T_i^{^{m}},T_i^{^{\underaccent{}{\bar{m}}}})= {\dfrac{1}{2}} \bigints_{~T_i^{^{m}}}^{T_i^{^{\underaccent{}{\bar{m}}}}} u_{i}^2(t)~dt, \\
        
\text{subject to:}~(\ref{27a}), (\ref{uconstraint}), (\ref{vconstraint}),(\ref{RearEndCons}),\\
\text{given }~p_i(T_i^{^{m}}), v_i(T_i^{^{m}}), p_i(T_i^{^{\underaccent{}{\bar{m}}}}), v_i(T_i^{^{\underaccent{}{\bar{m}}}}), T_i^{^{m}}, T_i^{^{\underaccent{}{\bar{m}}}},\\
           \end{array} 
\end{equation}\end{problem}
\noindent where $T_i^{^{m}}$ and $T_i^{^{\underaccent{}{\bar{m}}}}$ are the entry and exit time of CAV $i\in\mathcal{N}(t)$ from zone $m\in\mathcal{I}_i$, determined by the upper-level scheduling problem.
We demonstrate the system architecture in Fig. \ref{fig: system structure}

\begin{figure}
\centering
\includegraphics[width=0.99\linewidth]{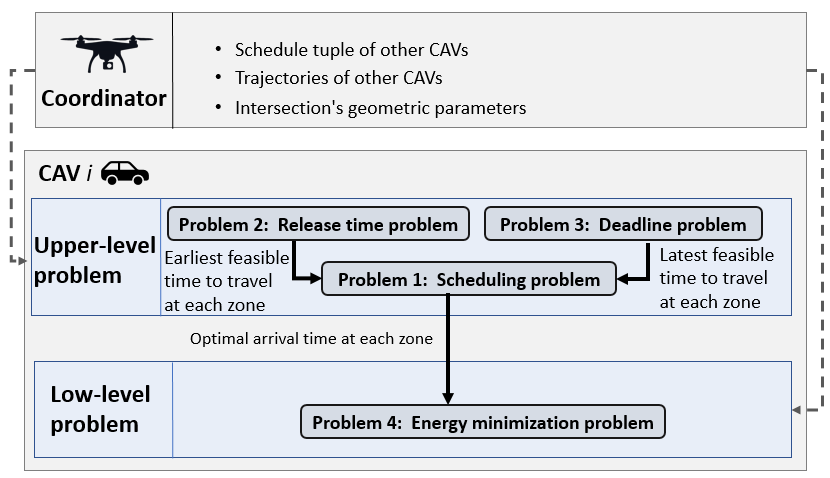}
\caption{
The hierarchical system architecture for CAV $i$ and drone (coordinator)}
\label{fig: system structure}%
\end{figure}

\section{Solution of low-level and upper-level problems} \label{sec:Solu}

In the previous section, we described the hierarchical optimization framework that consists of three upper-level problems and one low-level problem. 
Upon entering the control zone, each CAV is added to the queue $\mathcal{N}(t),$ and it solves the upper-level problems (Problems \ref{Problem1}, \ref{problem2} and \ref{problem2.bDeadline}) the solutions of which designate the optimal entry time to each zone along its path. In the upper-level problems, each CAV first derives the release time and deadline (Problems \ref{problem2} and \ref{problem2.bDeadline}) for each zone prior to solving the scheduling problem (Problem \ref{Problem1}). The outcome of the upper-level scheduling problem becomes the input of the low-level problem (Problem \ref{problem3}). In particular, in the low-level problem (Problem \ref{problem3}), each CAV derives the optimal control input (acceleration/deceleration) that minimizes energy consumption
at each zone of its path at the times specified in the upper-level (Problem \ref{Problem1}). 

To this end, to simplify notation, we use $p^{s}_i, v^{s}_i, p^{e}_i$ and $v^{e}_i$ instead of $p_i(t^{s,m}_i)$, $v_i(t^{s,m}_i)$, $p_i(t^{e,m}_i)$ and $v_i(t^{e,m}_i)$ respectively.

\subsection{Analytical solution of the release time and the \mbox{deadline}}
In this section, we provide the analytical closed-form solutions to Problems \ref{problem2} and \ref{problem2.bDeadline}, which each CAV $i\in\mathcal{N}(t)$ uses to formulate the scheduling problem (Problem \ref{Problem1}). One of the main advantages of deriving analytical solutions to Problem \ref{problem2} and \ref{problem2.bDeadline} is to improve the computational effort in the upper-level problem. 
For the analytical solution of the release time problem (Problem \ref{problem2}), we apply Hamiltonian analysis. 
For each CAV $i\in\mathcal{N}(t)$ the Hamiltonian function with the state and control constraints adjoined is  
\begin{equation}\label{hamil1}
\begin{aligned}
H_i&(t,p_i(t),v_i(t),u_i(t),\lambda_i(t))=1+\lambda_i^p v_i (t)+\lambda_i^v u_i(t)\\
+&\mu_i^a(u_i (t)-u_{{\max}})+\mu_i^b(u_{\min}-u_i(t))\\
+&\mu_i^c(v_i(t)-v_{{{\max}}})+\mu_i^d(v_{\min}-v_i(t)),\\
\end{aligned}
\end{equation}
where \(\lambda_i^p\) and \(\lambda_i^v\) are costates, and $\mu^\top$ is a vector of a lagrange multipliers: 
\begin{align}\label{20}
\mu_i^a&= \left\{ \begin{array}{ll}
         > 0\quad u_i(t)-u_{{\max}}=0\\
         =0 \quad u_i(t)-u_{{\max}}<0
\end{array} \right., \\
\mu_i^b&= \left\{ \begin{array}{ll}
         > 0\quad u_{\min}-u_i(t)=0\\
         =0 \quad u_{\min}-u_i(t)<0
\end{array} \right., \\
\mu_i^c&= \left\{ \begin{array}{ll}
         > 0\quad v_i(t)-v_{{\max}}=0\\
         =0 \quad v_i(t)-v_{{\max}}<0
\end{array} \right. ,\\
\mu_i^d&= \left\{ \begin{array}{ll}
         > 0\quad v_{\min}-v_i(t)=0\\
         =0 \quad v_{\min}-v_i(t)<0
\end{array} \right. .
\end{align}
The Euler-Lagrange equations become: 
\begin{align}
&\dot{\lambda_i^p}=-\frac{\partial H_i}{\partial p_i}=0,\label{24}\\
&\dot{\lambda_i^v}=-\frac{\partial H_i}{\partial v_i}=-\lambda_i^p-\mu_i^c+\mu_i^d .\label{25}
\end{align}

Similarly for the deadline problem (Problem \ref{problem2.bDeadline}), the Hamiltonian function is 
\begin{equation}\label{hamildeadline}
\begin{aligned}
H_i&(t,p_i(t),v_i(t),u_i(t),\lambda_i(t))=-1+\lambda_i^p v_i (t)+\lambda_i^v u_i(t)\\
+&\mu_i^a(u_i (t)-u_{{\max}})+\mu_i^b(u_{\min}-u_i(t))\\
+&\mu_i^c(v_i(t)-v_{{{\max}}})+\mu_i^d(v_{\min}-v_i(t)).
\end{aligned}
\end{equation}

\subsubsection{State constraints are not active}
First, we consider the case where the state constraint \eqref{vconstraint} does not become active, hence $\mu_i^c = \mu_i^d=0$.

\begin{lemma}\label{Lemma-oneSP-U}
The sign of the optimal control input of the release time problem (Problem \ref{problem2}) for zone $m$, when the state constraint is not active, can change at most once, and it is equal to either: (1) $u_i(t) =u_{\min}$, or (2) $u_i(t)=u_{\max}$ , or (3) $u_i(t)=u_{\max}$ and then it switches to $u_i(t)=u_{\min}$.
\end{lemma}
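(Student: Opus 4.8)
The plan is to treat Problem~\ref{problem2} as a minimum-time optimal control problem with fixed boundary states and free terminal time, apply the Pontryagin Minimum Principle to extract the bang-bang structure from the costate dynamics, and then use the free-terminal-time transversality condition to discard the one switching pattern not listed in the statement.

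First, I would set $\mu_i^c=\mu_i^d=0$ (state constraint inactive), so that the Euler--Lagrange equations \eqref{24}--\eqref{25} give $\lambda_i^p$ constant and $\lambda_i^v$ affine in $t$, i.e. $\lambda_i^v(t)=\lambda_i^v(t^{s,m}_i)-\lambda_i^p\,(t-t^{s,m}_i)$. Minimizing the Hamiltonian \eqref{hamil1} pointwise over $u_i\in[u_{\min},u_{\max}]$ then yields $u_i^\ast(t)=u_{\max}$ wherever $\lambda_i^v(t)<0$ and $u_i^\ast(t)=u_{\min}$ wherever $\lambda_i^v(t)>0$. Next I would rule out a singular arc: if $\lambda_i^v\equiv 0$ on a subinterval, then $\dot\lambda_i^v=0$ forces $\lambda_i^p=0$ through \eqref{25}, so $H_i\equiv 1$ on that arc; but since $t^{e,m}_i$ is free and $H_i$ has no explicit dependence on $t$, the transversality condition gives $H_i\equiv 0$ along the optimal trajectory, a contradiction. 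Hence $\lambda_i^v$ is a nontrivial affine function, changes sign at most once, and therefore $u_i^\ast$ switches at most once and only between $u_{\min}$ and $u_{\max}$. This leaves four a priori patterns: $u_{\min}$ throughout, $u_{\max}$ throughout, $u_{\max}$ then $u_{\min}$, or $u_{\min}$ then $u_{\max}$.

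The heart of the argument is eliminating the last pattern. Suppose a switch occurs at $t_1\in(t^{s,m}_i,t^{e,m}_i)$; by continuity of $\lambda_i^v$ this is its unique zero, so $\lambda_i^v(t_1)=0$. Evaluating $H_i\equiv 0$ at $t_1$ — where the multipliers $\mu_i^a,\mu_i^b$ do not contribute — gives $1+\lambda_i^p\,v_i(t_1)=0$, hence $\lambda_i^p=-1/v_i(t_1)<0$ since $v_i(t_1)\ge v_{\min}>0$ by \eqref{vconstraint}. Then \eqref{25} gives $\dot\lambda_i^v=-\lambda_i^p>0$, so $\lambda_i^v$ is strictly increasing: negative before $t_1$ and positive after. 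Thus $u_i^\ast=u_{\max}$ on $[t^{s,m}_i,t_1)$ and $u_i^\ast=u_{\min}$ on $(t_1,t^{e,m}_i]$, which is exactly case~(3); the pattern $u_{\min}$ then $u_{\max}$ cannot occur. Together with the no-switch cases ($\lambda_i^v>0$ throughout gives case~(1), $\lambda_i^v<0$ throughout gives case~(2)), this finishes the proof.

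I expect the main obstacle to be precisely this last step: counting switches from the costate equations alone permits both one-switch patterns, and excluding $u_{\min}\!\to\!u_{\max}$ genuinely requires invoking $H_i\equiv 0$ to sign $\lambda_i^p$ at the switching instant — intuitively, decelerating and then accelerating is the \emph{slowest} way to traverse the zone (it is the structure relevant to the deadline Problem~\ref{problem2.bDeadline}), not the fastest, so it cannot solve Problem~\ref{problem2}. A minor point to treat carefully is the degenerate case in which the unique zero of $\lambda_i^v$ sits at $t^{s,m}_i$ or $t^{e,m}_i$ rather than strictly inside, which still reduces to the constant-sign cases~(1)--(2).
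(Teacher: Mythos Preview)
Your proposal is correct and follows essentially the same approach as the paper: both derive the affine costate $\lambda_i^v$ from \eqref{24}--\eqref{25}, obtain the bang-bang structure from the minimum principle, and then use the free-terminal-time transversality $H_i\equiv 0$ evaluated at the switching instant to sign $\lambda_i^p$ and eliminate the $u_{\min}\!\to\!u_{\max}$ pattern. The only minor differences are that you explicitly dispose of the singular arc (the paper leaves this implicit) and that you argue the switch direction directly from the sign of $\dot\lambda_i^v$, whereas the paper assumes the $u_{\min}\!\to\!u_{\max}$ pattern and reaches a contradiction via $v_i^\ast(t^{c,m}_i)<0$; these are the same idea in slightly different packaging.
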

\begin{proof}\phantom{\qedhere}
See Appendix \ref{AA}. 
\end{proof}

\begin{lemma}\label{lemma-speed-position-intermediate}
In Problem 2, let $\mathbf{x}_{i}^{s}=[p^s_i,v^s_i]^\top$ and $\mathbf{x}_{i}^{e}=[p^e_i,v^e_i]^\top$ be the initial and final states of CAV $i\in\mathcal{N}(t)$ traveling in zone $m\in\mathcal{I}_i$. Let $\mathbf{x}_{i}^{c}=[p^c_i,v^c_i]^\top$ be the intermediate state at the time $t^{c,m}_i$ that the control input changes sign. Then,
\begin{equation}\label{zz}
p^{c}_i=\frac{{v^e_i}^2-{v^s_i}^2+2(u_{{\max}}p^s_i-u_{\min}p^e_i)}{2(u_{{\max}}-u_{\min})},
\end{equation}
\begin{equation}\label{z}
v^{c}_i=\sqrt{{v^s_i}^2+2u_{{\max}}\cdot(p_i^c-p_i^s)}.
\end{equation}
\end{lemma}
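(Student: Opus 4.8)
The plan is to reduce everything to the bang-bang structure already established in Lemma~\ref{Lemma-oneSP-U} and then to use the elementary constant-acceleration energy identity on each arc. Since this lemma concerns the instant at which the optimal control changes sign, the only relevant case of Lemma~\ref{Lemma-oneSP-U} is case~(3): the optimal control equals $u_{\max}$ on $[t^{s,m}_i,t^{c,m}_i]$ and then switches to $u_{\min}$ on $[t^{c,m}_i,t^{e,m}_i]$ (in cases~(1) and~(2) there is no sign change, hence no intermediate state to characterize). Note that $u_{\max}>u_{\min}$, so the denominator appearing in \eqref{zz} never vanishes.

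The workhorse is the following observation: along any arc on which $u_i(t)\equiv a$ is constant, $\frac{d}{dt}\,v_i^2(t)=2v_i(t)\dot v_i(t)=2a\,\dot p_i(t)$ by \eqref{27a}, so integrating from the start of the arc gives $v_i^2(t)=v_i^2(t_0)+2a\bigl(p_i(t)-p_i(t_0)\bigr)$. Because $v_i(t)\ge v_{\min}\ge 0$ by \eqref{vconstraint}, the position is nondecreasing along each arc and this relation is well posed as an identity between endpoint positions and speeds.

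First I would apply this identity on $[t^{s,m}_i,t^{c,m}_i]$ with $a=u_{\max}$, obtaining $(v^c_i)^2=(v^s_i)^2+2u_{\max}(p^c_i-p^s_i)$; taking the nonnegative square root (again justified by \eqref{vconstraint}) yields \eqref{z}. Next I would apply the identity on $[t^{c,m}_i,t^{e,m}_i]$ with $a=u_{\min}$, obtaining $(v^e_i)^2=(v^c_i)^2+2u_{\min}(p^e_i-p^c_i)$. Eliminating $(v^c_i)^2$ between the two relations gives $(v^e_i)^2-(v^s_i)^2=2u_{\max}(p^c_i-p^s_i)+2u_{\min}(p^e_i-p^c_i)$; collecting the terms in $p^c_i$ and dividing by $2(u_{\max}-u_{\min})$ produces \eqref{zz}.

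The argument is essentially a two-line computation once Lemma~\ref{Lemma-oneSP-U} is available, so there is no real obstacle; the only points requiring care are bookkeeping ones — confirming from Lemma~\ref{Lemma-oneSP-U} that the switch goes from $u_{\max}$ to $u_{\min}$ (so the sign pattern in \eqref{zz} is as written, not its negative), and noting that the monotonicity of $p_i$ on each arc, which follows from $v_i\ge v_{\min}$, is what makes the energy identity legitimate.
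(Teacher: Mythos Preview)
Your proof is correct and takes essentially the same approach as the paper: invoke Lemma~\ref{Lemma-oneSP-U} for the $u_{\max}\to u_{\min}$ bang-bang structure, write the constant-acceleration identity $v^2-v_0^2=2a(p-p_0)$ on each arc, and solve the resulting pair for $p^c_i$ and $v^c_i$. The paper's version is terser, but the substance is identical.
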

\vspace{1mm}
\begin{proof}
From (\ref{27a}) and Lemma \ref{Lemma-oneSP-U}, the intermediate states are found by solving the following system of equations 
\begin{equation}\label{z2}
 \left\{ \begin{array}{ll}
{v^c_i}^2-{v^s_i}^2=2u_{{\max}}\cdot(p^c_i-p^s_i)\\
{v^e_i}^2-{v^c_i}^2=2u_{\min}\cdot(p^e_i-p^c_i)
           \end{array}, \right. 
\end{equation}
which yields (\ref{zz}) and (\ref{z}).
\end{proof}

\begin{proposition}\label{prop-endtime-intermediateTime}
The release time of CAV $i\in\mathcal{N}(t)$ traveling in zone $m\in\mathcal{I}_i$, when the state constraint is not active, is
\begin{equation}\label{processtimeEq}
 R_{i}^{^m}=\frac{v^c_i -v^s_i}{u_{{\max}}}+\frac{v^e_i-v^c_i}{u_{\min}}.
\end{equation}
\end{proposition}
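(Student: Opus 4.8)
The plan is to integrate the optimal trajectory arc by arc and read off the elapsed time directly from the speed profile, then check that the single expression \eqref{processtimeEq} covers all three outcomes of Lemma \ref{Lemma-oneSP-U}. By that lemma, when the state constraint is inactive the optimal control of Problem \ref{problem2} in zone $m$ is bang-bang with at most one switch: (i) $u_i\equiv u_{\min}$, (ii) $u_i\equiv u_{\max}$, or (iii) $u_i\equiv u_{\max}$ on $[t^{s,m}_i,t^{c,m}_i]$ followed by $u_i\equiv u_{\min}$ on $[t^{c,m}_i,t^{e,m}_i]$. I would treat (iii) as the generic case and recover (i), (ii) as degenerate limits.

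For case (iii): on the first arc $\dot v_i=u_{\max}$ is constant, so $v_i(t)=v^s_i+u_{\max}(t-t^{s,m}_i)$, and evaluating at $t=t^{c,m}_i$ with $v_i(t^{c,m}_i)=v^c_i$ gives $t^{c,m}_i-t^{s,m}_i=(v^c_i-v^s_i)/u_{\max}$. Likewise, on the second arc $\dot v_i=u_{\min}$, so $v_i(t)=v^c_i+u_{\min}(t-t^{c,m}_i)$, and evaluating at $t=t^{e,m}_i$ gives $t^{e,m}_i-t^{c,m}_i=(v^e_i-v^c_i)/u_{\min}$. Adding the two arc durations yields $R^{^m}_i=t^{e,m}_i-t^{s,m}_i=\frac{v^c_i-v^s_i}{u_{\max}}+\frac{v^e_i-v^c_i}{u_{\min}}$, where $v^c_i$ is exactly the intermediate speed characterized by Lemma \ref{lemma-speed-position-intermediate}, i.e. \eqref{z} together with \eqref{zz}. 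That both terms are positive (so this is a genuine elapsed time) follows from $u_{\max}>0>u_{\min}$ together with $v^c_i\ge v^s_i$ and $v^c_i\ge v^e_i$, which hold since speed increases along the $u_{\max}$ arc and decreases along the $u_{\min}$ arc.

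For the degenerate cases (i) and (ii) there is no switch, and I would verify that \eqref{processtimeEq} still holds by showing the Lemma \ref{lemma-speed-position-intermediate} expression collapses automatically. Substituting the single-arc kinematic relation ${v^e_i}^2-{v^s_i}^2=2u_{\min}(p^e_i-p^s_i)$ (case (i)) into \eqref{zz} gives $p^c_i=p^s_i$, hence $v^c_i=v^s_i$ by \eqref{z}, so the first term of \eqref{processtimeEq} vanishes and $R^{^m}_i=(v^e_i-v^s_i)/u_{\min}$, the correct single-arc time; symmetrically, ${v^e_i}^2-{v^s_i}^2=2u_{\max}(p^e_i-p^s_i)$ (case (ii)) forces $p^c_i=p^e_i$ and $v^c_i=v^e_i$, so the second term vanishes. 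Thus a single formula handles all branches of Lemma \ref{Lemma-oneSP-U}.

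The only real subtlety — and the step I would be most careful about — is the case bookkeeping: ensuring the ``intermediate state'' $(p^c_i,v^c_i)$ is either genuinely intermediate or degenerates consistently in each branch, and that the kinematic identities are applied on the correct arc with the correct constant acceleration. Beyond that, the argument is a direct integration of \eqref{27a} over intervals of constant control, so I do not anticipate a hard technical obstacle.
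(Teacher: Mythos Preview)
Your proof is correct and follows essentially the same approach as the paper: integrate $\dot v_i$ over each constant-control arc to obtain $t^{c,m}_i-t^{s,m}_i=(v^c_i-v^s_i)/u_{\max}$ and $t^{e,m}_i-t^{c,m}_i=(v^e_i-v^c_i)/u_{\min}$, then sum. Your explicit treatment of the degenerate single-arc cases (i) and (ii) via the collapse $p^c_i=p^s_i$ or $p^c_i=p^e_i$ is a nice addition that the paper leaves implicit.
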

\begin{proof}
When $u_i(t)=u_{\max}$ for all $t\in[t^{s,m}_i,t^{c,m}_i]$ and $u_i(t)=u_{\min}$ for all $t\in[t^{c,m}_i,t^{e,m}_i]$, where $t^{c,m}_i$ is the time that the control input changes sign, the total time traveled inside the zone $m$ can be found by integrating~(\ref{27a}), hence 
\begin{equation}\label{lem3proof2}
\begin{array}{ll}
v^c_i -v^s_i=u_{{\max}}\cdot (t^{c,m}_i-t^{s,m}_i), & \forall~ t\in[t^{s,m}_i,t^{c,m}_i],\\
v^e_i -v^c_i=u_{\min}\cdot(t^{e,m}_i-t^{c,m}_i), & \forall~ t\in[t^{c,m}_i,t^{e,m}_i].
\end{array}
\end{equation}
Solving (\ref{lem3proof2}) for $t^{c,m}_i$ and $t^{e,m}_i$, we have
\begin{equation}\label{tc}
t^{c,m}_i=\frac{v^c_i -v^s_i}{u_{{\max}}}+t^{s,m}_i,
\end{equation}
\begin{equation}\label{te}
t^{e,m}_i=\frac{v^c_i -v^s_i}{u_{{\max}}}+\frac{v^e_i-v^c_i}{u_{\min}}+t^{s,m}_i.
\end{equation}
Substituting $t_i^{e,m}$ into $R_{i}^{^m}=t_i^{e,m} - t_i^{s,m}$ yields \eqref{processtimeEq}. 
\end{proof}

\begin{lemma}\label{Lemma-deadline-control-input}
The optimal control input of the deadline problem (Problem \ref{problem2.bDeadline}) for zone $m\in\mathcal{I}_i$, when the state constraint is not active, changes sign at most once, and it is equal to either: (1) $u_i(t) =u_{\min}$, or (2) $u_i(t)=u_{\max}$ , or (3) $u_i(t)=u_{\min}$ and then it switches to $u_i(t)=u_{\max}$.
\end{lemma}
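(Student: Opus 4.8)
The plan is to run the same Hamiltonian (Pontryagin) analysis used for Lemma~\ref{Lemma-oneSP-U} in Appendix~\ref{AA}, adapted to the fact that the deadline problem \eqref{deadlineProblem} is a \emph{maximization} of the traversal time rather than a minimization. First I would record the costate dynamics from \eqref{hamildeadline}: with the speed constraint \eqref{vconstraint} inactive we have $\mu_i^c=\mu_i^d=0$, so the Euler--Lagrange equations give $\dot\lambda_i^p=0$ and $\dot\lambda_i^v=-\lambda_i^p$. Hence $\lambda_i^p$ is a constant and $\lambda_i^v(\cdot)$ is an affine function of time on $[t^{s,m}_i,t^{e,m}_i]$.

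Next I would invoke the minimum principle. Maximizing $t^{e,m}_i-t^{s,m}_i$ is equivalent to minimizing $\int(-1)\,dt$, which is why \eqref{hamildeadline} carries the constant term $-1$; the optimal control minimizes $H_i$. Since $H_i$ is affine in $u_i$ with slope given by the switching function $\phi_i(t):=\lambda_i^v(t)+\mu_i^a-\mu_i^b$, which by complementary slackness reduces to $\lambda_i^v(t)$ whenever $u_i(t)\in(u_{\min},u_{\max})$, the optimal control is bang--bang: $u_i(t)=u_{\max}$ when $\lambda_i^v(t)<0$ and $u_i(t)=u_{\min}$ when $\lambda_i^v(t)>0$. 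A singular arc $\lambda_i^v\equiv 0$ on a subinterval would force $\lambda_i^p=0$ and hence $H_i\equiv -1$, contradicting the free-terminal-time transversality condition below; so $\lambda_i^v$ is a nonconstant affine function, which vanishes at most once. Therefore the sign of $u_i$ changes at most once, yielding option (1), option (2), or a single switch.

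It then remains to fix the \emph{direction} of the switch so as to exclude the pattern $u_{\max}\to u_{\min}$. Because the terminal time $t^{e,m}_i$ is free and $H_i$ has no explicit time dependence, $H_i\equiv 0$ along the optimal trajectory. If a switch occurs at some $t^{c,m}_i$, then $\lambda_i^v(t^{c,m}_i)=0$, and evaluating $H_i\equiv 0$ at that instant gives $-1+\lambda_i^p v_i^c=0$, i.e. $\lambda_i^p=1/v_i^c>0$ (the speed is strictly positive while the state constraint is inactive). Consequently $\dot\lambda_i^v=-\lambda_i^p<0$, so $\lambda_i^v$ is strictly decreasing: it is positive before $t^{c,m}_i$, where $u_i=u_{\min}$, and negative after, where $u_i=u_{\max}$. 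This is exactly option (3), completing the three cases.

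The step I expect to be the main obstacle is not the bang--bang structure itself but the careful justification that we never leave the regime $\mu_i^c=\mu_i^d=0$ --- i.e., that the lemma's standing hypothesis ``the state constraint is not active,'' together with Assumption~\ref{feassibleAssum}, genuinely keeps $\lambda_i^v$ affine over the entire zone --- and the clean invocation of the free-terminal-time transversality condition $H_i\equiv0$, which is what simultaneously excludes singular arcs and pins down the sign $\lambda_i^p>0$ (for which one also needs the speed to stay away from zero at the candidate switch point, immediate if $v_{\min}>0$). The remaining algebra is routine and parallels Lemma~\ref{lemma-speed-position-intermediate} and Proposition~\ref{prop-endtime-intermediateTime}.
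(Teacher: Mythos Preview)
Your proposal is correct and follows essentially the same route as the paper: the paper omits the proof of Lemma~\ref{Lemma-deadline-control-input} with the remark that it mirrors the proof of Lemma~\ref{Lemma-oneSP-U} in Appendix~\ref{AA}, and your argument is precisely that adaptation --- the same costate computation, bang--bang conclusion from affine $\lambda_i^v$, and the transversality condition $H_i\equiv 0$ at the switching time, now with the $-1$ term from \eqref{hamildeadline} flipping the sign so that $\lambda_i^p=1/v_i^c>0$ and the admissible switch is $u_{\min}\to u_{\max}$ rather than the reverse. Your explicit exclusion of singular arcs is a small addition beyond what the paper records for Lemma~\ref{Lemma-oneSP-U}, but it fits the same framework.
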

\begin{proof}
The proof is similar to the proof of Lemma \ref{Lemma-oneSP-U}, and thus, it is omitted.
\end{proof}

\begin{proposition}\label{prop-Deadline}
Let $\mathbf{x}_{i}^{s}=[p^s_i,v^s_i]^\top$ and $\mathbf{x}_{i}^{e}=[p^e_i,v^e_i]^\top$ be the initial and final states of CAV $i\in\mathcal{N}(t)$ traveling in zone $m\in\mathcal{I}_i$. Let $\mathbf{x}_{i}^{c}=[p^c_i,v^c_i]^\top$ be the intermediate state at the time $t^{c,m}_i$ that the control input changes sign. Then, the deadline of CAV $i$ traveling in zone $m\in\mathcal{I}_i$ for the unconstrained case is
\begin{equation}\label{deadline3}
D_{i}^{^m}=\frac{v^c_i -v^s_i}{u_{{\min}}}+\frac{v^e_i-v^c_i}{u_{\max}},
\end{equation}
\begin{equation}\label{deadline2}
v^{c}_i=\sqrt{{v^s_i}^2+2u_{{\min}}\cdot(p_i^c-p_i^s)},
\end{equation}
\begin{equation}\label{deadline1}
p^{c}_i=\frac{{v^e_i}^2-{v^s_i}^2+2(u_{{\min}}p^s_i-u_{\max}p^e_i)}{2(u_{{\max}}-u_{\min})}.
\end{equation}
\end{proposition}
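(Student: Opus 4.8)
The plan is to run the same bang-bang argument that produced the release time in Lemma~\ref{lemma-speed-position-intermediate} and Proposition~\ref{prop-endtime-intermediateTime}, but with the roles of $u_{\min}$ and $u_{\max}$ interchanged, since the deadline problem (Problem~\ref{problem2.bDeadline}) \emph{maximizes} the travel time through zone $m\in\mathcal{I}_i$ rather than minimizing it. By Lemma~\ref{Lemma-deadline-control-input}, the optimal control of Problem~\ref{problem2.bDeadline} is bang-bang with at most one sign change, and the case that actually generates an interior switching state $\mathbf{x}_i^c=[p_i^c,v_i^c]^\top$ is case~(3): $u_i(t)=u_{\min}$ on $[t_i^{s,m},t_i^{c,m}]$ followed by $u_i(t)=u_{\max}$ on $[t_i^{c,m},t_i^{e,m}]$. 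The two pure-arc cases are the degenerate limits in which $\mathbf{x}_i^c$ collapses onto one of the prescribed endpoints, and they are recovered from the same formulas.

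First I would eliminate time on each constant-acceleration arc via $v_i\,dv_i=u_i\,dp_i$ (using \eqref{27a}), obtaining the two work--energy relations ${v_i^c}^2-{v_i^s}^2=2u_{\min}\,(p_i^c-p_i^s)$ and ${v_i^e}^2-{v_i^c}^2=2u_{\max}\,(p_i^e-p_i^c)$, i.e.\ system \eqref{z2} with $u_{\max}$ and $u_{\min}$ swapped. Adding these eliminates ${v_i^c}^2$; since $u_{\max}-u_{\min}>0$, the resulting linear equation determines $p_i^c$ uniquely, which is \eqref{deadline1}, and substituting $p_i^c$ back into the first relation gives \eqref{deadline2}. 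Then, exactly as in the proof of Proposition~\ref{prop-endtime-intermediateTime}, I would integrate $\dot v_i=u_i$ on the two arcs separately to get the arc durations $v_i^c-v_i^s=u_{\min}(t_i^{c,m}-t_i^{s,m})$ and $v_i^e-v_i^c=u_{\max}(t_i^{e,m}-t_i^{c,m})$, solve these for $t_i^{c,m}$ and $t_i^{e,m}$, and substitute into $D_{i}^{^m}=t_i^{e,m}-t_i^{s,m}$ to obtain \eqref{deadline3}.

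I do not expect a genuine obstacle: once Lemma~\ref{Lemma-deadline-control-input} has fixed the $u_{\min}$-then-$u_{\max}$ structure of the optimal control (the mirror image of the $u_{\max}$-then-$u_{\min}$ pattern of Lemma~\ref{Lemma-oneSP-U}), the remainder is the same elementary kinematics as in Lemma~\ref{lemma-speed-position-intermediate} and Proposition~\ref{prop-endtime-intermediateTime} with the two acceleration bounds interchanged. The one point deserving a word of care is feasibility: I would verify that the intermediate state given by \eqref{deadline1}--\eqref{deadline2} is admissible --- the radicand in \eqref{deadline2} is nonnegative and $v_i^c\in[v_{\min},v_{\max}]$ --- which is guaranteed in the present subsection because the speed constraint \eqref{vconstraint} is assumed inactive (consistent with Assumption~\ref{feassibleAssum}). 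This is what makes the constructed two-arc trajectory feasible and ensures the value $D_{i}^{^m}$ it attains is genuinely the maximum rather than merely a stationary point.
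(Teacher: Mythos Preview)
Your proposal is correct and follows essentially the same approach as the paper's own proof: invoke Lemma~\ref{Lemma-deadline-control-input} to fix the $u_{\min}$-then-$u_{\max}$ bang-bang structure, mirror the work--energy relations of Lemma~\ref{lemma-speed-position-intermediate} with the acceleration bounds swapped to obtain \eqref{deadline1}--\eqref{deadline2}, and then integrate $\dot v_i=u_i$ on each arc exactly as in Proposition~\ref{prop-endtime-intermediateTime} to arrive at \eqref{deadline3}. Your added remark on feasibility of the intermediate state is a nice touch that the paper leaves implicit.
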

\begin{proof}
The control input of $i\in\mathcal{N}(t)$ in zone $m\in\mathcal{I}_i$ consists of two arcs, i.e., decelerating with $u_i(t)={u_{{\min}}}$ and accelerating with $u_i(t)={u_{\max}}$. Following similar arguments to Lemma \ref{lemma-speed-position-intermediate}, we derive \eqref{deadline1} and \eqref{deadline2}. The total time traveled inside the zone $m$ can be found by integrating~(\ref{27a}), hence 
\begin{equation}\label{prop-Deadlinesystems}
\begin{array}{ll}
v^c_i -v^s_i=u_{{\min}}\cdot (t^{c,m}_i-t^{s,m}_i), & \forall~ t\in[t^{s,m}_i,t^{c,m}_i],\\
v^e_i -v^c_i=u_{\max}\cdot(t^{e,m}_i-t^{c,m}_i), & \forall~ t\in[t^{c,m}_i,t^{e,m}_i].
\end{array}
\end{equation}
Solving (\ref{prop-Deadlinesystems}) for $t^{c,m}_i$ and $t^{e,m}_i$, we have 
\begin{equation}\label{tcDead}
t^{c,m}_i=\frac{v^c_i -v^s_i}{u_{{\min}}}+t^{s,m}_i,
\end{equation}
\begin{equation}\label{teDead}
t^{e,m}_i=\frac{v^c_i -v^s_i}{u_{{\min}}}+\frac{v^e_i-v^c_i}{u_{\max}}+t^{s,m}_i.
\end{equation}
Substituting $t_i^{e,m}$ into $D_{i}^{^m}=t_i^{e,m} - t_i^{s,m}$ \eqref{deadline3} follows. 
\end{proof}

\subsubsection{State constraints are active}
Next, we consider the cases where the speed constraints become active. 
\begin{theorem}\label{theorm-noactive-withoutSP}
In Problems \ref{problem2} and \ref{problem2.bDeadline}, if there is no change on the sign of the control input, then none of the speed constraints becomes active. 
\end{theorem}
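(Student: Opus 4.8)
The plan is to combine the bang-bang structure established in Lemmas~\ref{Lemma-oneSP-U} and \ref{Lemma-deadline-control-input} with the monotonicity of the induced speed profile. The first step is to observe that, under the hypothesis of the theorem, the optimal control is in fact \emph{constant} on $[t^{s,m}_i,t^{e,m}_i]$. Indeed, by Lemma~\ref{Lemma-oneSP-U} for Problem~\ref{problem2} (resp. Lemma~\ref{Lemma-deadline-control-input} for Problem~\ref{problem2.bDeadline}), on any sub-arc where the speed constraint \eqref{vconstraint} is inactive the optimal control equals one of the extreme values $u_{\min}$ or $u_{\max}$; among the three admissible structures listed in those lemmas, only the mixed two-arc case exhibits a change of sign, so excluding a sign change leaves exactly $u_i(t)\equiv u_{\max}$ or $u_i(t)\equiv u_{\min}$ on the whole interval.

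Given this, I would integrate the dynamics \eqref{27a}: if $u_i\equiv u_{\max}$ then $v_i(t)=v^s_i+u_{\max}(t-t^{s,m}_i)$ is strictly increasing (as $u_{\max}>0$) from $v^s_i$ to $v^e_i$, and if $u_i\equiv u_{\min}$ it is strictly decreasing from $v^s_i$ to $v^e_i$; in either case $v_i(t)$ stays in the closed interval with endpoints $v^s_i$ and $v^e_i$. Since the prescribed boundary speeds are feasible --- strictly interior at the control-zone entry by Assumption~\ref{feassibleAssum}, and equal to $v_{\text{merge}}\in[v_{\min},v_{\max}]$ at the merging-zone boundaries by Assumption~\ref{speedatBndMergingZone} --- it follows that $v_i(t)\in[v_{\min},v_{\max}]$ throughout, and that $v_i$ meets neither $v_{\min}$ nor $v_{\max}$ on any subinterval of positive length; that is exactly the assertion that no speed constraint becomes active. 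Equivalently and more directly: were a speed constraint active on some $[t_1,t_2]$ with $t_1<t_2$, then $\dot v_i\equiv 0$ there and hence $u_i\equiv 0$ there, contradicting the fact just established that $u_i$ is identically the nonzero value $u_{\max}$ or $u_{\min}$. The deadline problem is handled verbatim with Lemma~\ref{Lemma-deadline-control-input} in place of Lemma~\ref{Lemma-oneSP-U}.

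The step I expect to be the main obstacle is the reduction in the first paragraph --- making rigorous that ``no sign change of $u$'' truly precludes a singular ($u\equiv 0$) boundary arc spliced between extreme-valued arcs. One has to use that the control must push the speed toward a bound in order to reach it, and away from it in order to leave it, so that an \emph{interior} boundary arc necessarily forces a reversal of the sign of $u$; and one must then dispose of the degenerate cases in which such an arc abuts $t^{s,m}_i$ or $t^{e,m}_i$ (for instance $v^s_i=v_{\max}$ or $v^e_i=v_{\min}$), as well as fix the convention that a constraint is ``active'' only when it holds with equality on an arc of positive measure rather than at an isolated instant. The interior monotonicity argument is otherwise entirely routine.
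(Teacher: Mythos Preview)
Your proposal is correct and follows essentially the same route as the paper: reduce to the two constant-control cases $u_i\equiv u_{\max}$ or $u_i\equiv u_{\min}$, use the resulting monotonicity of $v_i(t)$ to trap it between the boundary speeds, and invoke Assumptions~\ref{speedatBndMergingZone} and~\ref{feassibleAssum} to conclude those boundary speeds lie strictly inside $(v_{\min},v_{\max})$. The paper simply asserts the two cases without your careful discussion of why a singular arc is excluded, and it omits your alternative contradiction argument, but the core monotonicity-plus-feasible-endpoints idea is identical.
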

\begin{proof}
We consider the two cases that there is no change on the sign of the control input, i.e., case 1: $u_i(t) = u_{\min}$, case 2: $u_i(t) = u_{{\max}}$. 

Case 1: For all $t<t^\prime \in[t^{s,m}_i,t^{e,m}_i]$, we have
\begin{equation}
v_i(t) > v_i(t^\prime).
\end{equation}
Hence, the minimum and maximum speed can only occur at $t^{e,m}_i$ and $t^{s,m}_i$ respectively, namely
\begin{align}
v_i^e \leq v_i(t),&~\forall~t \in[t^{s,m}_i,t^{e,m}_i],\\
v_i(t) \leq v_i^s,&~\forall~t \in[t^{s,m}_i,t^{e,m}_i].
\end{align}
However, from the Assumptions \ref{speedatBndMergingZone} and \ref{feassibleAssum}, we have
\begin{equation}
v_{\min}<v_i^e\leq v_i(t)\leq v_i^s<v_{\max} ,\quad\forall~ t \in[t^{s,m}_i,t^{e,m}_i].\\
\end{equation}

Case 2: Following similar arguments to Case 1, we have
\begin{equation}
v_{\min}<v_i^s\leq v_i(t)\leq v_i^e<v_{\max} ,\quad\forall~ t \in[t^{s,m}_i,t^{e,m}_i].\\
\end{equation}
\end{proof}

\begin{corollary}\label{onlyVmax}
For CAV $i\in\mathcal{N}(t)$ in zone $m\in\mathcal{I}_i$, the unconstrained solution of the release time problem (Problem \ref{problem2}) can not activate the constrained arc ~$v_i(t) = v_{\min}$.\end{corollary}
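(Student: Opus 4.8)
The plan is to show that the time-optimal (release time) control, which by Lemma~\ref{Lemma-oneSP-U} is either a single arc $u_i(t)=u_{\max}$, a single arc $u_i(t)=u_{\min}$, or an $u_{\max}$-then-$u_{\min}$ arc, can never ride the lower speed boundary $v_i(t)=v_{\min}$. First I would dispense with the single-arc cases by invoking Theorem~\ref{theorm-noactive-withoutSP}: if the control does not change sign, none of the speed constraints becomes active, so in particular $v_i(t)=v_{\min}$ is not activated. This reduces the problem to the two-arc case.

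For the two-arc case, the key observation is that the speed profile is piecewise monotone: it is strictly increasing on the first arc (where $u_i(t)=u_{\max}>0$) and strictly decreasing on the second arc (where $u_i(t)=u_{\min}<0$). Hence $v_i(t)$ attains its minimum over $[t^{s,m}_i,t^{e,m}_i]$ only at one of the endpoints, i.e. $\min_t v_i(t)=\min\{v^s_i,v^e_i\}$, and its maximum only at the switching time $t^{c,m}_i$, i.e. $v^c_i=\max_t v_i(t)$. By Assumptions~\ref{speedatBndMergingZone} and \ref{feassibleAssum}, the boundary speeds satisfy $v_{\min}<v^s_i$ and $v_{\min}<v^e_i$ (the endpoint speeds are either $v_{\text{merge}}$ or the feasible, non-active entry speed). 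Therefore $v_i(t)\ge\min\{v^s_i,v^e_i\}>v_{\min}$ for all $t$ in the zone, so the arc $v_i(t)=v_{\min}$ is never activated, which is the claim. One should also note that the switching-point construction of Lemma~\ref{lemma-speed-position-intermediate} only makes sense when $v^c_i$ is real and $v^c_i\ge\max\{v^s_i,v^e_i\}$; if the two-arc solution is geometrically infeasible in the unconstrained sense, then the minimum-time solution is one of the single arcs, and the previous paragraph already applies.

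The main obstacle, and the only subtle point, is justifying that the endpoint speeds are strictly above $v_{\min}$ in every instance to which the corollary is applied. For the entry speed this is immediate from Assumption~\ref{feassibleAssum}. For the exit speed one must argue that the "given" terminal speed of each interior zone in Problem~\ref{problem2} is either $v_{\text{merge}}$ (hence bounded away from $v_{\min}$ by Assumption~\ref{speedatBndMergingZone}) or, for a zone whose exit coincides with the exit of the control zone, a speed that has already been shown feasible; this is precisely the content of the inequality chain $v_{\min}<v^e_i\le v_i(t)\le v^s_i<v_{\max}$ used in the proof of Theorem~\ref{theorm-noactive-withoutSP}, so the corollary follows by the same reasoning with the roles of the monotone arcs swapped. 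I would therefore present the proof as: (i) reduce to the two-arc case via Theorem~\ref{theorm-noactive-withoutSP}; (ii) use monotonicity of each arc to locate the global minimum of $v_i$ at an endpoint; (iii) conclude $v_i(t)\ge\min\{v^s_i,v^e_i\}>v_{\min}$ from the assumptions.
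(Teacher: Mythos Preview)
Your proposal is correct and follows essentially the same approach as the paper's proof: reduce the single-arc cases via Theorem~\ref{theorm-noactive-withoutSP}, then for the two-arc $u_{\max}$-then-$u_{\min}$ case use monotonicity of $v_i$ on each arc to conclude the minimum speed is attained at an endpoint, and finish with Assumptions~\ref{speedatBndMergingZone} and \ref{feassibleAssum}. The paper's version is terser and does not include your side remarks about geometric feasibility of the two-arc solution, but the logical skeleton is identical.
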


\begin{proof}
From Theorem \ref{theorm-noactive-withoutSP}, we know that if there is no change on the sign of the control input, then none of the speed constraints becomes active. Let's consider that the control input changes sign at $t^{c,m}_i \in [t^{s,m}_i,t^{e,m}_i]$, thus
\begin{align}
u_i(t) &= u_{\max}>0 \Rightarrow
v_i^s \leq v_i(t) ,~\forall~ t \in[t^{s,m}_i,t^{c,m}_i],\\
u_i(t) &= u_{\min}<0 \Rightarrow v_i^e \leq v_i(t) ,~\forall~ t \in[t^{c,m}_i,t^{e,m}_i].
\end{align}
It follows that the minimum speed of CAV $i$ for all $t\in[t^{s,m}_i,t^{e,m}_i]$ is either $v_i^e$ or $v_i^s$. 
From the Assumptions \ref{speedatBndMergingZone} and \ref{feassibleAssum}, state constraints are not active at the entry and exit of the zones, and the proof is complete. \end{proof}

One can verify whether the unconstrained solution of CAV $i$ leads to violation of the speed constraint $v_i(t)\leq v_{\max}$ in zone $m$, by checking the speed at the interior point $v^c_i$ found from \eqref{z}.
If the unconstrained solution violates the speed constraint $v_i(t)\leq v_{\max}$, then the solution exits the unconstrained arc at time $\tau_1$, and enters the constrained arc $v_i(t)=v_{\max}$. Then the unconstrained arc is pieced together with the constrained arc $v_i(t)= v_{{\max}}$, and we re-solve the problem with the two arcs pieced together. The two arcs yield a set of algebraic equations that are solved simultaneously using the boundary conditions and interior conditions between the arcs. Since the speed at the boundary of zones do not activate the speed constraint, the solution cannot stay at the constrained arc $v_i(t)= v_{\max}$ and it must exit the constrained arc $v_i(t)= v_{{\max}}$ at time $\tau_2$. The unconstrained and constrained arcs are pieced together, and we re-solve the problem consisting of the three arcs.

\begin{theorem}\label{theorem-endtime-intermediateTime-con}
The release time of CAV $i\in\mathcal{N}(t)$ traveling in zone $m\in\mathcal{I}_i$ when the constraint $v_i(t)=v_{\max}$ is active is
\begin{equation}\label{processtimeEqCons}
 R_{i}^{^m}= \frac{a_i+b_i}{2u_{\min}~u_{{\max}}~v_{{\max}}},
\end{equation}
where
\begin{align}
    a_i &= {v^s_i}^2~u_{{\min}}-{v^e_i}^2~u_{{\max}}+(u_{{\min}}-u_{{\max}})~v_{{\max}}^2,\\
    b_i &= 2u_{{\min}}~u_{{\max}}(p^e_i-p^s_i)+2v_{{\max}}~(v^e_i~u_{{\max}}-v^s_i~u_{{\min}}).
\end{align}

\end{theorem}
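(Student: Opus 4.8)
The plan is to construct the minimum-time trajectory through zone $m$ explicitly from the arc structure already identified in the paragraph preceding the theorem, and then obtain $R_i^{^m}=t_i^{e,m}-t_i^{s,m}$ by integrating the dynamics \eqref{27a} arc by arc. By Lemma \ref{Lemma-oneSP-U}, on any unconstrained portion the optimal control of the release-time problem is bang--bang, switching at most once from $u_{\max}$ to $u_{\min}$; by Corollary \ref{onlyVmax}, the only speed bound it can touch is $v_i(t)=v_{\max}$. Hence, in the case under consideration the trajectory consists of exactly three arcs: (i) an acceleration arc with $u_i(t)=u_{\max}$ on $[t_i^{s,m},\tau_1]$ taking the speed from $v^s_i$ up to $v_{\max}$; (ii) a cruising arc with $u_i(t)=0$, $v_i(t)=v_{\max}$ on $[\tau_1,\tau_2]$; and (iii) a deceleration arc with $u_i(t)=u_{\min}$ on $[\tau_2,t_i^{e,m}]$ taking the speed from $v_{\max}$ down to $v^e_i$. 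Continuity of the state together with the fact that the solution enters (resp. exits) the constrained arc exactly when it reaches (resp. leaves) $v_{\max}$ forces the junction speeds $v_i(\tau_1)=v_i(\tau_2)=v_{\max}$.

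Next I would compute the duration and the distance traversed on each arc. Integrating $\dot v_i=u_i$ on arcs (i) and (iii) gives durations $(v_{\max}-v^s_i)/u_{\max}$ and $(v^e_i-v_{\max})/u_{\min}$, both nonnegative since $u_{\min}<0$ and $v^s_i,v^e_i\le v_{\max}$. The kinematic identity $v^2-v_0^2=2u\,\Delta p$ on the same two arcs gives traversed distances $(v_{\max}^2-{v^s_i}^2)/(2u_{\max})$ and $({v^e_i}^2-v_{\max}^2)/(2u_{\min})$. Since the total distance through zone $m$ equals $p^e_i-p^s_i$ and the speed is constant on arc (ii), the duration of arc (ii) is the leftover distance divided by $v_{\max}$, and summing the three durations yields
\begin{equation}
R_i^{^m}=\frac{v_{\max}-v^s_i}{u_{\max}}+\frac{v^e_i-v_{\max}}{u_{\min}}+\frac{1}{v_{\max}}\left[(p^e_i-p^s_i)-\frac{v_{\max}^2-{v^s_i}^2}{2u_{\max}}-\frac{{v^e_i}^2-v_{\max}^2}{2u_{\min}}\right].
\end{equation}

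Finally I would place this expression over the common denominator $2u_{\min}u_{\max}v_{\max}$ and collect terms by grouping the coefficients of ${v^s_i}^2$, ${v^e_i}^2$, $v_{\max}^2$, $v_{\max}v^s_i$, $v_{\max}v^e_i$, and $(p^e_i-p^s_i)$; these regroup precisely into the quantities $a_i$ and $b_i$ in the statement, which gives \eqref{processtimeEqCons}. The only real work is this algebraic consolidation, which is elementary; the one modeling point deserving an explicit sentence is the justification of the three-arc structure and the impossibility of a $v_{\min}$ arc, which is exactly Corollary \ref{onlyVmax}. I do not expect a genuine obstacle here --- the main risk is a sign slip from $u_{\min}<0$, so I would track signs carefully through the arc-duration and arc-distance formulas.
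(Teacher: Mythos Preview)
Your proposal is correct and follows essentially the same route as the paper's proof in Appendix~\ref{B}: both identify the three-arc structure $u_{\max}\to v_{\max}\to u_{\min}$, impose continuity of the state at the junctions $\tau_1,\tau_2$, and solve for $t_i^{e,m}-t_i^{s,m}$ from the resulting kinematic relations. The only cosmetic difference is that the paper writes out the full position/velocity trajectories on each arc and solves the ensuing system, whereas you use the shortcut identities $v^2-v_0^2=2u\,\Delta p$ directly; after placing everything over $2u_{\min}u_{\max}v_{\max}$ the two computations coincide.
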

\begin{proof}
See Appendix \ref{B}.
\end{proof}

\begin{remark}
Similar to Corollary \ref{onlyVmax}, for CAV $i\in\mathcal{N}(t)$ in zone $m\in\mathcal{I}_i$, the unconstrained solution of the deadline problem (Problem \ref{problem2.bDeadline}) can not activate the constrained arc ~$v_i(t) = v_{\max}$.
\end{remark}

\begin{proposition}\label{prop-DeadlineConstrained}
Let $\tau_1$ and $\tau_2$ be the time that CAV $i\in\mathcal{N}(t)$ enters the constrained arc $v_i(t) = v_{\min}$, while it is in zone $m\in\mathcal{I}_i$. Let $[ p^{s}_i, v^{s}_i]^\top$ and $[p^{e}_i, v^{e}_i]^\top$ be the initial and final states of CAV $i$ in zone $m$ respectively. Then, the deadline of CAV $i$ to exit zone $m$ is 
\begin{equation}\label{deadlineConstrained}
    D_i^m = \frac{v^{e}_i- v_{{\min}}}{u_{{\max}}} + \tau_2 - t_i^{s,m},
\end{equation}
where 
\begin{align}
\tau_2 &= \frac{p_i(\tau_2)-p_i(\tau_1)}{v_{{\min}}} + \tau_1,\\
  p_i(\tau_2) &= \frac{v_{{\min}}^2-{v^{e}_i}^2}{2~u_{{\max}}}+p^{e}_i,\\
 p_i(\tau_1) &= \frac{v_{{\min}}^2-{v^{s}_i}^2}{2~u_{{\min}}}+p^{s}_i,\\
\tau_1 &= \frac{-v^{s}_i+ v_{{\min}}}{u_{{\min}}} + t_i^{s,m}.
\end{align}
\end{proposition}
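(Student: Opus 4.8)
The plan is to follow the same piecewise-arc construction used for the constrained release-time problem (Theorem~\ref{theorem-endtime-intermediateTime-con}), with the roles of $v_{\max}$ and $v_{\min}$, and of $u_{\max}$ and $u_{\min}$, interchanged. First I would pin down the structure of the optimal control. By Lemma~\ref{Lemma-deadline-control-input}, the time-maximizing control for the unconstrained deadline problem is $u_i(t)=u_{\min}$ on an initial arc followed by $u_i(t)=u_{\max}$ on a terminal arc (the degenerate single-arc cases are already covered by Proposition~\ref{prop-Deadline}). When this unconstrained trajectory would drive the speed below $v_{\min}$ at its interior value $v^c_i$ obtained from \eqref{deadline2}, the trajectory must leave the decelerating arc at some time $\tau_1$, ride the constrained arc $v_i(t)=v_{\min}$ (equivalently $u_i(t)=0$) on $[\tau_1,\tau_2]$, and then rejoin the accelerating arc; it cannot stay on $v_i(t)=v_{\min}$ all the way to the exit because, by Assumption~\ref{feassibleAssum}, $v^{e}_i>v_{\min}$, which forces a final $u_i(t)=u_{\max}$ arc. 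Hence the optimal trajectory consists of exactly three arcs in the order: decelerate at $u_{\min}$, cruise at $v_{\min}$, accelerate at $u_{\max}$.

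Next I would integrate the dynamics \eqref{27a} on each arc. On $[t_i^{s,m},\tau_1]$ the speed decreases linearly from $v^{s}_i$ to $v_{\min}$, which gives $\tau_1 - t_i^{s,m} = (v_{\min}-v^{s}_i)/u_{\min}$, i.e. the stated expression for $\tau_1$; eliminating time through $v_{\min}^2-{v^{s}_i}^2 = 2u_{\min}\,(p_i(\tau_1)-p^{s}_i)$ yields the stated $p_i(\tau_1)$. Symmetrically, on the terminal arc $[\tau_2,t_i^{e,m}]$ the speed increases linearly from $v_{\min}$ to $v^{e}_i$, so that arc has duration $(v^{e}_i-v_{\min})/u_{\max}$, and ${v^{e}_i}^2-v_{\min}^2 = 2u_{\max}\,(p^{e}_i-p_i(\tau_2))$ gives the stated $p_i(\tau_2)$. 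On the middle arc the speed is constant at $v_{\min}$, so $p_i(\tau_2)-p_i(\tau_1) = v_{\min}(\tau_2-\tau_1)$, which is the stated expression for $\tau_2$ once $\tau_1$, $p_i(\tau_1)$, and $p_i(\tau_2)$ are substituted.

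Finally I would assemble the deadline as $D_i^m = t_i^{e,m} - t_i^{s,m} = (t_i^{e,m}-\tau_2) + (\tau_2 - t_i^{s,m})$, using that the terminal arc has length $(v^{e}_i - v_{\min})/u_{\max}$; this reproduces \eqref{deadlineConstrained} exactly. The only delicate point is the first paragraph — rigorously justifying that the constrained arc $v_i(t)=v_{\min}$ is inserted between the two unconstrained arcs and is traversed exactly once — but this is the direct analogue of the discussion preceding Theorem~\ref{theorem-endtime-intermediateTime-con} together with the argument in Appendix~\ref{B}, and can be imported essentially verbatim. Everything after that is routine integration of a double integrator over intervals of constant acceleration.
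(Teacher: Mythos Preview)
Your proposal is correct and follows exactly the approach the paper intends: the paper's own proof simply states that it is similar to the proof of Theorem~\ref{theorem-endtime-intermediateTime-con} and is omitted, and you have carried out precisely that mirror-image argument (swapping $u_{\min}\leftrightarrow u_{\max}$ and $v_{\max}\leftrightarrow v_{\min}$) by integrating \eqref{27a} along the three arcs and piecing them together via continuity of the states. The only small refinement is that the feasibility of the boundary speeds (ensuring $v^{s}_i,v^{e}_i>v_{\min}$) relies on Assumption~\ref{speedatBndMergingZone} as well as Assumption~\ref{feassibleAssum}, just as in the proofs of Theorem~\ref{theorm-noactive-withoutSP} and Corollary~\ref{onlyVmax}.
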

\begin{proof}
The proof is similar to the proof of Theorem \ref{theorem-endtime-intermediateTime-con}, and thus, it is omitted.
\end{proof}

\subsection{Solution of the scheduling problem (Problem \ref{Problem1})}
As we described earlier, at the entry of the control zone, CAV $i\in\mathcal{N}(t)$ computes the release time and deadline for each zone $m\in\mathcal{I}_i$. Then, it solves the scheduling problem (Problem \ref{Problem1}), the solution of which determines the schedule tuple $\mathcal{T}_i$ (Definition \ref{DEF: scheduling tuple}) aimed at minimizing the time $t_i^f$ that $i$ exits the control zone.   
In the scheduling problem (Problem \ref{Problem1}) of CAV $i$, for each zone $m$ which belongs to the conflict set $\mathcal{C}_{i,j}$ where $j<i \in\mathcal{N}(t)$, we impose the safety constraint \eqref{Schedulecons2} (Definition \ref{safetyConst}) stated as  
\begin{equation}\label{disjunctive constraint}
  T_i^{^{m}}-T_j^{^m} \geq h,
\end{equation}
OR \begin{equation}\label{disjunctive constraint2}
  -(T_i^{^{m}}-T_j^{^m}) \geq h,
\end{equation}
which is a disjunctive constraint due to the OR statement, and also determines the order of entry at zone $m$. By introducing a binary variable $B^{^{m}}_{i,j}\in \{0,1\}$ and big number $M\in\mathbb{R}^{+}$ \cite{grossmann2012generalized}, we rewrite the disjunctive constraints \eqref{Schedulecons2} as two separate constraints as following
\begin{align}\label{disjunctive constraint3}
  (T_i^{^{m}}-T_j^{^m}) + B^{^{m}}_{i,j} \cdot M &\geq h,\\
  -(T_i^{^{m}}-T_j^{^m}) + (1-B^{^{m}}_{i,j}) \cdot M &\geq h.
\end{align}
However, there are two cases to consider in handling the safety constraint; case 1: CAV $i$ and CAV $j$ follow the same path, and case 2: CAV $i$ and CAV $j$ have different paths that merges together.
In case 1, we have $\mathcal{C}_{i,j}=\mathcal{I}_i=\mathcal{I}_j$ implying that they conflict on each zone of their path. Since CAV $i$ entered the control zone later, it cannot arrive at any zone earlier than CAV $j$, thus, we have $B_{i,j}^m=0$ for all $m\in\mathcal{C}_{i,j}$.
For the second case, let $\mathcal{C}_{i,j}=\{m_a,\dots,m_b\}$ be the conflict set between CAV $i$ and $j$, which have merging paths, $m_a$ be the first zone that there is a potential lateral collision, and $m_b$ be their last conflict zone with the potential rear-end collision. Since we relaxed the FIFO queuing policy, CAV $i$ can arrive at zone $m_1$ either before ($B_{i,j}^{m_1}=1$), or after CAV $j$ ($B_{i,j}^{m_1}=0$). However, to ensure the absence of the rear-end safety in the following zones after $m_a$, we need to have all the binary variables equal to $B_{i,j}^{m_a}$, i.e.,
\begin{align}\label{disjunctive constraint4}
B_{i,j}^{m_a} = \dots = B_{i,j}^{m_b},\quad B_{i,j}^{m_a}\in \{0,1 \}.
\end{align}

In addition, the arrival time at each zone $m$ is lower bounded with the arrival time and release time, and upper-bounded with arrival time and deadline of the previous zone $\underaccent{\bar}{m}$. Similarly, the exit time from the control zone $t_i^f$ is bounded by the arrival time, release time, and deadline of the last zone.

After transforming each safety constraint to two separate constraints augmented with a binary variable, we use a mixed-integer linear program (MILP) (IBM ILOG CPLEX \cite{IBMILOg}) to solve the scheduling problem. We  discuss the implications on computation effort of solving the MILP in Section \Rmnum{4}.

\subsection{Analytical solution of the energy minimization problem (Problem \ref{problem3}) }
One approach to address the inequality constraints, which are a function of state variables, is adjoining the $q$th-order state variable inequality constraint to the Hamiltonian function. The $q$th-order state variable inequality constraint can be found by taking the successive total time derivative of constraint and substitute \eqref{27a} for $\dot{\mathbf{x}}$, until we obtain an expression that is explicitly dependent on the control variable \cite{bryson1975applied}. 
For each CAV $i\in\mathcal{N}(t)$, with CAV $k\in\mathcal{N}(t)$ positioned immediately in front of it, the Hamiltonian is
\begin{gather}\label{c1}
H_i(t,p_i(t),v_i(t),u_i(t),\lambda(t))=\frac{1}{2}u_i(t)^2+\lambda_i^p v_i (t)+\lambda_i^v u_i(t)\nonumber\\
+\mu_i^a(u_i (t)-u_{{\max}})+\mu_i^b(u_{\min}-u_i(t))\nonumber\\
+\mu_i^c(u_i(t))+\mu_i^d(-u_i(t))\nonumber\\
+\mu_i^s(v_i(t)-v_k(t)+\varphi u_i(t)),
\end{gather}
where \(\lambda_i^p\) and \(\lambda_i^v\) are the costates, and $\mu^\top$ is a vector of Lagrange multipliers.
\\
\subsubsection{State and control constraints are not active}
If the state and control constraints are not active, \(\mu_i^a=\mu_i^b=\mu_i^c=\mu_i^d=\mu_i^s=0\) and from \cite{Malikopoulos2017} the solution is
\begin{equation}\label{27}
 u_i^{\ast}=a_it+b_i,
\end{equation}
by substituting \eqref{27} in \eqref{27a} we have
\begin{align}\label{27b}
&v_i^{\ast}=\frac{1}{2}a_it^2+b_it+c_i,\\
&p_i^{\ast}=\frac{1}{6}a_it^3+\frac{1}{2}b_it^2+c_it+d_i.
\end{align}
In the above equations \(a_i,b_i,c_i,d_i\) are constants of integration, which are found by substituting the initial and final states $p_i(T_i^{^{m}}), v_i(T_i^{^{m}})$, $p_i(T_i^{^{\underaccent{}{\bar{m}}}})$ and $v_i(T_i^{^{\underaccent{}{\bar{m}}}})$ in zone $m\in\mathcal{I}_i$ .
Thus, a system of equations in the form of \(\textbf{T}_i \textbf{b}_i=\textbf{q}_i\), is
\begin{equation}\label{energy-optimal-matrix}
  \begin{bmatrix}
   \frac{1}{6}\left(T_i^{^{m}}\right)^3&
   \frac{1}{2}(T_i^{^{m}})^2&
   T_i^{^{m}}&
   1\\
   \\
   \dfrac{1}{2}\left(T_i^{^{m}}\right)^2&
   T_i^{^{m}}&
   1&
   0\\
   \\
   \dfrac{1}{6}\left(T_i^{^{\underaccent{}{\bar{m}}}}\right)^3&
   \dfrac{1}{2}\left(T_i^{^{\underaccent{}{\bar{m}}}}\right)^2&
   T_i^{^{\underaccent{}{\bar{m}}}}&
   1\\
   \\
   \dfrac{1}{2}\left(T_i^{^{\underaccent{}{\bar{m}}}}\right)^2&
   T_i^{^{\underaccent{}{\bar{m}}}}&
   1&
   0\\
   \end{bmatrix}
   \cdot
     \begin{bmatrix}
   a_i\\
   b_i\\
   c_i\\
   d_i
   \end{bmatrix}
   =
     \begin{bmatrix}
   p_i(T_i^{^{m}})\\
   v_i(T_i^{^{m}})\\
   p_i(T_i^{^{\bar{m}}})\\
   v_i(T_i^{^{\underaccent{}{\bar{m}}}})
   \end{bmatrix}.
\end{equation}
Note that since (\ref{energy-optimal-matrix}) can be computed
online, the controller may re-evaluate the four constants at any time $t\in[T_i^{^{m}},T_i^{^{\underaccent{}{\bar{m}}}}]$ and update (\ref{27}).

There are different cases that can happen and activate either the state or control constraints. Next, we consider the cases that CAV only travels on the constrained arcs except the rear-end safety \eqref{RearEndCons}.
\subsubsection{CAV travels on different constrained arcs}

\textbf{Case 1:}  CAV $i\in\mathcal{N}(t)$ enters the constrained arc $u_i(t)=u_{{\max}}$ at $T_i^{^{m}}$. Then, it moves to the constrained arc $u_i(t)=u_{{\min}}$ at $\tau_1$ and stays on it until $T_i^{^{\underaccent{}{\bar{m}}}}$.
\begin{theorem}\label{unconst-control-constraint becomes active}
Let $T_i^{^{m}}$ and $T_i^{^{\underaccent{}{\bar{m}}}}$ be the schedules of CAV $i\in\mathcal{N}(t)$ for zones $m , \underaccent{}{\bar{m}} \in\mathcal{I}_i$ respectively, where zone ${\underaccent{}{\bar{m}}}$ is right after $m$. In zone $m$, the CAV $i$ first enters the constrained arc $u_i(t)=u_{{\max}}$ and then the arc $u_i(t)=u_{{\min}}$, if the speed constraint does not become active in zone $m$ and 
\begin{equation}\label{schedule=release}
   T_i^{^{\underaccent{}{\bar{m}}}}-T_i^{^{m}}=R_i^{^m}.
\end{equation}
\end{theorem}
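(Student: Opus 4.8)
The plan is to convert the hypothesis $T_i^{\underaccent{}{\bar{m}}}-T_i^{m}=R_i^{m}$ into a rigidity statement on the admissible controls of the low-level problem. Recall that, by the definition of Problem~\ref{problem2}, $R_i^{m}$ is the \emph{shortest} feasible time in which CAV~$i$ can move through zone~$m$ from the prescribed state $(p_i^s,v_i^s)$ to the prescribed state $(p_i^e,v_i^e)$ under the dynamics~\eqref{27a}, the control bounds~\eqref{uconstraint} and the speed bounds~\eqref{vconstraint}. The feasible set of the energy problem (Problem~\ref{problem3}) on zone~$m$ is obtained from that of Problem~\ref{problem2} by fixing the traversal time to $T_i^{\underaccent{}{\bar{m}}}-T_i^{m}$ and adjoining the rear-end constraint~\eqref{RearEndCons}; hence, when $T_i^{\underaccent{}{\bar{m}}}-T_i^{m}=R_i^{m}$, any control admissible for Problem~\ref{problem3} is also admissible for Problem~\ref{problem2} with final time $T_i^{\underaccent{}{\bar{m}}}$ and therefore attains the optimal value $R_i^{m}$ of Problem~\ref{problem2}. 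Consequently the energy-optimal control $u_i^\ast(\cdot)$ of Problem~\ref{problem3} is itself a time-optimal control for Problem~\ref{problem2}.

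Next I would invoke the structural results already established. Since the theorem assumes the speed constraint~\eqref{vconstraint} does not become active in zone~$m$, Lemma~\ref{Lemma-oneSP-U} applies to $u_i^\ast$: it is bang-bang with at most one sign change and belongs to one of the three types $u_i\equiv u_{\min}$, $u_i\equiv u_{\max}$, or $u_i=u_{\max}$ on $[T_i^{m},\tau_1]$ followed by $u_i=u_{\min}$ on $[\tau_1,T_i^{\underaccent{}{\bar{m}}}]$. Which type occurs is determined entirely by the boundary data: by Lemma~\ref{lemma-speed-position-intermediate} the switching state $\mathbf{x}_i^c=[p_i^c,v_i^c]^\top$ is fixed by~\eqref{zz}--\eqref{z}, so the single-switch profile is unique and the three types are mutually exclusive. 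The first two are precisely the degenerate instances in which $\mathbf{x}_i^c$ collapses onto $\mathbf{x}_i^s$ or onto $\mathbf{x}_i^e$, i.e. when ${v_i^e}^2-{v_i^s}^2$ equals $2u_{\min}(p_i^e-p_i^s)$ or $2u_{\max}(p_i^e-p_i^s)$, which should be read as limiting cases of the asserted $u_{\max}$-then-$u_{\min}$ pattern with one arc of zero duration; in the generic case this is exactly the claim, with $\tau_1=T_i^{m}+(v_i^c-v_i^s)/u_{\max}$ as in~\eqref{tc}.

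The step I expect to demand the most care is closing the loop between the two levels, since the argument above presumes Problem~\ref{problem3} is feasible when $T_i^{\underaccent{}{\bar{m}}}-T_i^{m}=R_i^{m}$, while the release time of Problem~\ref{problem2} is computed \emph{without} the rear-end constraint. One must therefore check that the time-optimal $u_{\max}$-then-$u_{\min}$ trajectory does not violate~\eqref{RearEndCons} against the preceding CAV $k\in\mathcal{N}(t)$ for \emph{every} $t\in[T_i^{m},T_i^{\underaccent{}{\bar{m}}}]$, not just at the zone entry. This should follow from the way the schedule tuple $\mathcal{T}_i$ is produced by Problem~\ref{Problem1}: the constraints~\eqref{Schedulecons1}--\eqref{Schedulecons2} together with $h\in\Gamma_i$ (Definition~\ref{gamma}) are chosen so that the admitted headway is consistent with a collision-free trajectory, and one then needs the additional observation that $d^m(p_k(t),p_i(t))$ evolves monotonically enough under the two vehicles' control profiles that the endpoint bound propagates to the interior. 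Making that propagation precise, and ruling out the pathological possibility that no admissible low-level control exists for such a schedule (which would contradict the upper level having returned a tuple satisfying~\eqref{Schedulecons1}), is the only non-routine part; everything else is a direct appeal to Lemma~\ref{Lemma-oneSP-U} and Lemma~\ref{lemma-speed-position-intermediate}.
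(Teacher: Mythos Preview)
Your approach is essentially the same as the paper's: both argue that when $T_i^{\bar m}-T_i^{m}=R_i^{m}$ the traversal time equals the minimum feasible time, so the low-level control must coincide with the time-optimal control of Problem~\ref{problem2}, and then Lemma~\ref{Lemma-oneSP-U} (with the speed constraint inactive) yields the $u_{\max}$-then-$u_{\min}$ structure. Your treatment is in fact more careful than the paper's very short proof, which simply asserts the equivalence without addressing the rear-end constraint~\eqref{RearEndCons}; the feasibility concern you flag in your third paragraph is legitimate but is silently omitted in the paper.
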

\begin{proof}
Let CAV $i\in \mathcal{N}(t)$ enter and and exit the zone $m\in\mathcal{I}_i$ at $T_i^{^{m}}$  and $T^{^{m^\prime}}_i$ respectively. Substituting \eqref{exit=enter+1} into $T_i^{^{\underaccent{}{\bar{m}}}}-T_i^{^{m}}=R_i^{^m}$, we have $T^{^{m^\prime}}_i-T_i^{^{m}}=R_i^{^m}$.
Thus, CAV $i$ is traveling at the earliest feasible time in zone $m$. Since the speed constraint is not active, the solution is equivalent to the solution of the release time problem (Problem \ref{problem2}) when the speed constraint is not active. 
\end{proof}

In the above case, the optimal control input is 
\begin{equation}\label{Umax-Umin}
 u^*_i(t)=\left\{ \begin{array}{ll}
u_{\max}&, \mbox{if}\quad T_i^{^{m}}\leq t<\tau_1\\
u_{\min} &, \mbox{if}\quad \tau_1\leq t\leq T_i^{^{\underaccent{}{\bar{m}}}}\\
           \end{array}. \right. 
\end{equation}
Substituting (\ref{Umax-Umin}) in (\ref{27a}), we have 
\begin{align}
p_i^{\ast}(t)=&\frac{1}{2}u_{{\max}}t^2+b_it+c_i, \nonumber \\
v_i^{\ast}(t)=&u_{{\max}}t+b_i , & \forall~ t\in [T_i^{^{m}},\tau_1^-],\\
p_i^{\ast}(t)=&\frac{1}{2}u_{{\min}}t^2+d_it+e_i, \nonumber \\
v_i^{\ast}(t)=&u_{{\min}}t+d_i ,& \forall~ t\in [\tau_1^+,T_i^{^{\underaccent{}{\bar{m}}}}],
\end{align}
where $b_i,c_i,d_i$ and $e_i$ are constants of integration, which are found by using initial conditions $p_i(T_i^{^{m}}), v_i(T_i^{^{m}})$ and final conditions $p_i(T_i^{^{\underaccent{}{\bar{m}}}}), v_i(T_i^{^{\underaccent{}{\bar{m}}}})$ of the CAV in zone $m\in\mathcal{I}_i$. The switching point $\tau_1$ can be found from \eqref{tc}.

\textbf{Case 2:}  CAV $i$ enters the constrained arc  $u_i(t)=u_{{\max}}$ at $T_i^{^{m}}$, then it moves to the constrained arc $v_i(t)=v_{{\max}}$ at $t=\tau_1$. It exits the constrained arc $v_i(t)=v_{{\max}}$ at $t=\tau_2$, and it enters the constrained arc $u_i(t)=u_{{\min}}$ and stays on it until $T_i^{^{\underaccent{}{\bar{m}}}}$.

\begin{corollary}\label{corol.const-control-constraint becomes active}
Case $2$ is realized for CAV $i\in \mathcal{N}(t)$ travelling in zone $m\in\mathcal{I}_i$, if the speed constraint becomes active and \eqref{schedule=release} holds. 
\end{corollary}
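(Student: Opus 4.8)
The plan is to mirror the argument used for Theorem~\ref{unconst-control-constraint becomes active}, simply replacing its appeal to the \emph{unconstrained} release-time solution with the \emph{constrained} one already characterized in Corollary~\ref{onlyVmax} and Theorem~\ref{theorem-endtime-intermediateTime-con}.

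First I would use \eqref{exit=enter+1} to rewrite the hypothesis \eqref{schedule=release} as $T^{^{m^\prime}}_i - T_i^{^{m}} = R_i^{^m}$, i.e.\ the time allotted to CAV $i$ for crossing zone $m$ equals the release time $R_i^{^m}$, which by Problem~\ref{problem2} is the shortest feasible traversal time of zone $m$ given the fixed boundary data $p^s_i,v^s_i,p^e_i,v^e_i$. Since $R_i^{^m}$ is this infimum, the only trajectory admissible under the dynamics \eqref{27a} and the bounds \eqref{uconstraint}, \eqref{vconstraint} that crosses zone $m$ in exactly $R_i^{^m}$ units of time is the time-optimal one; in particular it is the \emph{only} feasible trajectory for the low-level problem (Problem~\ref{problem3}) on $[T_i^{^{m}},T_i^{^{\underaccent{}{\bar{m}}}}]$, so it is automatically its energy-optimal solution and the rear-end constraint \eqref{RearEndCons} cannot alter it.

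Next I would invoke the structural facts already proved for this time-optimal trajectory. By hypothesis the speed constraint becomes active; by Corollary~\ref{onlyVmax} the only constrained speed arc the release-time solution can activate is $v_i(t)=v_{\max}$, never $v_i(t)=v_{\min}$. Combining this with the bang-bang structure of Lemma~\ref{Lemma-oneSP-U} and the arc-piecing construction preceding Theorem~\ref{theorem-endtime-intermediateTime-con} (start on the $u_{\max}$ arc, reach $v_{\max}$ at $\tau_1$, leave it at $\tau_2$ since the boundary speeds do not saturate, finish on the $u_{\min}$ arc), the optimal control is exactly $u_{\max}$ on $[T_i^{^{m}},\tau_1)$, then $u_i\equiv 0$ (equivalently $v_i\equiv v_{\max}$) on $[\tau_1,\tau_2]$, then $u_{\min}$ on $(\tau_2,T_i^{^{\underaccent{}{\bar{m}}}}]$. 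That is precisely the switching pattern defining Case~2, which is what we wanted to show.

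The main obstacle is the uniqueness claim in the first step: one must argue carefully that attaining the minimum feasible traversal time forces the trajectory to coincide with the time-optimal one, leaving no room either for the rear-end constraint \eqref{RearEndCons} or for an alternative switching pattern. I expect this to follow from the strict monotonicity of travel time in the ``amount of acceleration used,'' which is essentially what the proofs of Lemma~\ref{Lemma-oneSP-U} and Theorem~\ref{theorm-noactive-withoutSP} already establish, but it is the step needing the most care; the remaining bookkeeping (locating $\tau_1,\tau_2$ via \eqref{z} and the boundary/interior matching conditions) is routine and, as with Theorem~\ref{theorem-endtime-intermediateTime-con}, can be relegated to the appendix.
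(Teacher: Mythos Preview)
Your proposal is correct and follows essentially the same route as the paper: the paper simply states that the proof is similar to that of Theorem~\ref{unconst-control-constraint becomes active} and omits it, and your argument is exactly the natural expansion of that---rewrite \eqref{schedule=release} via \eqref{exit=enter+1}, identify the traversal time with $R_i^{^m}$ so the low-level trajectory must coincide with the release-time solution, and then, since the speed constraint is active, invoke Corollary~\ref{onlyVmax} and the three-arc structure underlying Theorem~\ref{theorem-endtime-intermediateTime-con} to obtain the $u_{\max}\to v_{\max}\to u_{\min}$ pattern of Case~2. Your added caution about uniqueness and the rear-end constraint goes beyond what the paper spells out, but it is consistent with (and more careful than) the paper's terse treatment.
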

\begin{proof}
The proof is similar to the proof of Theorem \ref{unconst-control-constraint becomes active}, and thus, it is omitted.
\end{proof}

In this case, the optimal control input is \begin{gather}\label{Umax-Vmax-Umin}
 u^*_i(t)=\left\{ \begin{array}{ll}
u_{\max}&, \mbox{if}\quad T_i^{^{m}}\leq t<\tau_1\\
0 &, \mbox{if}\quad \tau_1\leq t\leq \tau_2\\
u_{\min} &, \mbox{if}\quad \tau_2\leq t\leq T_i^{^{\underaccent{}{\bar{m}}}}\\
           \end{array}. \right. 
\end{gather}
Substituting (\ref{Umax-Vmax-Umin}) in (\ref{27a}), we have
\begin{align}
p_i^{\ast}(t)=&\frac{1}{2}u_{{\max}}t^2+b_it+c_i, \nonumber \\
v_i^{\ast}(t)=&u_{{\max}}t+b_i , &\forall~ t\in [T_i^{^{m}},\tau_1^-],\\
p_i^{\ast}(t)=&v_{{\max}}t+d_i, \nonumber \\
v_i^{\ast}(t)=&v_{{\max}}, &\forall~ t\in [\tau_1^+,\tau_2^-],\\
p_i^{\ast}(t)=&\frac{1}{2}u_{{\min}}t^2+e_it+f_i, \nonumber \\
v_i^{\ast}(t)=&u_{{\min}}t+e_i , &\forall~ t\in [\tau_2^+,T_i^{^{\underaccent{}{\bar{m}}}}],
\end{align}
where $b_i,c_i,d_i,e_i$ and $f_i$ are constants of integration, and time $\tau_1$ and $\tau_2$ are times that we move from one arc to another arc, which are found by using initial and final conditions in zone $m$ and continuity of states at $\tau_1$ and $\tau_2$.

\textbf{Case 3:} CAV $i\in\mathcal{N}(t)$ enters the constrained arc $u_i(t)=u_{{\min}}$ at $T_i^{^{m}}$. Then, it moves to the constrained arc $u_i(t)=u_{{\max}}$ at $\tau_1$ and stays on it until $T_i^{^{\underaccent{}{\bar{m}}}}$.
\begin{theorem}\label{deadline-Unconst-constraint becomes active}
Let $T_i^{^{m}}$ and $T_i^{^{\underaccent{}{\bar{m}}}}$ be the schedules of CAV $i\in\mathcal{N}(t)$ for zones $m , \underaccent{}{\bar{m}} \in\mathcal{I}_i$ respectively, where zone ${\underaccent{}{\bar{m}}}$ is right after zone $m$. In zone $m$, CAV $i$ first enters the constrained arc $u_i(t)=u_{{\min}}$, and then the constrained arc $u_i(t)=u_{{\max}}$, if the speed constraint does not become active in zone $m$ and 
\begin{equation}\label{schedule=deadline}
   T_i^{^{\underaccent{}{\bar{m}}}}-T_i^{^{m}}=D_i^m.
\end{equation}
\end{theorem}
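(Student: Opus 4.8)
The plan is to mirror the proof of Theorem~\ref{unconst-control-constraint becomes active}, replacing the role of the release time by that of the deadline and the role of Lemma~\ref{Lemma-oneSP-U} by that of Lemma~\ref{Lemma-deadline-control-input}. The whole argument rests on the observation that $D_i^m$ is, by construction (Problem~\ref{problem2.bDeadline}), the \emph{largest} travel time through zone $m$ compatible with the dynamics~\eqref{27a} and the bounds~\eqref{uconstraint}--\eqref{vconstraint}, so a CAV consuming exactly $D_i^m$ in zone $m$ has essentially no freedom left.

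First I would use the Remark around~\eqref{exit=enter+1} to restate the hypothesis~\eqref{schedule=deadline}: since $T^{m'}_i=T_i^{\underaccent{}{\bar{m}}}$, the condition $T_i^{\underaccent{}{\bar{m}}}-T_i^{m}=D_i^m$ is precisely $T^{m'}_i-T_i^{m}=D_i^m$, i.e. CAV $i$ spends the entire deadline crossing zone $m$ with the given endpoint states $[p^s_i,v^s_i]^\top$ and $[p^e_i,v^e_i]^\top$. Consequently any admissible trajectory realizing this crossing is an optimal solution of the deadline problem (Problem~\ref{problem2.bDeadline}) on zone $m$. By Assumption~\ref{feassibleAssum} the endpoint speeds are strictly interior, and by the Remark following Corollary~\ref{onlyVmax} the deadline solution never activates $v_i(t)=v_{\max}$; together with the standing hypothesis that no speed bound becomes active in zone $m$, this places us in the state-unconstrained regime of Lemma~\ref{Lemma-deadline-control-input}, so the maximizing control is bang-bang with at most one sign change.

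Next I would rule out the other structures allowed by Lemma~\ref{Lemma-deadline-control-input}: a constant $u_i\equiv u_{\max}$ or $u_i\equiv u_{\min}$ arc is incompatible with generic endpoint data, and, more importantly, a leading $u_{\max}$ arc would make the vehicle faster than the $u_{\min}$-first profile on an initial sub-interval and hence reach $p_i^e$ sooner, contradicting maximality of the travel time; so the structure must be $u_i(t)=u_{\min}$ on $[T_i^{m},\tau_1)$ followed by $u_i(t)=u_{\max}$ on $[\tau_1,T_i^{\underaccent{}{\bar{m}}}]$. With the endpoint data fixed, Proposition~\ref{prop-Deadline} then pins down the intermediate state $[p^c_i,v^c_i]^\top$, the switch time $\tau_1$ via~\eqref{tcDead}, and thus the whole trajectory uniquely. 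Hence the feasible set of the low-level energy problem (Problem~\ref{problem3}) restricted to zone $m$ with travel time $D_i^m$ is a singleton, so its energy-minimizing solution is forced to be exactly this trajectory; writing $p_i^*,v_i^*$ on the two arcs is then the same routine integration of~\eqref{27a} already carried out in Case~1, which I would only state.

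The main obstacle I anticipate is the extremality/uniqueness step — formally excluding any other admissible control that still attains the maximal crossing time $D_i^m$ without touching a speed bound. The clean way to close it is a comparison (monotonicity) argument: order two feasible velocity profiles with common endpoints by their bang-bang switching structure and show that replacing any sub-arc by a "faster" admissible sub-arc strictly decreases the time needed to travel from $p_i^s$ to $p_i^e$, which forces the $u_{\min}$-then-$u_{\max}$ pattern and, given the boundary conditions, a unique switch location. Everything else is bookkeeping inherited from Proposition~\ref{prop-Deadline} and from the proof of Theorem~\ref{unconst-control-constraint becomes active}.
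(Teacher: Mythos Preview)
Your proposal is correct and follows essentially the same route as the paper: the paper's proof is simply ``similar to the proof of Theorem~\ref{unconst-control-constraint becomes active}, and thus, it is omitted,'' and that proof in turn just observes that \eqref{exit=enter+1} turns the schedule condition into $T^{m'}_i-T_i^{m}=D_i^{m}$, so the CAV travels at the latest feasible time and, with the speed constraint inactive, the trajectory coincides with the deadline-problem solution of Lemma~\ref{Lemma-deadline-control-input}. Your treatment is actually more careful than the paper's, since you explicitly address the uniqueness of the maximizing control, which the paper takes for granted.
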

\begin{proof}
The proof is similar to the proof of Theorem \ref{unconst-control-constraint becomes active}, and thus, it is omitted.
\end{proof}

In this case, the optimal control input is
\begin{gather}\label{Umax-umin}
 u^*_i(t)=\left\{ \begin{array}{ll}
u_{\min}&, \mbox{if}\quad T_i^{^{m}}\leq t<\tau_1\\
u_{\max} &, \mbox{if}\quad \tau_1\leq t\leq T_i^{^{\underaccent{}{\bar{m}}}}\\
           \end{array}. \right. 
\end{gather}
Substituting (\ref{Umax-umin}) in (\ref{27a}), we have
\begin{align}
p_i^{\ast}(t)=&\frac{1}{2}u_{{\min}}t^2+b_it+c_i, \nonumber \\
v_i^{\ast}(t)=&u_{{\max}}t+b_i ,&\forall~ t\in [T_i^{^{m}},\tau_1^-],\\
p_i^{\ast}(t)=&\frac{1}{2}u_{{\max}}t^2+d_it+e_i, \nonumber \\
v_i^{\ast}(t)=&u_{{\min}}t+d_i ,&\forall~ t\in [\tau_1^+,T_i^{^{\underaccent{}{\bar{m}}}}],
\end{align}
where $b_i,c_i,d_i$, and $e_i$ are integration constants, which can be computed by using initial and final conditions of CAV in zone $m\in\mathcal{I}_i$ respectively. The switching point $\tau_1$ can be found from \eqref{deadline3}.

\textbf{Case 4:} CAV $i$ enters the constrained arc  $u_i(t)=u_{{\min}}$ at $T_i^{^{m}}$, then it moves to the constrained arc $v_i(t)=v_{{\min}}$ at $t=\tau_1$. It exits the constrained arc $v_i(t)=v_{{\min}}$ at $t=\tau_2$, and it enters the constrained arc $u_i(t)=u_{{\max}}$ and stays on it until $T_i^{^{\underaccent{}{\bar{m}}}}$.

\begin{corollary}\label{Corol.deadline-Unconst-constraint becomes activ}
Case $4$ is realized for CAV $i\in \mathcal{N}(t)$ travelling in zone $m\in\mathcal{I}_i$, if the speed constraint becomes active in zone $m$ and \eqref{schedule=deadline} holds. 
\end{corollary}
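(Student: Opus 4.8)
The plan is to mirror the argument used for Theorem \ref{unconst-control-constraint becomes active}, but now exploiting the \emph{maximal} feasible traversal time rather than the minimal one. First I would invoke the identity \eqref{exit=enter+1}, i.e.\ $T^{^{m^\prime}}_i = T_i^{^{\underaccent{}{\bar{m}}}}$, to rewrite the hypothesis \eqref{schedule=deadline} as $T^{^{m^\prime}}_i - T_i^{^{m}} = D_i^m$. By the definition of the deadline as the optimal value of Problem \ref{problem2.bDeadline}, $D_i^m$ is the largest time in which CAV $i$ can traverse zone $m$ while respecting the dynamics \eqref{27a} and the bounds \eqref{uconstraint}--\eqref{vconstraint}; hence the trajectory produced by \emph{any} admissible control of the low-level problem (Problem \ref{problem3}) with entry and exit times $T_i^{^{m}}, T_i^{^{\underaccent{}{\bar{m}}}}$ must saturate this bound. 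Therefore the energy-optimal control of Problem \ref{problem3} necessarily coincides with a maximizer of the deadline problem, and it remains only to identify the structure of that maximizer under the stated activeness hypothesis.

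Next I would recall, from Lemma \ref{Lemma-deadline-control-input}, that the time-maximizing control is bang-bang with at most one interior switch, and that the only candidate consistent with maximizing the traversal time is the arc $u_i(t)=u_{\min}$ followed by $u_i(t)=u_{\max}$. If this unconstrained candidate already satisfies \eqref{vconstraint}, it is the (essentially unique) maximizer and the speed constraint is inactive, contradicting the hypothesis. If it violates the speed bound, then, because by the remark following Theorem \ref{theorem-endtime-intermediateTime-con} (the analogue of Corollary \ref{onlyVmax} for the deadline problem) only the arc $v_i(t)=v_{\min}$ — and never $v_i(t)=v_{\max}$ — can be activated, the maximizer is obtained by piecing the two unconstrained arcs together with the constrained arc $v_i(t)=v_{\min}$, exactly as in Proposition \ref{prop-DeadlineConstrained}. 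This three-arc trajectory has precisely the form asserted in Case 4: an initial arc $u_i(t)=u_{\min}$ on $[T_i^{^{m}},\tau_1)$, the constrained arc $v_i(t)=v_{\min}$ on $[\tau_1,\tau_2]$, and a terminal arc $u_i(t)=u_{\max}$ on $(\tau_2,T_i^{^{\underaccent{}{\bar{m}}}}]$, with the switching instants $\tau_1,\tau_2$ and the intermediate positions given by the closed-form expressions in Proposition \ref{prop-DeadlineConstrained}. Continuity of the state at $\tau_1,\tau_2$ together with the boundary data then pins down all constants of integration, and feasibility of Problem \ref{problem3} (its boundary data come from an already-solved scheduling problem) guarantees such a trajectory exists.

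The step I expect to be the main obstacle is the uniqueness claim underlying the passage "every admissible control must attain $D_i^m$, hence equals the deadline maximizer": one must verify that no qualitatively different admissible control — for instance one that touches $v_i(t)=v_{\max}$, or that uses a switching pattern other than $u_{\min}\!\to\!u_{\max}$ — can also realize the traversal time $D_i^m$. This is handled by transferring the costate sign analysis from the proofs of Lemma \ref{Lemma-oneSP-U} and Lemma \ref{Lemma-deadline-control-input}, together with Corollary \ref{onlyVmax}, to the deadline setting, which shows that the time-extremal control is unique up to the single switching instant. Once uniqueness is in hand, everything else reduces to the same algebra already carried out for Theorem \ref{unconst-control-constraint becomes active} and Corollary \ref{corol.const-control-constraint becomes active}, so the remainder of the proof can safely be omitted by analogy, as the paper does for the companion statements.
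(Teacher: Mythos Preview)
Your proposal is correct and follows essentially the same route as the paper: invoke \eqref{exit=enter+1} to rewrite \eqref{schedule=deadline} as $T^{^{m^\prime}}_i-T_i^{^{m}}=D_i^m$, observe that the CAV is therefore traversing zone $m$ at the latest feasible time so that the low-level solution must coincide with the maximizer of Problem~\ref{problem2.bDeadline}, and then read off the constrained three-arc structure from Proposition~\ref{prop-DeadlineConstrained}. The paper simply omits the proof by analogy with Theorem~\ref{unconst-control-constraint becomes active}, so your more careful discussion of uniqueness is additional detail rather than a different approach.
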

\begin{proof}
The proof is similar to the proof of Theorem \ref{unconst-control-constraint becomes active}, and thus, it is omitted.
\end{proof}

In this case, the optimal control input is 
\begin{gather}\label{umin-Vmin-Umax}
 u^*_i(t)=\left\{ \begin{array}{ll}
u_{\min}&, \mbox{if}\quad T_i^{^{m}}\leq t<\tau_1\\
0 &, \mbox{if}\quad \tau_1\leq t\leq \tau_2\\
u_{\max} &, \mbox{if}\quad \tau_2\leq t\leq T_i^{^{\underaccent{}{\bar{m}}}}\\
           \end{array}. \right. 
\end{gather}
Substituting (\ref{umin-Vmin-Umax}) in (\ref{27a}), we have 
\begin{align}
p_i^{\ast}(t)=&\frac{1}{2}u_{{\min}}t^2+b_it+c_i, \nonumber \\
v_i^{\ast}(t)=&u_{{\min}}t+b_i , &\forall~ t\in [T_i^{^{m}},\tau_1^-],\\
p_i^{\ast}(t)=&v_{{\min}}t+d_i, \nonumber \\
v_i^{\ast}(t)=&v_{{\min}}, &\forall~ t\in [\tau_1^+,\tau_2^-],\\
p_i^{\ast}(t)=&\frac{1}{2}u_{{\max}}t^2+e_it+f_i, \nonumber \\
v_i^{\ast}(t)=&u_{{\max}}t+e_i , &\forall~ t\in [\tau_2^+,T_i^{^{\underaccent{}{\bar{m}}}}].
\end{align}

\subsubsection{CAV travels on a combination of constrained and unconstrained arcs}
Using \eqref{27}, we first start with the unconstrained solution of Problem \ref{problem3}. If the solution violates any of the speed \eqref{vconstraint} or control \eqref{uconstraint} constraints, then the unconstrained arc is pieced together with the arc corresponding to the violated constraint at unknown time $\tau_1$, and we re-solve the problem with the two arcs pieced together. The two arcs yield a set of algebraic equations which are solved simultaneously using the boundary conditions and interior conditions at $\tau_1$. If the resulting solution violates another constraint, then the last two arcs are pieced together with the arc corresponding to the new violated constraint, and we re-solve the problem with the three arcs pieced together at unknown times $\tau_1$ and $\tau_2$. The three arcs will yield a new set of algebraic equations that need to be solved simultaneously using the boundary conditions and interior conditions at $\tau_1$ and $\tau_2$. The process is repeated until the solution does not violate any other constraints, \cite{Malikopoulos2017}.

\subsubsection{CAV enters the safety constrained arc}
Let CAV $i$ enter and exit zone $m\in\mathcal{I}_i$ at $ T_i^{^{m}}$ and $ T_i^{^{\underaccent{}{\bar{m}}}}$ respectively. CAV $k$ is immediately positioned in front of CAV $i$ in zone $m$, and CAV $i$ activates the rear-end safety constraint \eqref{RearEndCons} at time $\tau_1\in[T_i^{^{m}},T_i^{^{\underaccent{}{\bar{m}}}}] $. We have two cases to consider: Case 1: CAV $i$ remains in the constrained arc until $T_i^{^{\underaccent{}{\bar{m}}}}$ or Case 2: CAV $i$ exits the constrained arc at $\tau_2\in[\tau_1,T_i^{^{\underaccent{}{\bar{m}}}}] $.

\begin{lemma}\label{lemma-notactivatingRearEnd}
Let CAV $i$ and $k\in\mathcal{N}(t)$ enter zone $m$ at time $T_i^{^{m}}$ and $T_k^{^{m}}$ respectively. Let CAV $k$ be immediately ahead of $i$. Then the rear-end safety constraint for CAV $i$ does not become active at the entry of zone $m$, if the minimum time headway $h\in \Gamma_i$, where
\begin{equation}\label{lemma-RearEndSet}
    \Gamma_i = \{t ~|~ \frac{1}{2}u_{\min}t^2+v_k(T_k^{^{m}})t-\varphi v_i(T_i^{^{m}})-\gamma_i >0 ,~\forall~ t \in\mathbb{R}^{+}\}.
\end{equation}
\end{lemma}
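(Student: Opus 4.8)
The plan is to evaluate the rear-end safety constraint \eqref{RearEndCons} at the single instant $t=T_i^{^{m}}$ and show that the hypothesis $h\in\Gamma_i$ makes it hold with strict inequality, so it cannot be active there. First I would substitute $t=T_i^{^{m}}$ into the definition of $d^m(\cdot)$: the term $p_i(T_i^{^{m}})-p_i(T_i^{^{m}})$ vanishes, so $d^m\bigl(p_k(T_i^{^{m}}),p_i(T_i^{^{m}})\bigr)\ge\delta_i(T_i^{^{m}})$ collapses to
\begin{equation}
p_k(T_i^{^{m}})-p_k(T_k^{^{m}})\;\ge\;\gamma_i+\varphi\,v_i(T_i^{^{m}}),
\end{equation}
i.e.\ the displacement of the preceding CAV $k$ over the interval $[T_k^{^{m}},T_i^{^{m}}]$ must be at least the required safe gap $\delta_i(T_i^{^{m}})$.

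Next I would derive a worst-case lower bound on that displacement. Writing $p_k(T_i^{^{m}})-p_k(T_k^{^{m}})=\int_{T_k^{^{m}}}^{T_i^{^{m}}}v_k(s)\,ds$ and using $\dot v_k(s)=u_k(s)\ge u_{\min}$ from \eqref{27a} and \eqref{uconstraint}, we get $v_k(s)\ge v_k(T_k^{^{m}})+u_{\min}(s-T_k^{^{m}})$ for all $s\ge T_k^{^{m}}$. Let $h$ be the minimum time headway of \eqref{Schedulecons2}; since CAV $k$ precedes CAV $i$ we have $T_i^{^{m}}-T_k^{^{m}}\ge h>0$, and because $v_k\ge v_{\min}>0$ by \eqref{vconstraint} the integral over $[T_k^{^{m}},T_i^{^{m}}]$ is no smaller than the integral over a sub-interval of length $h$; a larger realized headway therefore only relaxes the constraint, so the extremal case is headway exactly $h$ together with maximal braking $u_k\equiv u_{\min}$. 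Integrating the linear velocity bound over an interval of length $h$ gives
\begin{equation}
p_k(T_i^{^{m}})-p_k(T_k^{^{m}})\;\ge\;v_k(T_k^{^{m}})\,h+\tfrac{1}{2}u_{\min}h^{2}.
\end{equation}

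Finally, by the definition \eqref{lemma-RearEndSet} of $\Gamma_i$, membership $h\in\Gamma_i$ is precisely $\tfrac{1}{2}u_{\min}h^{2}+v_k(T_k^{^{m}})h-\varphi v_i(T_i^{^{m}})-\gamma_i>0$, so the two displayed inequalities chain to $p_k(T_i^{^{m}})-p_k(T_k^{^{m}})>\gamma_i+\varphi v_i(T_i^{^{m}})=\delta_i(T_i^{^{m}})$; the rear-end constraint holds strictly at entry and is therefore inactive there, which is exactly the claim and, together with Definition \ref{gamma}, identifies $\Gamma_i$ explicitly. I expect the only genuinely delicate point to be the worst-case argument in the second step: justifying that discarding the floor $v_k\ge v_{\min}$ and the possibility $T_i^{^{m}}-T_k^{^{m}}>h$ yields a legitimate conservative bound (both can only increase the true displacement) rather than an invalid one; everything past that is a one-line kinematic integration.
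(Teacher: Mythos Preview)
Your proposal is correct and follows essentially the same route as the paper: evaluate \eqref{RearEndCons} at $t=T_i^{m}$ so that only the displacement $p_k(T_i^{m})-p_k(T_k^{m})$ survives, lower-bound that displacement by first shrinking the interval to length $h$ (legitimate since $v_k>0$ makes $p_k$ increasing) and then taking the worst-case control $u_k\equiv u_{\min}$, and finally invoke $h\in\Gamma_i$. Your treatment of the ``delicate point'' is in fact slightly more explicit than the paper's, which states the two reductions via an $\inf$ and an $\operatorname*{argmin}$ without spelling out the monotonicity in the headway.
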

\begin{proof}\phantom{\qedhere}
See Appendix \ref{D}.
\end{proof}
\begin{lemma}\label{lemma-casesforRearEnd}

If the rear-end safety constraint becomes active for CAV $i\in\mathcal{N}(t)$ at $\tau_1 \in(T_i^{^{m}},T_i^{^{\underaccent{}{\bar{m}}}})$, then it must exit the rear-end safety constrained arc at $\tau_2\in[\tau_1,T_i^{^{\underaccent{}{\bar{m}}}})$.
\end{lemma}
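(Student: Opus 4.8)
The plan is to rule out Case~1 by contradiction, i.e.\ to show that CAV~$i$ cannot remain on the rear-end safety constrained arc for the entire interval $[\tau_1, T_i^{^{\underaccent{}{\bar{m}}}}]$. Assume it does. Then, by definition of the constrained arc, the safety constraint \eqref{RearEndCons} is active throughout, so $d^m(p_k(t),p_i(t)) = \delta_i(t) = \gamma + \varphi\, v_i(t)$ for all $t\in[\tau_1, T_i^{^{\underaccent{}{\bar{m}}}}]$; in particular this equality holds at the terminal time $t = T_i^{^{\underaccent{}{\bar{m}}}}$. It is convenient to also record the constrained-arc dynamics: differentiating $d^m(p_k,p_i) = \gamma + \varphi v_i$ once and substituting \eqref{27a} gives $v_k(t) - v_i(t) = \varphi\, u_i(t)$, so on the arc $u_i(t) = \bigl(v_k(t) - v_i(t)\bigr)/\varphi$ and the trajectory of CAV~$i$ is completely pinned down by that of CAV~$k$ together with $v_i(\tau_1)$.

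The contradiction comes from the schedule. Problem~\ref{problem3} is posed on $[T_i^{^{m}}, T_i^{^{\underaccent{}{\bar{m}}}}]$ with the endpoints fixed by a feasible solution of the scheduling problem (Problem~\ref{Problem1}), which is constructed with a minimum time headway $h\in\Gamma_i$. By \eqref{exit=enter+1}, the exit time from zone $m$ equals the entry time $T_i^{^{\underaccent{}{\bar{m}}}}$ to the next zone $\underaccent{}{\bar{m}}$ on CAV~$i$'s path, and by Definition~\ref{gamma} (made explicit by Lemma~\ref{lemma-notactivatingRearEnd}) the choice $h\in\Gamma_i$ guarantees that the rear-end safety constraint is \emph{strictly} satisfied at the entry of every zone on CAV~$i$'s path. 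Hence at $t = T_i^{^{\underaccent{}{\bar{m}}}}$ we have $d^m(p_k(T_i^{^{\underaccent{}{\bar{m}}}}),p_i(T_i^{^{\underaccent{}{\bar{m}}}})) > \gamma + \varphi\, v_i(T_i^{^{\underaccent{}{\bar{m}}}})$ (using also Assumption~\ref{speedatBndMergingZone}, which fixes $v_i(T_i^{^{\underaccent{}{\bar{m}}}}) = v_{\text{merge}}$ at that boundary), contradicting the equality obtained above. Therefore CAV~$i$ leaves the constrained arc strictly before $T_i^{^{\underaccent{}{\bar{m}}}}$. Setting $\tau_2 := \sup\{\, t\in[\tau_1, T_i^{^{\underaccent{}{\bar{m}}}}] : d^m(p_k(t),p_i(t)) = \delta_i(t)\,\}$ and invoking continuity of $p_k,p_i,v_i$ then yields $\tau_2\in[\tau_1, T_i^{^{\underaccent{}{\bar{m}}}})$, which is the assertion.

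The step I expect to require the most care is the strict-satisfaction claim at $T_i^{^{\underaccent{}{\bar{m}}}}$ when the vehicle immediately ahead of CAV~$i$ changes at the boundary of zone~$m$: the bound furnished by $\Gamma_i$ refers to the predecessor in zone~$\underaccent{}{\bar{m}}$, whereas the constrained-arc equality refers to the predecessor $k$ in zone~$m$. I would dispose of this with a short case split: if $k$ continues into zone~$\underaccent{}{\bar{m}}$ it remains the relevant predecessor and the two distance functions coincide at $T_i^{^{\underaccent{}{\bar{m}}}}$ up to the fixed zone-length offset; if $k$ leaves CAV~$i$'s path at the end of zone~$m$, then by the geometry of the partition CAV~$k$ no longer constrains CAV~$i$ and the gap cannot remain at its minimum value up to the terminal time. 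In both sub-cases the contradiction persists, and the rest of the argument is elementary (continuity and the fixed terminal data), so this boundary bookkeeping is the only genuinely delicate point.
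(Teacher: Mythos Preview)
Your proposal is correct and follows essentially the same route as the paper: both arguments invoke Lemma~\ref{lemma-notactivatingRearEnd} (via the choice $h\in\Gamma_i$) to conclude that the rear-end safety constraint cannot be active at the entry time $T_i^{^{\underaccent{}{\bar{m}}}}$ of the next zone, hence the constrained arc must be exited strictly before then. The paper's proof is a terse two-sentence version of this; your write-up is considerably more careful---you spell out the contradiction explicitly, record the constrained-arc dynamics, and add the case split on whether the predecessor $k$ continues into zone $\underaccent{}{\bar{m}}$---but the underlying idea is identical.
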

\begin{proof}
From Lemma \ref{lemma-notactivatingRearEnd}, the rear-end safety constraint of CAV $i$ with a schedule $T_i^{^{m}}\in\mathcal{T}_i$ does not become active at the entry of zone $m\in\mathcal{I}_i$. Thus, if the rear-end safety constraint of CAV $i$ becomes active at $\tau_1$, it must exit the constrained arc before it exits zone $m$.
\end{proof}

%%%%%%%%%%%%%%%%%%%%%%%%%%%%%%%%%%%%%%%%
Suppose CAV $i\in\mathcal{N}(t)$ enters the zone $m$ at time $t=T_i^{^m}$ and at some time $t=\tau_1$, the rear-end safety constraint with the vehicle $k$ becomes active until $t =\tau_2$, $d^m(p_k(t),p_i(t)) = \delta_i(t)$ for all $t\in[\tau_1,\tau_2]$, in this case $\mu_i^s\neq0$.
Let $N_i(t,\mathbf{x}_i(t))=(p^{\ast}_i(t)-p_i^{\ast}(T_i^{^{m}}))-(p^{\ast}_k(t)-p_k^{\ast}(T_k^{^{m}}))+\gamma + v^{\ast}_i(t)$. Note that, $p_i^{\ast}(T_i^{^{m}})$ and 
$p_k^{\ast}(T_k^{^{m}})$ are time-invariant and we can simplify the notation by defining $\Bar{\gamma} = -p_i^{\ast}(T_i^{^{m}})+p_k^{\ast}(T_k^{^{m}})+\gamma$. Thus, we have
\begin{equation}\label{tangency}
 N_i(t,\mathbf{x}_i(t))=p^{\ast}_i(t)-p^{\ast}_k(t)+\Bar{\gamma} + \varphi v^{\ast}_i(t).   
\end{equation}
 Since $N_i(t,\mathbf{x}_i(t)) = 0$ for $t\in[\tau_1,\tau_2]$, its first derivative, which is dependent on the optimal control input, should vanish in $t\in[\tau_1,\tau_2]$
\begin{equation}\label{1-storderSafety}
    N^{(1)}_i(t,\mathbf{x}_i(t))=v^{\ast}_i(t)-v^{\ast}_k(t) + \varphi u^{\ast}_i(t) = 0.
\end{equation}
By taking a time derivative from \eqref{1-storderSafety}, the optimal control input of CAV $i$, when rear-end safety constraint is active can be found from solving the following ODE.
\begin{equation}\label{27safety}
\dot{u}^{\ast}_i(t)+\frac{1}{\varphi}u^{\ast}_i(t)-\frac{1}{\varphi}u^{\ast}_k(t) = 0,~\forall~ t\in[\tau_1,\tau_2].
\end{equation}
The optimal solution need to satisfy the following jump conditions on costates upon entry the constrained arc at $t=\tau_1$
\begin{align}
&\lambda_i^p(\tau_1^-) = \lambda_i^p(\tau_1^+)+\pi_i\frac{\partial N_i}{\partial p_i} = \lambda_i^p(\tau_1^+)+\pi_i,\label{36safety}\\
&\lambda_i^v(\tau_1^-) = \lambda_i^v(\tau_1^+)+\pi_i\frac{\partial N_i}{\partial v_i} = \lambda_i^v(\tau_1^+)+\varphi\pi_i,\label{36safety}\\
&H_i(\tau_1^-) = H_i(\tau_1^+)-\pi_i\frac{\partial N_i}{\partial t} = H_i(\tau_1^+)+\pi_iv^{\ast}_k(t),\label{37safety}
\end{align}
where $\pi_i$ is a constant Lagrange multipliers, determined so that $N_i(\tau_1,\mathbf{x}_i(\tau_1))=0$.
At the exit point of the constrained arc we have
\begin{align}
\lambda_i^p(\tau_2^-) &= \lambda_i^p(\tau_2^+),\label{39safety}\\
\lambda_i^v(\tau_2^-) &= \lambda_i^v(\tau_2^+),\label{40safety}\\
H_i(\tau_2^-) &= H_i(\tau_2^+).\label{41safety}
\end{align}
As described earlier, the three arcs need to be solved simultaneously using initial and final conditions (speed and position), and interior conditions at unknown time $\tau_1$ and $\tau_2$ (continuity of speed and position, jump conditions \eqref{tangency}-\eqref{41safety}).
The complete analytical solution when the rear-end safety constraint becomes active has been presented in \cite{malikopoulos2019ACC, Malikopoulos2020}.

\section{Simulation results} \label{sec:SimRes}

To evaluate the effectiveness of the proposed framework to improve travel time and the traffic throughput, we investigate coordination of CAVs at two adjacent intersections considering different traffic volumes, and then compare the results with the baseline scenario consisting of two-phase traffic signals. We consider two adjacent intersections which are $100$ m apart. In addition, the length of each road connecting to the intersections is $300$ m, and the length of the merging zones are $30$ m. We used the following parameters for the simulation: $h = 1.5$ s, $v_{\min}=5$ m/s, $v_{\max}=25$ m/s, $v_{\text{merge}}=15$ m/s  $u_{\min}=-1$  m/s$^2$, $u_{\max}=1$ m/s$^2$, $\gamma = 5$ m, and $\varphi = 0.2$ s. 

For the first scenario, we consider CAVs enter the control zone with initial speed uniformly distributed between $13$ m/s to $16$ m/s from four conflicting paths shown in figure \ref{fig:2} with equal traffic volumes. 
We construct the baseline scenario with two-phase fixed-time traffic signals in PTV-VISSIM by considering all vehicles as human-driven and without any vehicle-to-vehicle communication. We use VISSIM built-in traffic signal optimizer to obtain the traffic signal timing. To emulate the driving behavior of real human-driven vehicles, we use a built-in car-following model (Wiedemann \cite{Wiedemann1974}) in PTV-VISSIM with default parameters. We let the speed limit for the baseline scenario to be $v_{\max}$. In the optimal-scenario, we use MATLAB to simulate our framework. To compare the optimal scenario with the baseline scenario, CAVs enter the control zone at the same time, speed, and path that they entered in the baseline scenario. Videos of the experiment can be found at the supplemental site, \url{https://sites.google.com/view/ud-ids-lab/TITS}.

Table \ref{tbl:average travel time} presents the average travel time of all CAVs inside the control zone for the baseline and optimal scenarios at different traffic volumes ranging from $400$ veh/h to $1200$ veh/h per path. For each traffic volume, we performed five simulations with different random seeds and averaged the results. Within our proposed framework average travel time has been decreased by $21\% -33\%$ compared to the baseline scenario. Relative frequency histogram of travel time of each CAV for traffic volume $1200$ veh/h for a randomly selected seed for the baseline and optimal scenarios are shown in Fig. \ref{fig:histTravelTime}. The optimal scenario has a high relative frequency of lower travel time compared to the baseline scenario. Travel time for $65\%$ of CAVs lies in the range [$40$s-$50$s] for the optimal scenario, whereas, travel time of vehicles for the baseline scenario has a higher variation, and only $20\%$ of vehicles' travel time is in the range [$20$s-$50$s]. In the optimal scenario, maximum travel time is in the range [$60$s-$70$s] compared to the maximum range [$120$s-$130$s] for the baseline scenario.      

\begin{table}[htbp]
\caption{Average travel time of vehicles in the optimal and baseline scenarios for different traffic volumes.}
\vspace{0.5em}
\centering
\begin{tabular}{c|c|c|c|c} \label{tbl:average travel time}
    Traffic volume & Average number& \multicolumn{2}{c}{Average travel time (s)}& Decrease
    \\
    (veh/h)   & of vehicles &  Baseline              & Optimal          & $\%$
    \\
    \toprule
        $400$  &14&  $51.89$  &   $40.81$  & $21$\\
        $600$  &18&  $57.52$  &   $41.86$  & $27$\\
        $800$  &25&  $63.30$  &   $43.26$  & $32$\\
        $1000$ &31&  $68.82$   &   $46.59$  & $32$\\
        $1200$ &36&  $72.15$  &   $48.53$  & $33$\\
\end{tabular}
\end{table}

\begin{figure}
\centering
\includegraphics[width=0.9\linewidth]{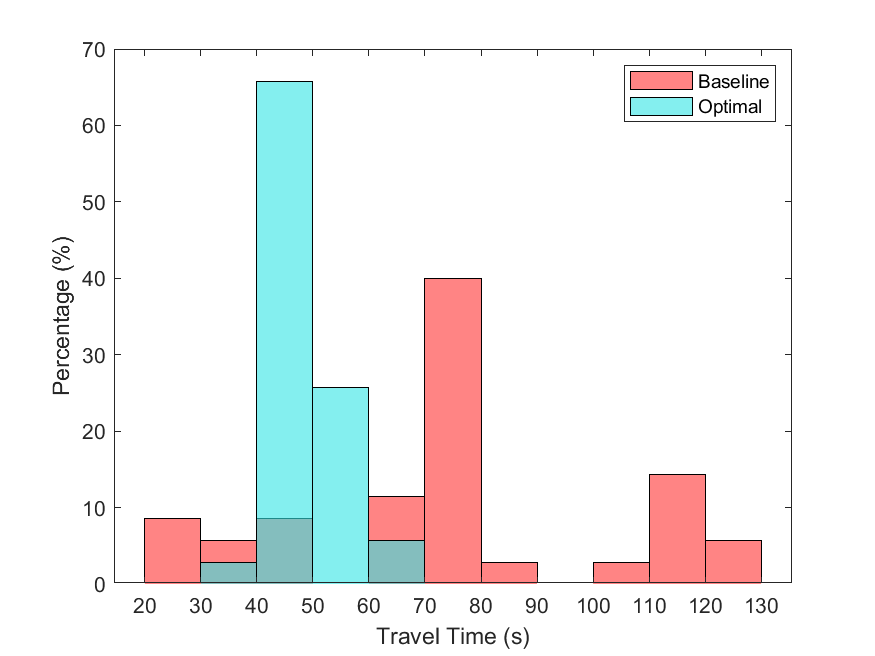}
\caption{A relative frequency histogram for travel time of each vehicle for the baseline and optimal scenarios with traffic volume 1200 veh/h. }
\label{fig:histTravelTime}%
\end{figure}

The instantaneous average, maximum and minimum speed of CAVs inside the control zone for the baseline and optimal scenarios with traffic volume 600 veh/h, 1000 veh/h, and 1200 veh/h for a randomly selected seed are illustrated in Fig \ref{fig:rangeSpeed}. The instantaneous minimum speed for all traffic volumes in the optimal scenario is positive and higher than $v_{\min}$ indicating smooth traffic flow, compared to the baseline scenario, which experiences much stopping due to the traffic lights.  Relative frequency histogram of the average speed of each CAV for traffic volume $1200$ veh/h for the baseline and optimal scenarios are plotted in Fig. \ref{fig:histMeanSpeed} on top of each other. Figure \ref{fig:histMeanSpeed} shows that, in the baseline scenario, the average speed of almost all CAVs are lower than the optimal scenario, and that the CAVs in the optimal scenario speed up and $70\%$ of CAVs achieve mean speed higher than $14$ m/s, whereas in the baseline scenario vehicles decrease their speed while $75\%$ of the vehicles have mean speed less than $12$ m/s.

To evaluate the fuel efficiency improvement through our framework, we use a polynomial meta-model proposed in \cite{kamal2012model}, which approximate the fuel consumption in ml/s as a function of speed and control input of a CAV and coefficients obtained from an engine torque-speed-efficiency map of a typical car. Table \ref{tbl:average fuel consumption} summarizes the average fuel rate and fuel consumption for the optimal and baseline scenarios at different traffic volumes for which five simulations with different random seeds were performed, and the results were averaged. Although our time-optimal framework results in higher average fuel rate compared to the baseline scenario, it leads to smaller fuel consumption as the traffic flow gets higher. Higher fuel rate in our approach is result of speeding to reach the speed imposed at the boundaries of zones in the merging zones, i.e., $v_{\text{merge}}$. Similarly, setting $v_{\text{merge}}$ at boundaries of zones may lead into a discontinuity of the control input at these points which can cause higher fuel rate, and passenger's discomfort. As it can be seen, there is a trade-off between minimizing travel time and energy-consumption. One may consider minimizing them jointly by formulating multi-objective cost function as proposed by \cite{zhang2019decentralized,zhang2019joint} for a single intersection, or using a recursive structure for the upper-level problem in \cite{Malikopoulos2017}
to trade off between the energy minimization and throughput maximization indirectly. 
\begin{table}[htbp]
\caption{Average fuel rate, and fuel consumption of vehicles in the optimal and baseline scenarios for different traffic volumes.}
\vspace{0.5em}
\centering
\begin{tabular}{c|c|c|c|c} \label{tbl:average fuel consumption}
    Traffic volume & \multicolumn{2}{c}{Average fuel rate (ml/s)}& \multicolumn{2}{c}{Average fuel consumption (l)}\\
    (veh/h)   &Baseline & Optimal&Baseline &  Optimal 
    \\
    \toprule
        $400$  &$1.53$&  $2.03$  &   $0.076$  & $0.083$\\
        $600$  &$1.45$&  $1.92$  &   $0.079$  & $0.080$\\
        $800$  &$1.35$&  $1.80$  &   $0.079$  & $0.077$\\
        $1000$ &$1.29$&  $1.62$   &  $0.079$  & $0.074$\\
        $1200$ &$1.24$&  $1.57$  &   $0.077$  & $0.075$\\
\end{tabular}
\end{table}

The mean and standard deviation of computation times of CAVs in solving the MILP in the scheduling problem (Problem \ref{Problem1}) for different traffic volumes is listed in Table \ref{tbl:computation}. For each traffic flow, we choose the seed with a maximum mean of computation time to report. It shows that the scheduling problem is computationally feasible and does not grow exponentially with increasing the traffic volume and number of CAVs. The mean computation time of CAVs in different traffic volume is in the range \mbox{[$21$ ms - $25.4$ ms]} with very small standard deviation. 

In the second scenario, we further evaluate the performance of our upper-level scheduling approach compared to the centralized scheduling and FIFO queuing policy. Namely, we consider all different possible paths in two adjacent intersections. We have six different origins or destination for vehicles, including northbound $1$ (NB$1$), northbound $2$ (NB$2$), eastbound (EB), westbound (WB),  southbound $1$ (SB$1$), and southbound $2$ (SB$2$). There are five possible paths for each origin, which in total would be $30$ possible paths. We assume that CAVs enter the control zone based on a Poisson process with a rate of $1$ second. We performed five simulations with random seeds for different numbers of vehicles and averaged the results.

To obtain maximum possible performance of the system in the centralized scheduling, we consider that the initial conditions of all CAVs, including their arrival time and their initial speed at the control zone, are known to the central controller. We then formulated a centralized scheduling problem aimed at minimizing average travel time of all CAVs subject to the constraints (\ref{Schedulecons1}) and (\ref{Schedulecons2}).
The solution to the centralized scheduling problem is schedule tuples of all CAVs, which then can be used as inputs for the low-level problem. On the other hand, in our upper-level scheduling formulation, each CAV solves the scheduling problem upon entering the control zone, and once the solution is derived, then the schedule tuple of the CAV does not change. When a new CAV $i$ enters the control zone, the schedule tuples of other CAVs are fixed since they entered earlier, and they already obtained their optimal schedule tuples. Using this sequential decision-making approach, we may sacrifice optimality in terms of minimizing total travel time since we assume that when a new vehicle arrives, it cannot change the schedule of the vehicles that arrived earlier. However, the optimality gap between the centralized and decentralized solution may be reduced by introducing a re-planning framework, to update the schedules of the vehicles as new vehicle arrives.

Figure \ref{fig:comparisonUpperLevel} illustrates the average travel time of all CAVs and computation time for our decentralized scheduling approach, centralized scheduling, and FIFO queuing policy. As it can be seen, there is a close gap between the average travel time achieved by our decentralized scheduling problem and the centralized scheduling problem. However, the computation complexity of the centralized scheduling problem renders it inapplicable for real-time implementation.
Moreover, there are significant delays in travel time caused by strict FIFO queuing policy leading into sub-optimal solutions for two adjacent intersections. Note that, by increasing the number of vehicles from $15$ to $75$, average travel time is changed from $41$ s to $44.6$ s, and from $40$ s to $42.9$ s for decentralized and centralized scheduling, respectively. However, this is not a significant change since the vehicles arrive at the control zone at the fixed flow rate based on the Poisson process with a rate of $1$ second.

\begin{figure}
\centering
\includegraphics[width=0.99\linewidth]{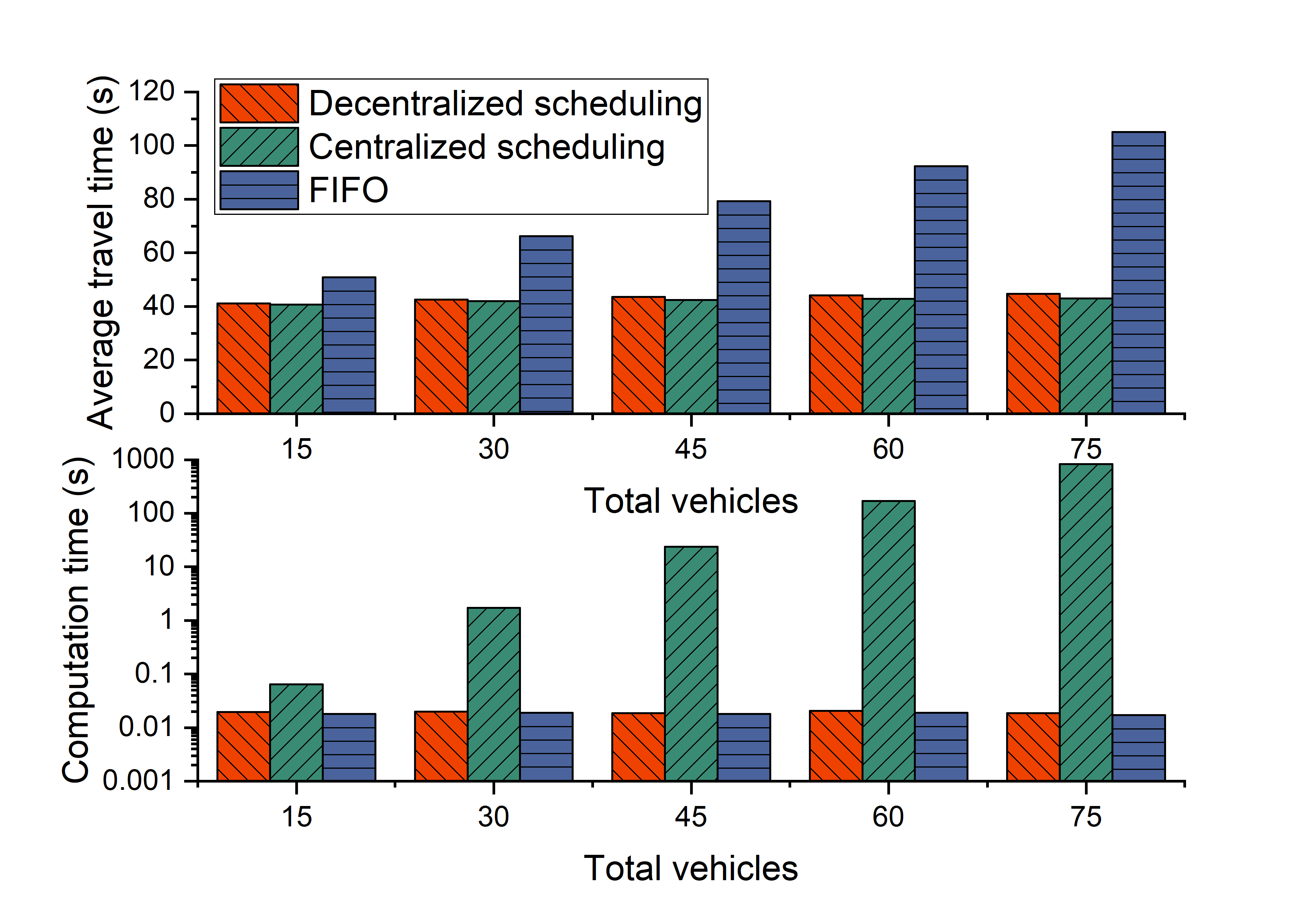}
\caption{Average travel time and computation time of decentralized scheduling, centralized scheduling, and FIFO queuing policy for different numbers of vehicles.}
\label{fig:comparisonUpperLevel}%
\end{figure}

To demonstrate the effects of flow rate on the performance of our approach, we evaluate coordination of $30$ vehicles arriving at all possible paths under different Poisson process arrival rates compared to the centralized scheduling and FIFO queuing policy. Similar to the previous scenarios, we performed $5$ simulations with random seeds for different arrival rates and averaged the results. Figure \ref{fig:comparisonUpperLevelDifferentArrival} shows the average travel time and average computation time for decentralized scheduling, centralized scheduling, and FIFO queuing policy. In Fig. \ref{fig:comparisonUpperLevelDifferentArrival}, smaller Poisson arrival rates correspond to the higher traffic volumes. In all three approaches, by decreasing traffic volume, average travel time increases; however, this change is more significant in the FIFO queuing policy. In the centralized scheduling approach, the computation complexity increases with the increase in traffic volume, making it difficult for real-time implementation.

\begin{figure}
\centering
\includegraphics[width=0.99\linewidth]{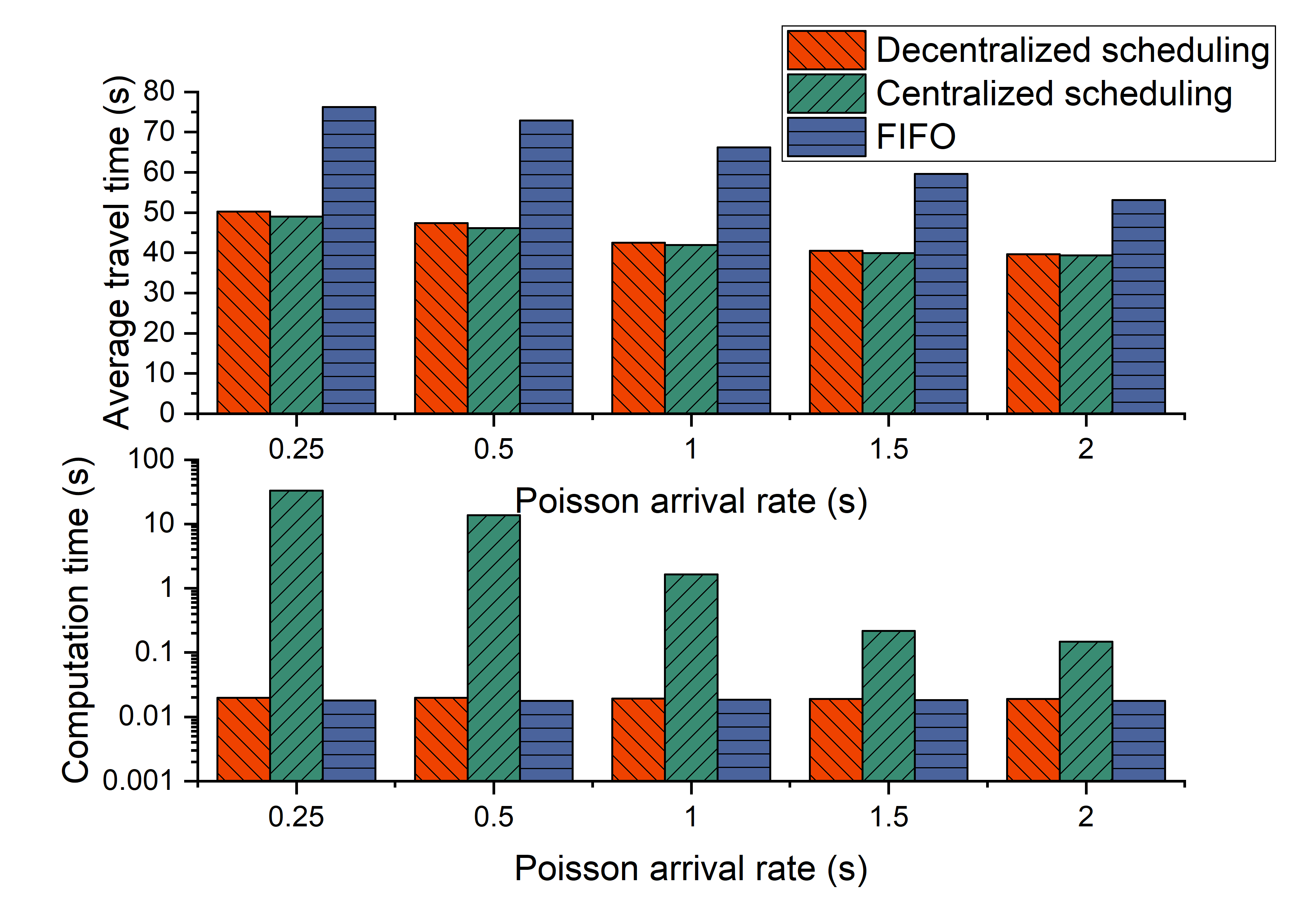}
\caption{Average travel time and computation time of decentralized scheduling, centralized scheduling, and FIFO queuing policy for $30$ vehicles with different Poisson arrival rates.}
\label{fig:comparisonUpperLevelDifferentArrival}%
\end{figure}

Since $v_\text{merge}$ is imposed a priori, upon entering the control zone CAV $i$ might find the scheduling problem (Problem \ref{Problem1}) infeasible. In this case, CAV $i$ searches for the largest speed less than $v_\text{merge}$, in which the scheduling problem has a feasible solution. Note that due to the cheap computation cost of solving the scheduling problem (Table \ref{tbl:computation}), this can be achieved in real time. However, if there is a scenario of high-traffic volume and $v_{\min} > 0$, the scheduling problem may still have not a feasible solution. To address this problem, it is required to set $v_{\min}=0$, implying that the latest feasible time for CAV $i$ to travel at zone $m\in\mathcal{I}_i$ is infinity. Thus, to satisfy the safety constraint, the arrival time at zone $m$ might be too big resulting in CAV $i$ reaches to a full stop if the road reaches its maximum capacity. 

\begin{table}[htbp]
\caption{The mean, and standard deviation of computation times of CAVs in solving the MILP.}
\vspace{0.5em}
\centering
\begin{tabular}{c|c|c|c} \label{tbl:computation}
    Traffic volume & Total vehicles& Mean & Standard deviation 
    \\
    (veh/h)   & &         (ms)       & (ms)          
    \\
    \toprule
        $400$  &16&  $21.0$  &   $2.9$\\
        $600$  &19&  $21.3$  &   $1.6$\\
        $800$  &25&  $23.8$  &   $2.3$\\
        $1000$ &29&  $24.1$   &  $2.2$\\
        $1200$ &35&  $25.4$  &   $1.9$\\
\end{tabular}
\end{table}

\begin{figure*}[ht]
    \centering
\subfloat[][]{\includegraphics[width=0.33\linewidth]{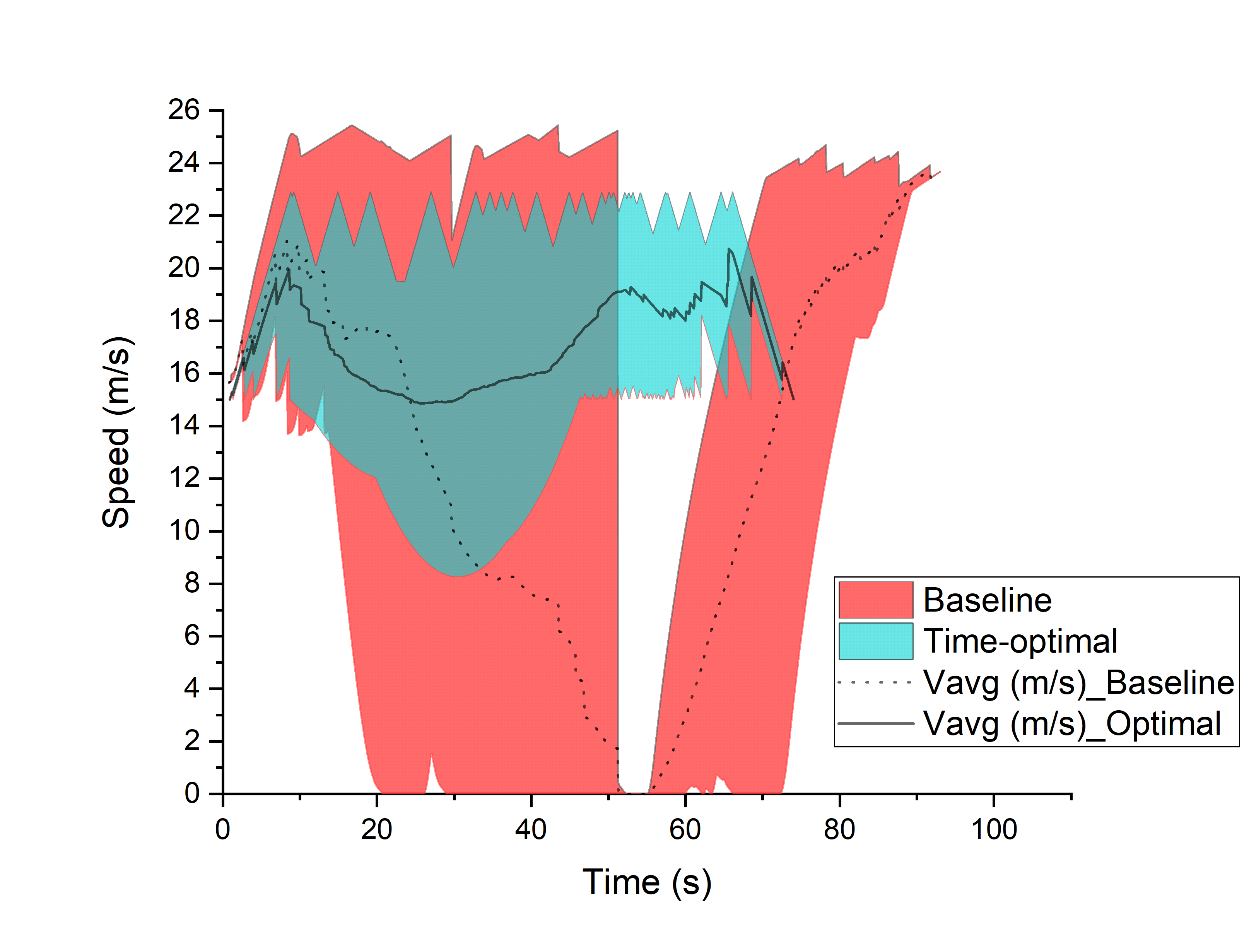}\label{a}}
\subfloat[][]{\includegraphics[width=0.33\linewidth]{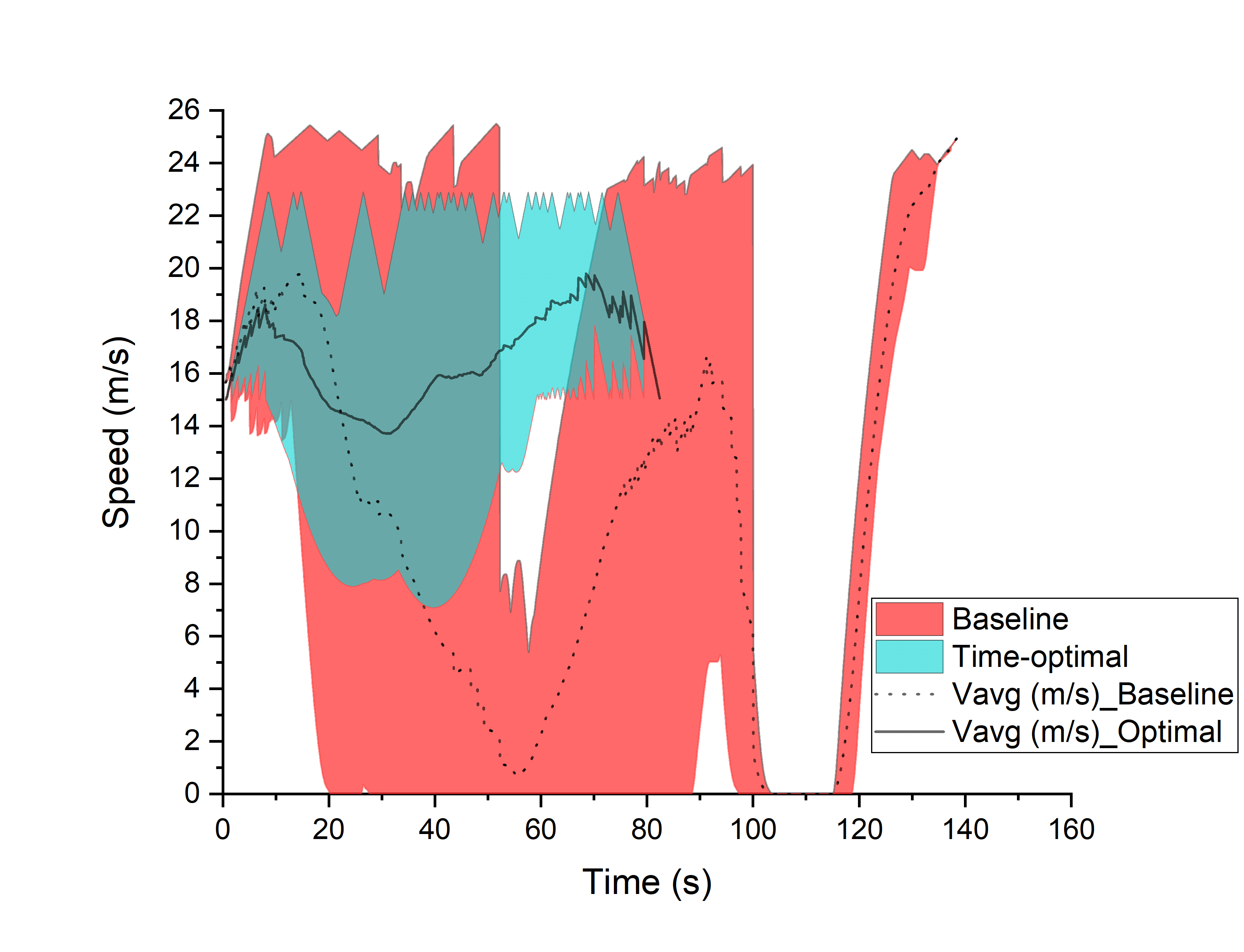}\label{b}}
\subfloat[][]{\includegraphics[width=0.33\linewidth]{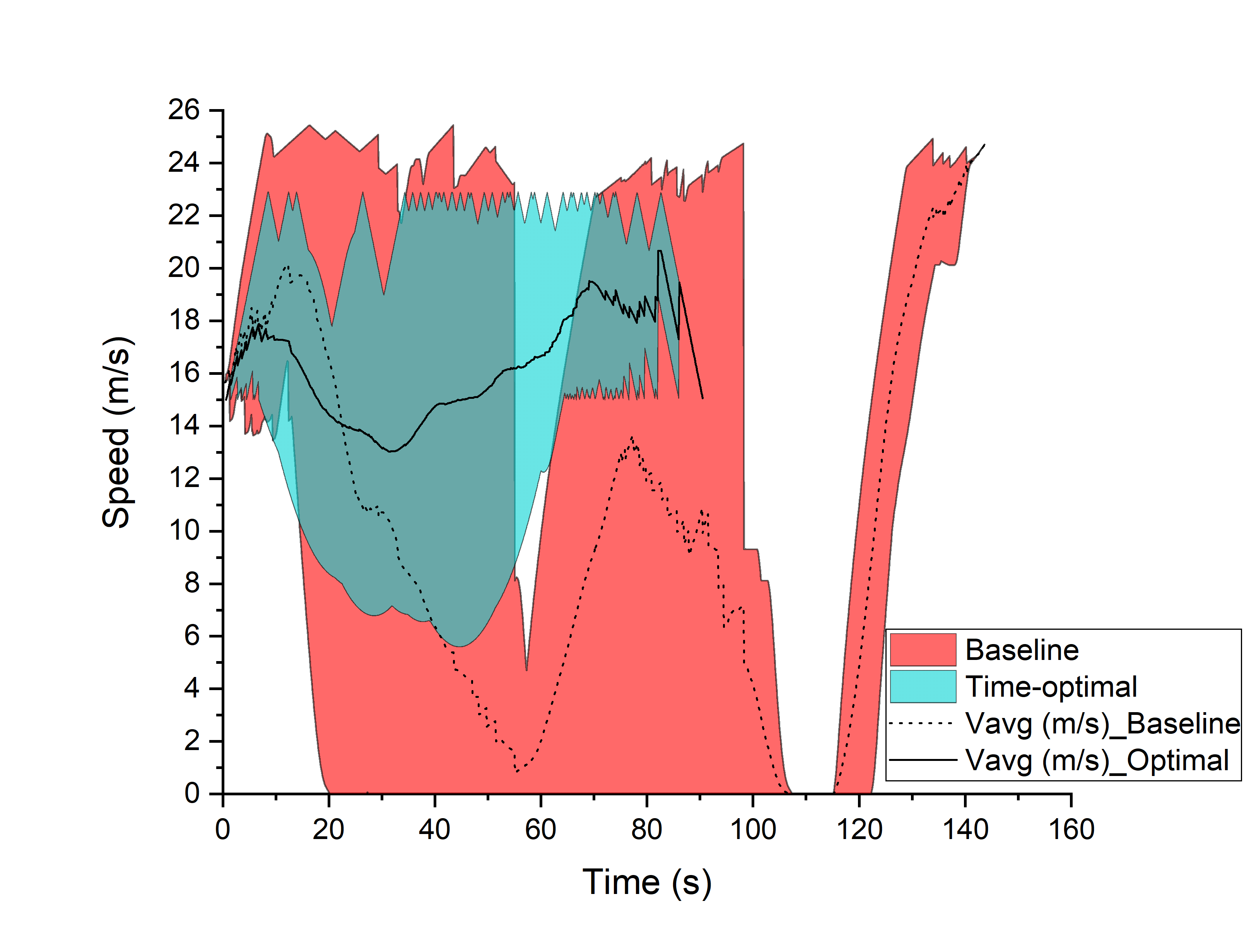}\label{c}}
    \caption{The instantaneous average, maximum and minimum speed of CAVs inside the control zone for the baseline and optimal scenarios with traffic volume \protect\subref{a} 600 veh/h, \protect\subref{b} 1000 veh/h and \protect\subref{c} 1200 veh/h.}
    \label{fig:rangeSpeed}
\end{figure*}

\begin{figure}
\centering
\includegraphics[width=0.9\linewidth]{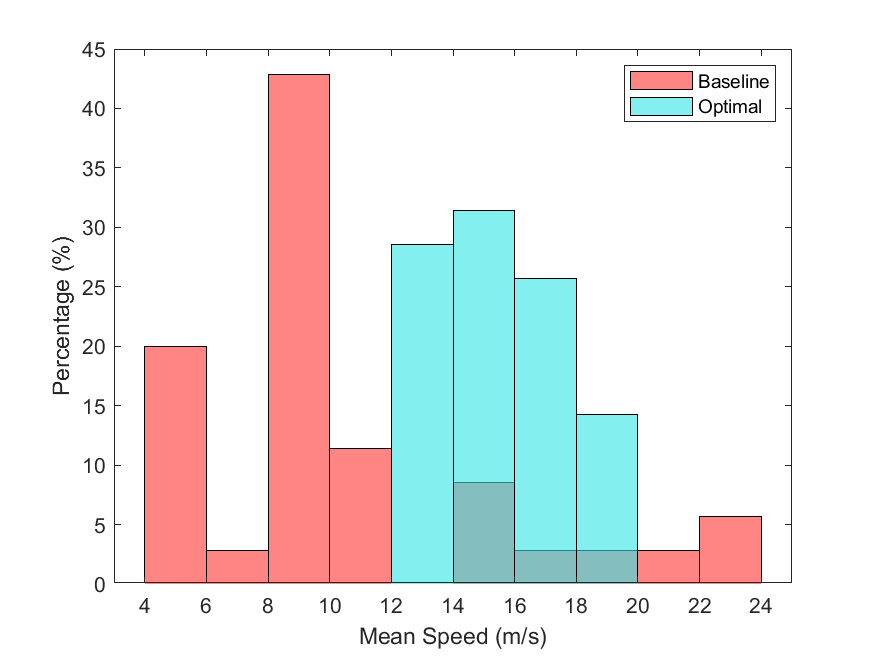}
\caption{A relative frequency histogram for mean speed of each vehicle for the baseline and optimal scenarios with traffic volume 1200 veh/h.}
\label{fig:histMeanSpeed}%
\end{figure}

\section{Concluding Remarks and Future Research}\label{sec:conc}

In this paper, we proposed a decentralized time-optimal control framework for CAVs at adjacent intersections.
We established a hierarchical optimal control framework for the coordination of CAVs consisting of three upper-level problems and one low-level problem. In the upper level, we formulated a scheduling problem that each CAV solves upon entering the control zone. The upper-level problem's outcome becomes the input of the low-level problem, which is the tuple of optimal arrival time at each zone to avoid the lateral and rear-end collision and minimize the CAV's travel time.  
In the low-level control, we formulated an optimal control problem, the solution of which yields the optimal control input (acceleration/deceleration) minimizes the transient engine operation. We derived an analytical solution for each zone that can be implemented in real time. Finally, we demonstrated the effectiveness of the proposed framework through simulation, and compared the results with the baseline scenario in different traffic volumes.

As we discussed in Section \Rmnum{3}, solving a constrained solution leads to solving a system of non-linear equations that might be hard to solve in real-time for some cases. However, a different approach to address this problem has been discussed in \cite{Malikopoulos2019CDC} and \cite{Malikopoulos2020}, in which the upper-level optimization problem yield a time that results in the unconstrained energy-optimal solution in the low-level problem. This approach has also been validated in \cite{chalaki2020experimental} at University of Delaware's Scaled Smart City for multi-lane roundabouts. Several research efforts considered a lane choice as an integer variable in their MILP formulation \cite{yu2019corridor,yu2018integrated,wong2003lane}; however, increasing the number of integer variables usually increases the computational complexity of the problem, which renders them inapplicable to real-time applications. Although in this paper we proposed an approach for two adjacent intersections, it is of interest to explore the extension of these methods to a transportation network, including multiple traffic scenarios, such as roundabouts, intersections, merging roadways, and left turn bay without increasing the computational complexity. For instance, in \cite{chalaki2020TCST}, we proposed a recursive algorithm for the upper-level planning for multiple intersections, the solution of which yields the energy-optimal arrival times at each intersection and the lane choice.

Ongoing research considers the presence of noise and error in the framework originated from the vehicle-level control and also investigates the effects of errors and delays in the vehicle-to-vehicle and vehicle-to-infrastructure communication. Another potential direction for future research is to consider coordination for mixed-traffic scenarios and the interaction of human-driven vehicles and CAVs. To consider mixed-traffic scenarios in our framework, we need to have a unit for trajectory prediction of human-driven vehicles. Considering human-driven vehicles to have a fixed schedule, each CAV attempts to formulate the scheduling problem and solve it. However, to ensure safety throughout the control zone, we may need to have a dynamic scheduling in which the scheduling problem may need to be re-solved based on a new prediction when the actual trajectory of human-driven vehicles deviates from the old prediction.
%%%%%%%%%%%%%%%%%%%%%%%%%%%%%%%%%%%%%%%%%%%%%%%%%%%%

\begin{appendices}

\section{Proof of Lemma \ref{Lemma-oneSP-U}}\label{AA}
If the state constraints for the release time problem (Problem \ref{problem2}) are not active, this implies that $\mu_i^c = \mu_i^d=0$. Solving \eqref{24} and \eqref{25} we have $\lambda_i^{p{^\ast}}(t)=a_i$ and $\lambda_i^{v{^\ast}}(t)=-a_it+b_i$, where $a_i$, $b_i$ are the constants of integration.
From Pontryagin's minimum principle, the optimal control input should satisfy the following condition

\begin{gather}
    H(t,p_i^\ast(t),v_i^\ast(t),u_i^\ast(t),\lambda_i^\ast(t))\leq H(t,p_i^\ast(t),v_i^\ast(t),u_i(t),\lambda_i^\ast(t)).
\end{gather}
Substituting \eqref{hamil1} in above equation, and simplifying yields 
\begin{gather}
\lambda_i^{v{^\ast}}(t)~u_i^\ast(t)\leq \lambda_i^{v{^\ast}}(t)~u_i(t),
\end{gather}

Therefore, $u^\ast(t)$ is found as follows  
\begin{gather}
 u_i^\ast(t)=\left\{ \begin{array}{ll}
u_{\min},& \mbox{if}\quad \lambda_i^{v{^\ast}}(t)>0\\
u_{\max} ,& \mbox{if}\quad \lambda_i^{v{^\ast}}(t)<0\\
           \end{array}. \right. 
\end{gather}
It follows immediately from the linearity of  $\lambda_i^{v{^\ast}}(t)$ that its sign can change at most once.
For the second statement, we use the fact that we can have at most one switching point. There are four cases that we should consider: Case 1: $u_i^\ast(t) =u_{\min}$, Case 2: $u_i^\ast(t)=u_{\max}$, Case 3: $u_i^\ast(t)=u_{\max}$ and then it switches to $u_i^\ast(t)=u_{\min}$, and Case 4: $u_i^\ast(t)=u_{\min}$ and then it switches to $u_i^\ast(t)=u_{\max}$. 
The initial and final states, denoted by $[ p^s_i,v^s_i]^\top$ and $[p^e_i,v^e_i]^\top$ respectively, are known. 

Case 1: If CAV $i\in\mathcal{N}(t)$ decelerates with $u_{\min}$, then from (\ref{27a}) its final speed is
\begin{gather}\label{finalvel2}
 v^{f}_i=\sqrt[]{2u_{\min}\cdot(p^e_i-p^s_i)+{v^s_i}^2}.
\end{gather}

Case 2: If CAV $i\in\mathcal{N}(t)$ accelerates with $u_{{\max}}$, similarly from (\ref{27a}) its final speed is
\begin{gather}\label{finalvel2}
 v^{f}_i=\sqrt[]{2u_{{\max}}\cdot(p^e_i-p^s_i)+{v^s_i}^2}.
\end{gather}

Case 3: We have $u_{{\max}}$ then $u_{\min}$, this implies the following:
\begin{equation}
\lambda_i^{v{^\ast}}(t)=\left\{ \begin{array}{ll}
-&,\: \mbox{if}\quad t_i^{s,m}\leq t<t^{c,m}_i\\
0 &,\: \mbox{if}\quad t =t^{c,m}_i \\
+ &,\: \mbox{if}\quad t^{c,m}_i < t\leq t_i^{e,m}\\
           \end{array}, \right. 
\end{equation}
where $t^{c,m}_i$ is the time that the control input changes sign, and $\dot{\lambda_i^{v{^\ast}}}(t)=-\lambda_i^{p{^\ast}} = -a_i >0$. Evaluating the Hamiltonian along the optimal control at $t^{c,m}_i$ yields 
\begin{align}\label{hamil-tc}
&H_i(t^{c,m}_i,p_i^\ast(t^{c,m}_i),v_i^\ast(t^{c,m}_i),u_i^\ast(t^{c,m}_i),\lambda_i^{\ast}(t^{c,m}_i))\nonumber\\&=1+\lambda_i^{p{^\ast}} v_i^\ast (t^{c,m}_i).
\end{align}
Since the final time $t = t^{e,m}$ is not specified, the transversality condition gives
\begin{equation}\label{hamil-tf}
    H_i(t^{e,m}_i,p_i^\ast(t^{e,m}_i),v_i^\ast(t^{e,m}_i),u_i^\ast(t^{e,m}_i),\lambda_i^{\ast}(t^{e,m}_i))=0.
\end{equation}
Additionally, the Hamiltonian \eqref{hamil1} must be constant along the optimal solution, since it is not an explicit function of time
\begin{equation}\label{hamil-tf=tc}
    1+\lambda_i^{p{^\ast}} v_i^\ast (t^{c,m}_i)=0.
\end{equation}
Hence, $v_i^\ast (t^{c,m}_i) = -\dfrac{1}{\lambda_i^{p{^\ast}}}>0$. 

Case 4 : Similarly to Case 3, it can be shown that $\lambda_i^{p{^\ast}}=a_i>0$ . Solving \eqref{hamil-tf=tc} for $v_i^\ast (t^{c,m}_i)$, we have $v_i^\ast (t^{c,m}_i) = -\dfrac{1}{\lambda_i^{p{^\ast}}}<0$.
Hence, this case cannot be a feasible solution.\qed

\section{Proof of Theorem \ref{theorem-endtime-intermediateTime-con}}\label{B}

Let CAV $i\in \mathcal{N}(t)$ enter and exit the zone $m\in\mathcal{I}_i$ at $t_i^s$ and $t_i^e$ respectively. From the boundary conditions, we have $p_i(t_i^s) =p_i^s$, $v_i(t_i^s) =v_i^s$, $p_i(t_i^e) =p_i^e$ and $v_i(t_i^e) =v_i^e$. 
CAV $i$ cruises with $u_i(t)=u_{{\max}}$, and then it enters the constrained arc $v_i(t) = v_{\max}$ at time $\tau_1$. It stays at the constrained arc with $u_i(t)= 0$ until time $\tau_2$. After exiting the constrained arc, it decelerates with $u_i(t)=u_{{\min}}$. Substituting the optimal control input in \eqref{27a} yields the following optimal state equations: 
\begin{align}
p_i^{\ast}(t)=&\frac{1}{2}u_{{\max}}(t^2-{t_i^s}^2)-u_{{\max}}t_i^s(t-t_i^s)+v_i^s(t-t_i^s)+p_i^s, \nonumber \\
v_i^{\ast}(t)=&u_{{\max}}(t-t_i^s)+v_i^s ,\quad \forall~ t\in [t_i^s,\tau_1^-].\\
p_i^{\ast}(t)=& v_{{\max}}(t-\tau_1)+p_i^{\ast}(\tau_1^+),\nonumber
\\
v_i^{\ast}(t)=&v_{{\max}}, \quad \forall~ t\in [\tau_1^+,\tau_2^-].\\
p_i^{\ast}(t)=&\frac{1}{2}u_{{\min}}(t^2-\tau_2^2)-u_{{\min}}\tau_2(t-\tau_2)\nonumber \\
&+v_i^{\ast}(\tau_2^+)(t-\tau_2)+p_i^{\ast}(\tau_2^+), \nonumber
\\
v_i^{\ast}(t)=&u_{{\min}}(t-\tau_2)+v_i^{\ast}(\tau_2^+),\quad \forall~ t\in [\tau_2^+,t_i^e].
\end{align}
The states of CAV are continuous at $\tau_1$ and $\tau_2$, thus
\begin{align}
    p_i^{\ast}(\tau_1^-) &= p_i^{\ast}(\tau_1^+) ,
    &v_i^{\ast}(\tau_1^-) &= v_i^{\ast}(\tau_1^+),\label{cons-conditions2}\\
    p_i^{\ast}(\tau_2^-) &= p_i^{\ast}(\tau_2^+),
    &v_i^{\ast}(\tau_2^-) &= v_i^{\ast}(\tau_2^+)\label{cons-conditions3}.
\end{align}
From \eqref{cons-conditions2}-\eqref{cons-conditions3} and the boundary conditions, piecing the unconstrained and constrained arcs together, we have
\begin{align}
    &p_i^{\ast}(\tau_1)=\frac{1}{2}u_{{\max}}(\tau_1^2-t_i^s)-u_{{\max}}t_i^s(\tau_1-t_i^s) \nonumber\\
    &+v_i^s(\tau_1-t_i^s)+p_i^s,\\
    &\tau_1 = \frac{v_{{\max}}-v_i^s}{u_{{\max}}}+t_i^s ,\\
    &p_i^{\ast}(\tau_2)=v_{{\max}}(\tau_2-\tau_1)+p_i^{\ast}(\tau_1),\\
    &v_i^{\ast}(\tau_2) = v_{{\max}},\\
    &p_i^e=\frac{1}{2}u_{{\min}}({t_i^e}^2-\tau_2^2)-u_{{\min}}\tau_2({t_i^e}-\tau_2),\nonumber \\
&+v_i^{\ast}(\tau_2)({t_i^e}-\tau_2)+p_i^{\ast}(\tau_2),\\
&v_i^e=u_{{\min}}({t_i^e}-\tau_2)+v_i^{\ast}(\tau_2).
\end{align}
Solving the system of equations above yields \eqref{processtimeEqCons}. \qed

\section{Proof of Lemma \ref{lemma-notactivatingRearEnd}}\label{D}       

Since CAV $i$ and $k$ cruise on the same lane, \eqref{Schedulecons2} simplifies to $T_i^{^{m}}\geq T_k^{^{m}} + h$. For CAV $k$ we have
\begin{align}
        &p_k(t) < p_k(t^\prime), \quad \forall~ t<t^\prime \in\mathbb{R}^{+},\\
&\inf(p_k(T_i^{^{m}})) = p_k(T_k^{^{m}} + h)   ,\quad  T_i^{^{m}}\geq T_k^{^{m}} + h.
\end{align}
Evaluating the \eqref{RearEndCons} at time $T_i^{^{m}}$ yields 
\begin{equation}\label{rearendsafety-seteval}
    (p_k(T_i^{^{m}})-p_k(T_k^{^{m}}))-(p_i(T_i^{^{m}})-p_i(T_i^{^{m}})) \geq \gamma_i + \varphi v_i(T_i^{^{m}}).
\end{equation}
Then, we have
\begin{equation}\label{rearendSet1eq}
    (p_k(T_i^{^{m}})-p_k(T_k^{^{m}})) \geq \gamma_i + \varphi v_i(T_i^{^{m}}),
\end{equation}
\begin{equation}
    (p_k(T_i^{^{m}})-p_k(T_k^{^{m}})) \geq \inf( (p_k(T_i^{^{m}})-p_k(T_k^{^{m}}))),
\end{equation}
where
\begin{equation}
    \inf( (p_k(T_i^{^{m}})-p_k(T_k^{^{m}}))) = p_k(T_k^{^{m}} + h) - p_k(T_k^{^{m}}).
\end{equation}

If 
\begin{equation}\label{rearendSet2eq}
    p_k(T_k^{^{m}} + h)-p_k(T_k^{^{m}}) > \gamma_i + \varphi v_i(T_i^{^{m}})
\end{equation}
holds, then \eqref{rearendSet1eq} also holds, and the rear-end safety constraint never becomes active at $t=T_i^{^{m}}$. 

The LHS of \eqref{rearendSet2eq} corresponds to the distance that CAV $k$ travelled after $h$ seconds from its entry in the zone $m$, denoted by $\Delta_k^{m}(h,u_i(t))$. Thus, 
\begin{equation}\label{argmin}
    \operatorname*{argmin}_{u_i(t)} \Delta_k^{m}(h,u_i(t))= u_{\min}.
\end{equation}
Substituting \eqref{argmin} and $t = h$ into \eqref{27a}, we have 
\begin{equation}\label{deltamin}
    \Delta_k^m = \frac{1}{2}u_{\min}h^2+v_i(T_k^{^{m}})h.
\end{equation}
If \eqref{deltamin} is greater than $\gamma_i + \varphi v_i(T_i^{^{m}})$, it yields \eqref{rearendSet2eq}, and the proof is complete. \qed

%%%%%%%%%%%%%%

\end{appendices}

%%%%Bibiliography%%%%
\bibliographystyle{IEEEtran} 
\bibliography{bib/IDS_Publications_10152021.bib, bib/ref.bib}

%%%%Bio%%%%

\begin{IEEEbiography}
	[{\includegraphics[width=1.1in,height=1.25in,clip,keepaspectratio]{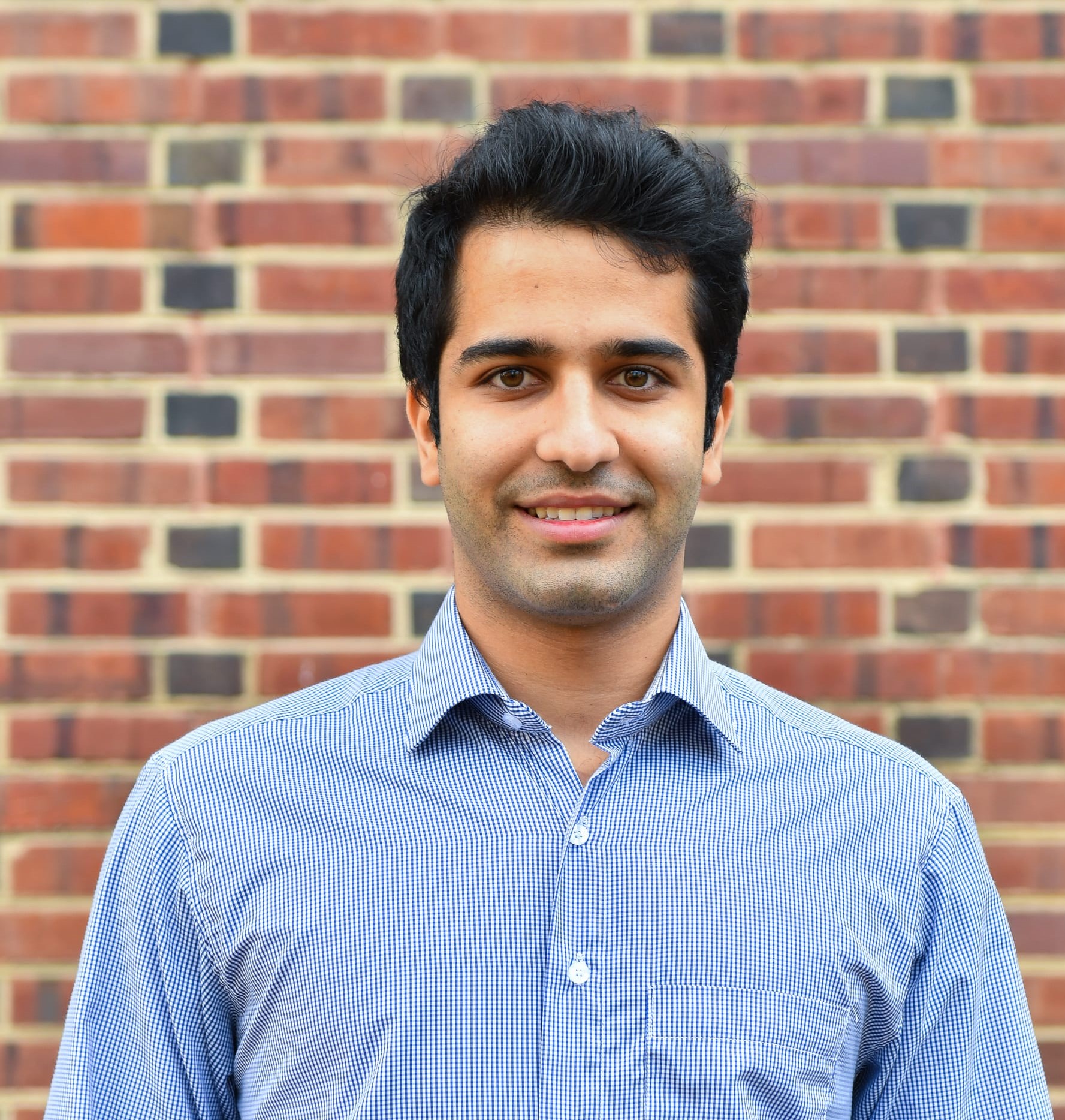}}]
	{Behdad Chalaki} (S’17) received the B.S. degree in mechanical engineering from the University of Tehran, Iran in 2017. He is currently a Ph.D. student in the Information and Decision Science Laboratory in the Department of mechanical engineering at the University of Delaware. His primary research interests lie at the intersections of decentralized optimal control, statistics, and machine learning, with an emphasis on transportation networks. In particular, he is motivated by problems related to improving traffic efficiency and safety in smart cities using optimization techniques. He is a student member of IEEE, SIAM, ASME, and AAAS.
\end{IEEEbiography}
\begin{IEEEbiography}[{\includegraphics[width=1.1in,height=1.25in,clip,keepaspectratio]{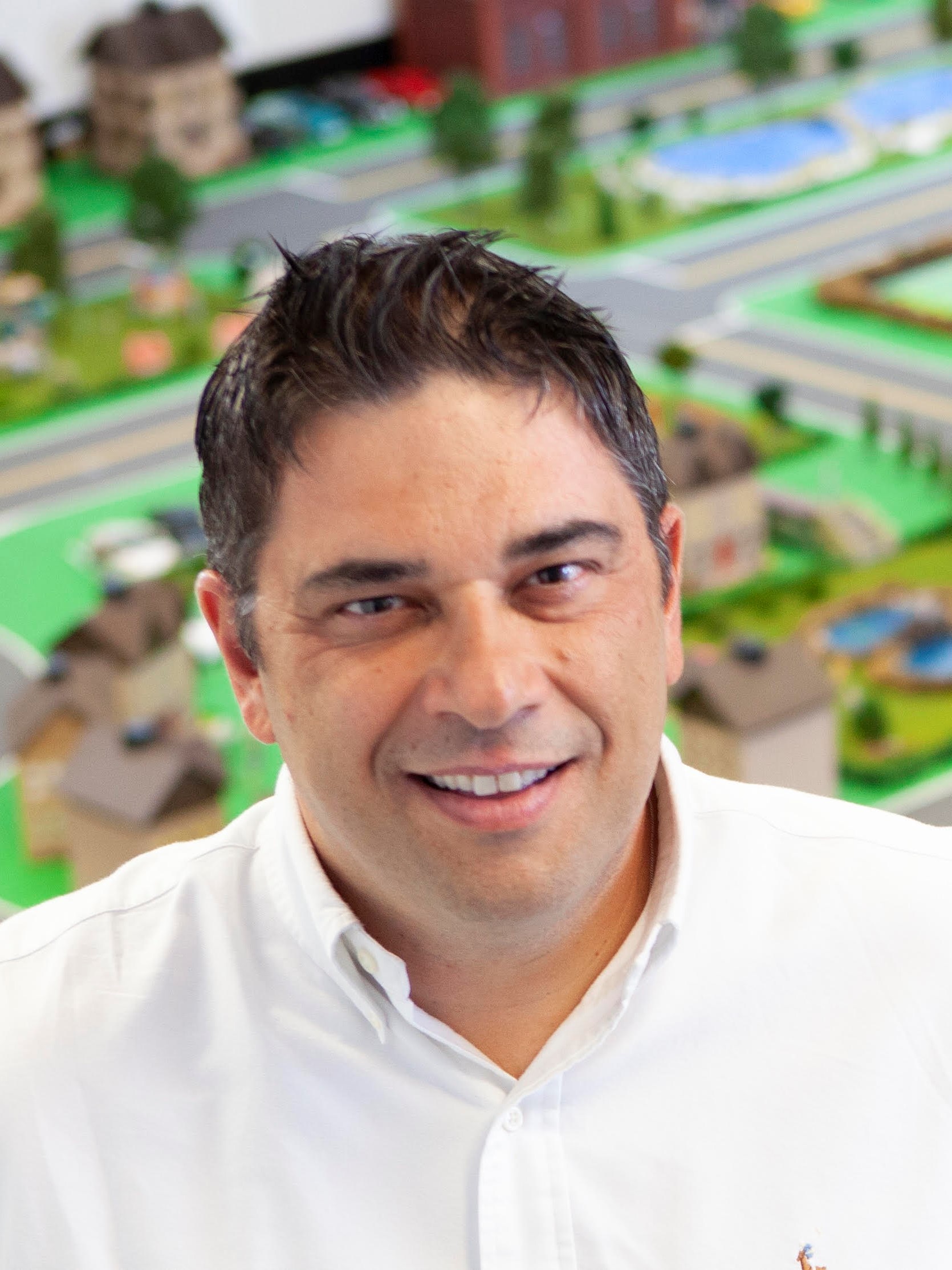}}]{Andreas A. Malikopoulos}
	(S’06–M’09–SM’17)  received the Diploma in mechanical engineering from the National Technical University of Athens, Greece, in 2000. He received M.S. and Ph.D. degrees from the department of mechanical engineering at the University of Michigan, Ann Arbor, Michigan, USA, in 2004 and 2008, respectively. 
	He is the Terri Connor Kelly and John Kelly Career Development Associate Professor in the Department of Mechanical Engineering at the University of Delaware (UD), the Director of the Information and Decision Science (IDS) Laboratory, and the Director of the Sociotechnical Systems Center. Before he joined UD, he was the Deputy Director and the Lead of the Sustainable Mobility Theme of the Urban Dynamics Institute at Oak Ridge National Laboratory, and a Senior Researcher with General Motors Global Research \& Development. His research spans several fields, including analysis, optimization, and control of cyber-physical systems; decentralized systems; stochastic scheduling and resource allocation problems; and learning in complex systems. The emphasis is on applications related to smart cities, emerging mobility systems, and sociotechnical systems. He has been an Associate Editor of the IEEE Transactions on Intelligent Vehicles and IEEE Transactions on Intelligent Transportation Systems from 2017 through 2020. He is currently an Associate Editor of Automatica and IEEE Transactions on Automatic Control. He is a member of SIAM, AAAS, and a Fellow of the ASME.
\end{IEEEbiography}

\end{document}